\newsavebox{\@brx}
\newcommand{\llangle}[1][]{\savebox{\@brx}{\(\m@th{#1\langle}\)}%
  \mathopen{\copy\@brx\kern-0.5\wd\@brx\usebox{\@brx}}}
\newcommand{\rrangle}[1][]{\savebox{\@brx}{\(\m@th{#1\rangle}\)}%
  \mathclose{\copy\@brx\kern-0.5\wd\@brx\usebox{\@brx}}}
\begin{document}
\def\e#1\e{\begin{equation}#1\end{equation}}
\def\ea#1\ea{\begin{align}#1\end{align}}
\def\eq#1{{\rm(\ref{#1})}}
\theoremstyle{plain}
\newtheorem{thm}{Theorem}[section]
\newtheorem{lem}[thm]{Lemma}
\newtheorem{prop}[thm]{Proposition}
\newtheorem{cor}[thm]{Corollary}
\theoremstyle{definition}
\newtheorem{dfn}[thm]{Definition}
\newtheorem{ex}[thm]{Example}
\newtheorem{rem}[thm]{Remark}
\newtheorem{conjecture}[thm]{Conjecture}
\newtheorem{convention}[thm]{Convention}

\newcommand{\D}{\mathrm{d}}
\newcommand{\A}{\mathcal{A}}
\newcommand{\LL}{\llangle[\Big]}
\newcommand{\RR}{\rrangle[\Big]}
\newcommand{\LD}{\Big\langle}
\newcommand{\RD}{\Big\rangle}
\newcommand{\F}{\mathcal{F}}
\newcommand{\HH}{\mathcal{H}}
\newcommand{\X}{\mathcal{X}}
\newcommand{\PP}{\mathbb{P}}
\newcommand{\K}{\mathscr{K}}
\newcommand{\q}{\mathbf{q}}

\newcommand{\op}{\operatorname}
\newcommand{\C}{\mathbb{C}}
\newcommand{\N}{\mathbb{N}}
\newcommand{\R}{\mathbb{R}}
\newcommand{\Q}{\mathbb{Q}}
\newcommand{\Z}{\mathbb{Z}}
\renewcommand{\H}{\mathbf{H}}

\newcommand{\Etau}{\text{E}_\tau}
\newcommand{\E}{{\mathcal E}}
\newcommand{\G}{\mathbf{G}}
\newcommand{\eps}{\epsilon}
\newcommand{\g}{\mathbf{g}}
\newcommand{\im}{\op{im}}

\newcommand{\h}{\mathbf{h}}

\newcommand{\Gmax}[1]{G_{#1}}
\newcommand{\AW}{E}
\providecommand{\abs}[1]{\left\lvert#1\right\rvert}
\providecommand{\norm}[1]{\left\lVert#1\right\rVert}
\newcommand{\abracket}[1]{\left\langle#1\right\rangle}
\newcommand{\bbracket}[1]{\left[#1\right]}
\newcommand{\fbracket}[1]{\left\{#1\right\}}
\newcommand{\bracket}[1]{\left(#1\right)}
\newcommand{\ket}[1]{|#1\rangle}
\newcommand{\bra}[1]{\langle#1|}

\newcommand{\ora}[1]{\overrightarrow#1}

\providecommand{\from}{\leftarrow}
\newcommand{\bl}{\textbf}
\newcommand{\mbf}{\mathbf}
\newcommand{\mbb}{\mathbb}
\newcommand{\mf}{\mathfrak}
\newcommand{\mc}{\mathcal}
\newcommand{\cinfty}{C^{\infty}}
\newcommand{\pa}{\partial}
\newcommand{\prm}{\prime}
\newcommand{\dbar}{\bar\pa}
\newcommand{\OO}{{\mathcal O}}
\newcommand{\hotimes}{\hat\otimes}
\newcommand{\BV}{Batalin-Vilkovisky }
\newcommand{\CE}{Chevalley-Eilenberg }
\newcommand{\suml}{\sum\limits}
\newcommand{\prodl}{\prod\limits}
\newcommand{\into}{\hookrightarrow}
\newcommand{\Ol}{\mathcal O_{loc}}
\newcommand{\mD}{{\mathcal D}}
\newcommand{\iso}{\cong}
\newcommand{\dpa}[1]{{\pa\over \pa #1}}
\newcommand{\Kahler}{K\"{a}hler }
\newcommand{\0}{\mathbf{0}}

\newcommand{\B}{\mathcal{B}}
\newcommand{\V}{\mathcal{V}}

\newcommand{\M}{\mathfrak{M}}

\renewcommand{\Im}{\op{Im}}
\renewcommand{\Re}{\op{Re}}

\newcommand{\DD}{\Omega^{\text{\Romannum{2}}}}


\numberwithin{equation}{section}

\newcommand{\comment}[1]{\textcolor{red}{[#1]}} 

\makeatletter
\newcommand{\subjclass}[2][2010]{%
  \let\@oldtitle\@title%
  \gdef\@title{\@oldtitle\footnotetext{#1 \emph{Mathematics Subject Classification.} #2}}%
}
\newcommand{\keywords}[1]{%
  \let\@@oldtitle\@title%
  \gdef\@title{\@@oldtitle\footnotetext{\emph{Key words and phrases.} #1.}}%
}
\makeatother

\makeatletter
\let\orig@afterheading\@afterheading
\def\@afterheading{%
   \@afterindenttrue
  \orig@afterheading}
\makeatother

\title{\bf Genus Two Quasi-Siegel Modular Forms and Gromov-Witten Theory of Toric Calabi-Yau Threefolds}
\author{Yongbin Ruan, Yingchun Zhang and Jie Zhou}
\date{}
\maketitle

\begin{abstract}

We first develop theories of differential rings of quasi-Siegel modular and quasi-Siegel Jacobi forms for genus two.
Then we apply them to the Eynard-Orantin topological recursion  of certain local Calabi-Yau threefolds equipped with
branes,
whose mirror curves are genus-two hyperelliptic curves.
By the proof of the Remodeling Conjecture, we prove that the corresponding open- and closed-
Gromov-Witten potentials are essentially quasi-Siegel Jacobi and quasi-Siegel modular forms for genus two, respectively.
 \end{abstract}

\setcounter{tocdepth}{2} \tableofcontents

\section{Introduction}

       One of the most exciting aspects of Gromov-Witten (GW) theory is its interaction with the theory of modular forms.
 Now it is well accepted from
 the B-model of mirror symmetry that the generating functions of GW theory carry certain non-holomorphic signature in the sense
 that they should be viewed as certain holomorphic limits of non-holomorphic generating functions. It is amazing that it coincides with the  non-holomorphic signature
  in the theory of the so-called quasi-modular forms.
 We should mention that there is similar and yet different kind of non-holomorphic signature in
 mock modular forms which made its appearance recently in quantum K-theory.

 There is a great deal of literature discussing
 this connection between GW theory and quasi-modular forms
since the works \cite{Dijkgraaf:1995, Kaneko:1995, Okunkov:2002} discussing the elliptic curve case.
    Despite of many surprising progresses that have been made\footnote{We refer the interested readers to the
    introduction part of \cite{Shen:2014} for an incomplete list of them.}, so far most of the results on quasi-modularity
  are established only for the cases where the dimension of the moduli space is one.
  One of the main difficulties in generalizing the quasi-modularity investigations to higher dimensional moduli spaces appearing as
  Hermitian symmetric spaces is that  it is not clear
what the appropriate notion of
 "quasi"
 is (see an early work \cite{Klemm:2015direct}).\\

The purpose of this work is two-fold.
 In the first half of this paper, we partially fill this gap for
 the case of the moduli space of genus two Riemann surfaces by building a theory of differential ring of quasi-Siegel modular forms.
One of the main results is that the differential ring is finitely generated by explicit generators.
Our technique is a combination of various perspectives of periods and quasi-periods
such as those in complex differential geometry, differential equations, representation theory and Hodge theory, and  is based on earlier results from
\cite{Igusa:1960arithmetic, Igusa:1962siegel, Igusa:1967modular, Bertrand:2000transcendence, VanderGeer:2008, Urban:2014nearly}.
The second half of the paper is devoted to the application of quasi-modular forms to the GW theories of toric Calabi-Yau (CY) 3-folds whose B-model mirror theories are governed by
Eynard-Orantin \cite{Eynard:2007invariants} topological recursion on genus two mirror curves,
thereby extending the earlier results in  \cite{Fang:2018open} from toric CYs with genus one mirror curves to those with genus two ones.

 \subsection{Differential ring of quasi-Siegel modular forms for genus two}

To state our results, we first need to recall some notations. Let
\begin{equation}
\mathcal{H}=\{\tau\in M_{2}(\mathbb{C}) \,|\, \mathrm{Im}\,\tau>0\,, \tau^{t}=\tau\}
\end{equation}
be the Siegel upper-half space of genus two.
Let $\Gamma$ be a congruence subgroup of the full Siegel modular group $\Gamma(1)=\mathrm{Sp}_{4}(\mathbb{Z})$
that acts on $\mathcal{H}$ by
\begin{equation}\label{eqntautransform}
\tau\mapsto \gamma \tau=(a\tau+b)(c\tau+d)^{-1}
\,,\quad
\forall\,
\gamma=
\begin{pmatrix}
a & b\\
c & d
\end{pmatrix}\in \Gamma\,.
\end{equation}
Here to avoid technicality we assume that it has no torsion, later this assumption will be lifted.
Let $S=\Gamma\backslash\mathcal{H}$ be the corresponding Siegel modular variety
which by Baily-Borel \cite{Baily:1966} admits the structure of a quasi-projective algebraic variety, and
$\pi:\mathcal{Z}\rightarrow S$ be the universal family of Abelian varieties with level structure determined by $\Gamma$.
Define the following locally free sheaves (or vector bundles) on $S$
\begin{equation}
\underline{\omega}=R^{0}\pi_{*}\Omega^{1}_{\mathcal{Z}|S}\,,
\quad
\mathcal{V}_{S}=R^{1}\pi_{*}\mathbb{C}_{\mathcal{Z}}\otimes \mathcal{O}_{S}\,,
\end{equation}
where $\Omega^{1}_{\mathcal{Z}|S}$ is the sheaf of relative differentials.

Generalizing the discussions in \cite{VanderGeer:2008, Urban:2014nearly},
we arrive at a sheaf-theoretic definition of quasi-modular forms.
\begin{dfn}[Definition \ref{dfnalgebraicdefinition}]
Let the notations be as above.
A weakly holomorphic\footnote{Here we are following the terminology used for
the genus one case (see e.g.
\cite{Zagier:2008}) in which "weak holomorphicity" stands for holomorphicity away from $\bar{S}-S$ where $\bar{S}$
is a certain compactification of $S$.} quasi-modular form
of weight $k$ and order $m\geq 0$ for the modular group $\Gamma$ is a holomorphic section of
$\mathrm{Sym}^{\otimes (k-m)}\underline{\omega}\otimes \mathrm{Sym}^{\otimes m} \mathcal{V}_{S}$.
\end{dfn}

By working with a particular frame adapted to the Hodge filtration on the pull-back sheaves
along $\mathcal{H}\rightarrow S$, global sections
of $\mathrm{Sym}^{\otimes (k-m)}\underline{\omega}\otimes \mathrm{Sym}^{\otimes m} \mathcal{V}_{S}$
can be described in terms of the coordinates of the corresponding pull-back sections.
As explained in \cite{Urban:2014nearly}, this particular frame is naturally motivated from the monodromy weight filtration
around a singular fiber in the family $\pi:\mathcal{Z}\rightarrow S$.
This then yields
an equivalent analytic definition of quasi-modular forms in the spirit of \cite{Kaneko:1995}.

\begin{dfn}[Definition \ref{dfnanalyticdefinition}]
Let the notations be as above.
A weakly holomorphic quasi-modular form of weight $k$ and order $m$ for the modular group $\Gamma$ is a collection of holomorphic functions $\{f_{i}(\tau)\}_{i,=0,1\cdots,m}$ on $\mathcal{H}$ which
 satisfy the transformation law
\begin{equation*}
f_{i}(\tau)=
\sum_{\ell= i}^{m}
{\ell\choose i} f_{\ell}(\gamma\tau)  ((c \tau+d)^{-1})^{\otimes (k-\ell)} (- c^t)^{\otimes (\ell-i)} (( c\tau+d)^t )^{\otimes i}\,,
\quad
\forall \gamma\in \Gamma\,.
\end{equation*}
\end{dfn}

In particular, weakly holomorphic
quasi-modular forms with $m=0$ are weakly holomorphic, vector-valued modular forms
corresponding to global sections of the sheaves $\mathrm{Sym}^{\bullet\geq 0}\underline{\omega}:
=\{ \mathrm{Sym}^{\otimes k}\underline{\omega}\}_{k\geq 0}$.\\

Using the further structure of $\mathcal{H}=\mathrm{Sp}_{4}(\mathbb{R})/\mathbb{U}_{2}$
as a Hermitian symmetric domain, the bundles above can be regarded as homogeneous vector bundles
defined by representations of $\mathbb{U}_{2}$.
The bundle $\underline{\omega}$ corresponds to the fundamental representation, while $\mathcal{V}_{S}$ the trivial representation.
Moreover, one has the following elementary fact
\begin{equation}
\underline{\omega}^{\otimes 2}=\mathrm{Sym}^{\otimes 2}\underline{\omega}
\oplus \wedge^{2} \underline{\omega}\,,
\quad
\wedge^{2} \underline{\omega}=\det \underline{\omega}\,.
\end{equation}
This then tells that meromorphic
scalar-valued modular forms correspond to global sections of the sheaves $\det^{\bullet} \underline{\omega}$, which are studied thoroughly by Igusa in
\cite{Igusa:1960arithmetic, Igusa:1962siegel, Igusa:1964siegel, Igusa:1967modular}.

By the Weyl character formula, any holomorphic representation
of $\mathbb{U}_{2}$
decomposes into a direct sum whose summands lie in $\mathrm{Sym}^{\bullet}\underline{\omega},\det^{\bullet} \underline{\omega}$.
Therefore, the set of representations
$\{ \det^{\bullet}{\underline{\omega}} \otimes \mathrm{Sym}^{\bullet}\underline{\omega},\oplus,\otimes \}$ carries a module structure
over $\{\det^{\bullet}{\underline{\omega}},\oplus,\otimes\}$. See \cite{VanderGeer:2008} for related discussions.
Hence in addition to a direct examination by using the analytic definition,
the representation-theoretic description leads to another way of showing the
ring structure of meromorphic vector-valued modular forms graded by the representations
$\mathrm{Sym}^{\bullet}\underline{\omega}, \det^{\bullet}{\underline{\omega}}$.
Furthermore, since $\mathcal{V}_{S}$ corresponds to the trivial representation,
similar module structure holds for
$\{ \det^{\bullet}{\underline{\omega}} \otimes \mathrm{Sym}^{\bullet}\underline{\omega}
\otimes \mathrm{Sym}^{\bullet \geq 0} \mathcal{V}_{S}\}$.\\

For practical reasons, in this paper we shall work analytically with a collection of subspaces of
$H^{0}(S,  \det^{\bullet}{\underline{\omega}} \otimes
\mathrm{Sym}^{\bullet}\underline{\omega}
\otimes \mathrm{Sym}^{\bullet \geq 0} \mathcal{V}_{S}) $ given by\footnote{Here $H^{0}(X,-)$ denotes the global section functor.}
\begin{equation}
 H^{0}( S, \mathrm{det}^{\bullet}\underline{\omega})
\otimes
\mathrm{Sym}^{\bullet\geq 0} H^{0}(S, \underline{\omega})\otimes
 \mathrm{Sym}^{\bullet \geq 0} H^{0}(S,\mathcal{V}_{S})\,.
\end{equation}
\begin{dfn}[Definition \ref{defringofmodularforms}, Definition \ref{defringofquasimodularforms}]\label{dfnsubring}
Let $R(\Gamma)$ be the fractional ring of the graded ring of scalar-valued modular forms for $\Gamma$.
Define a subring of vector-valued modular forms to be the $R(\Gamma)$-module given by
\begin{equation}
\mathcal{R}(\Gamma)=R(\Gamma)[ \mathrm{Sym}^{\bullet\geq 0} H^{0}(S, \underline{\omega})]\,.
\end{equation}
Define a subring of quasi-modular forms to be the $\mathcal{R}(\Gamma)$-module given by
\begin{equation}
\widetilde{\mathcal{R}}(\Gamma)
=
R(\Gamma) [\mathrm{Sym}^{\bullet\geq 0} H^{0}(S, \underline{\omega}),
 \mathrm{Sym}^{\bullet \geq 0} H^{0}(S,\mathcal{V}_{S})
]\,.
\end{equation}
\end{dfn}
\begin{convention}
Hereafter unless stated otherwise, by modular forms we mean elements in $\mathcal{R}(\Gamma)$
and by quasi-modular forms elements in $\widetilde{\mathcal{R}}(\Gamma)$.
\end{convention}

The sheaf-theoretic origin of Definition \ref{dfnsubring}
above not only gives the ring structure by construction, but also allows to describe the differential structure in terms of very concrete geometric terms.
We now briefly explain this.
Denote the rational functional field of the quasi-projective algebraic variety $S$ by $k(S)$.
Up to algebraic extension, $k(S)$ is the same as
the rational functional field of $\Gamma(1)\backslash \mathcal{H}$
which admits very explicit presentation via scalar-valued Siegel modular forms
\cite{Igusa:1962siegel, Igusa:1964siegel}, as shall be reviewed in Section
\ref{secfamiliesofgenustwocurves} below.
We take a set of three algebraically independent generators $\{t_1,t_2,t_3\}$ from $k(S)$ and denote them
 collectively by $t$.
 Hence up to algebraic extension $k(S)$ coincides with $k(t_1,t_2,t_3)$.
Denote
the differentials $\{\partial_{\tau_{ij}}, i,j,=1,2\}$ collectively by $\partial_{\tau}$
and similarly for $\partial_{t}$.
Taking a weak Torelli marking $\{A,B\}$ and a frame $\omega,\eta$ adapted to the Hodge filtration of
 the universal family
$\pi:\mathcal{Z}\rightarrow S$ of Abelian varieties.
Let $\Pi_{A}(\omega), \Pi_{A}(\eta)$ be the $A$-cycle periods and quasi-periods.

\begin{thm}[Theorem \ref{thmdifferentialringofquasimodularforms}\cite{Bertrand:2000transcendence}, Theorem \ref{thmdifferentialGaloisfield}\cite{Bertrand:2000transcendence},
 Corollary \ref{cordifferentialringofquasimodularforms}, Corollary \ref{corBergmankerneldegreezeroterm}]
\label{thmmainthm1}

Let the notations be as above.
Let $\mathcal{D}$ be the differential ring obtained by  adjoining  to $k(S)$ all of the
$\partial_{\tau}^{n},n\geq 0$ derivatives  of rational functions in $k(S)$.

\begin{enumerate}
\item
The differential ring  $\mathcal{D}$
is also stable under $\partial_{t}$.
The fractional field of $\mathcal{D}$ has transcendental degree $10$ over $\mathbb{C}$.

\item
Up to algebraic extension,
the differential ring $\mathcal{D}$ is generated over $k(S)$ by the entries of
\begin{eqnarray}
\partial_{\tau}t \,,\quad \partial_{\tau}^{2}t\,.
\end{eqnarray}

Elements in $k(S)[\partial_{\tau}t]$ are modular forms for $\Gamma$
while $\partial_{\tau}^{2}t$ quasi-modular.

\item
Up to algebraic extension, the differential ring $\mathcal{D}$ is identical to the ring of quasi-modular forms:
\begin{equation}
\mathcal{D}=\widetilde{\mathcal{R}}(\Gamma)\,,
\quad
\textrm{up to algebraic extension}\,.
\end{equation}

Let $\Pi_{A}(\omega)$ and $\Pi_{A}(\eta)$
be the periods and quasi-periods associated to the family
$\pi:\mathcal{Z}\rightarrow S$, respectively.
The
generators of $\widetilde{\mathcal{R}}(\Gamma)$ over $k(S)$ can be taken to be the entries of
\begin{eqnarray}
\Pi_{A}(\omega) \,,\quad \Pi_{A}(\eta)\,,
\end{eqnarray}
 subject to the only relation given by the 1st Riemann-Hodge bilinear relation
\begin{equation}
\Pi_{A}(\omega) \Pi_{A}(\eta)^{t}
=
\Pi_{A}(\eta) \Pi_{A}(\omega)^{t}
\,.
\end{equation}

Elements in $k(S)[\Pi_{A}(\omega)]$ are modular forms for $\Gamma$,
while $\Pi_{A}(\eta)$ is quasi-modular with the obstruction of modularity given by the 2nd Riemann-Hodge bilinear relation/Legendre period relation.
\item
Up to algebraic extension,
the differential ring $\mathcal{D}$ is generated over $k(S)$ by
\begin{eqnarray}
\partial_{\tau}\log f\,,\quad [z^{0}]\partial^2_{z}\log \vartheta_{\delta}\,,
\end{eqnarray}
where $ f$ is any nonzero element in $ k(S)$,
$\vartheta_{\delta}$ is any Jacobi theta function with an odd characteristic $\delta$
defined on dimension two principally polarized Abelian varieties, and  $[z^{0}]\partial^2_{z}\log \vartheta_{\delta}$
stands for the degree zero term in the Laurent expansion of  $\partial^2_{z}\log \vartheta_{\delta}$.
Elements in $k(S)[\partial_{\tau}\log f]$ are  modular forms for $\Gamma$,
while $[z^{0}]\partial^2_{z}\log \vartheta_{\delta}$ is quasi-modular.

\end{enumerate}

\end{thm}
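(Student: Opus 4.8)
The plan is to assemble the four assertions from three inputs that combine at the level of differential fields: (a) Bertrand's results \cite{Bertrand:2000transcendence} on the periods and quasi-periods of the universal abelian surface $\pi:\mathcal{Z}\to S$ and on the differential Galois group of the associated Picard--Fuchs system, which control the size and the generators of $\mathcal{D}$; (b) the explicit automorphy of the period and quasi-period matrices under $\Gamma$, which matches these generators with modular, resp.\ quasi-modular, forms in the sense of the analytic transformation law above; and (c) the classical description of periods and quasi-periods via theta constants, logarithmic derivatives of modular forms, and the Bergman kernel. Every statement is ``up to algebraic extension'': one first replaces $k(S)$ by the finite extension over which $t=\{t_1,t_2,t_3\}$ is an honest \'etale coordinate system and the algebraic frame $\omega,\eta$ adapted to the Hodge filtration is defined, and checks that this enlarges $\mathcal{D}$ at most by a finite algebraic extension.

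For part (1), the $\partial_{t}$-stability is the chain rule: the Jacobian $J=\partial_{\tau}t$ has entries in $\mathcal{D}$ (first $\partial_{\tau}$-derivatives of elements of $k(S)$), one has $\partial_{\tau}=J\,\partial_{t}$, and $J$ is invertible over the extension where $t$ is a coordinate, so $\partial_{t}=J^{-1}\partial_{\tau}$ has entries that are polynomials in the entries of $J$ over $\det J$; hence $\partial_{t}$ preserves $\mathcal{D}$. For the transcendence degree, Bertrand's identification of the differential Galois group (a form of $\mathrm{Sp}_{4}$, of dimension $10$) shows that, up to algebraic extension, the fraction field of $\mathcal{D}$ is generated over $k(S)$ by the eight entries of the two $2\times 2$ matrices $\Pi_{A}(\omega),\Pi_{A}(\eta)$ subject only to the symmetry of $\Pi_{A}(\omega)\Pi_{A}(\eta)^{t}$; with $\dim S=3$ this gives $3+8-1=10$. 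For part (2), the bundle $\mathcal{V}_{S}$ has rank $4$ and a two-step Hodge filtration $\underline{\omega}\subset\mathcal{V}_{S}$ with $\mathcal{V}_{S}/\underline{\omega}\cong\underline{\omega}^{\vee}$, so the Gauss--Manin connection yields a Picard--Fuchs system whose solution space is spanned by the periods of the frame $(\omega,\eta)$; consequently every $\partial_{\tau}^{n}t$ with $n\geq 3$ is a $k(S)$-rational combination of the entries of $\partial_{\tau}t$ and $\partial_{\tau}^{2}t$, i.e.\ $\mathcal{D}=k(S)[\partial_{\tau}t,\partial_{\tau}^{2}t]$. A single $\partial_{\tau}$ sends a weight-zero modular function to a weight-two meromorphic modular form, so $k(S)[\partial_{\tau}t]$ consists of modular forms, while a second $\partial_{\tau}$ introduces the anomalous term and $\partial_{\tau}^{2}t$ is genuinely quasi-modular.

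For part (3), I would take $\omega$ to be the frame of $\underline{\omega}$ dual to the coordinates $t$ and $\eta$ a lift to $\mathcal{V}_{S}$ of the dual frame of $\mathcal{V}_{S}/\underline{\omega}$; then the Gauss--Manin connection matrix in the frame $(\omega,\eta)$ expresses $\partial_{\tau}t$ through $\Pi_{A}(\omega)$ and $\partial_{\tau}^{2}t$ through $\Pi_{A}(\eta)$ modulo $k(S)$-rational combinations, so by part (2) the generating sets $\{\partial_{\tau}t,\partial_{\tau}^{2}t\}$ and $\{\Pi_{A}(\omega),\Pi_{A}(\eta)\}$ agree up to algebraic extension. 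One then reads off the automorphy from the weak Torelli marking and $\tau\mapsto\gamma\tau$: $\Pi_{A}(\omega)$ satisfies the $m=0$, $k=1$ law (the factor $(c\tau+d)^{-1}$, up to the orientation of the frame), so its entries are vector-valued modular forms and $k(S)[\Pi_{A}(\omega)]=\mathcal{R}(\Gamma)$; and the non-flatness of the Hodge splitting $\mathcal{V}_{S}\to\underline{\omega}$ forces $\Pi_{A}(\eta)$ to acquire an extra term proportional to $\Pi_{A}(\omega)(c\tau+d)^{-1}(-c^{t})$, precisely the $m=1$, $k=1$ law with $\Pi_{A}(\omega)$ as modular part, the coefficient being pinned down by the $2$nd Riemann--Hodge bilinear (Legendre period) relation. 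Since the symmetry of $\Pi_{A}(\omega)\Pi_{A}(\eta)^{t}$ is the $(1,1)$-block of the symplecticity relation for the full $4\times 4$ period matrix assembled from $\Pi_{A}(\omega),\Pi_{B}(\omega),\Pi_{A}(\eta),\Pi_{B}(\eta)$ and is the only polynomial relation among the eight entries (consistent with part (1)), one concludes $\mathcal{D}=k(S)[\Pi_{A}(\omega),\Pi_{A}(\eta)]=\widetilde{\mathcal{R}}(\Gamma)$ up to algebraic extension.

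For part (4), for any nonzero $f\in k(S)$ the logarithmic derivative $\partial_{\tau}\log f$ is a weight-two meromorphic modular form (the logarithm removes the weight ambiguity), and running the genus-one identity $\partial_{\tau}\log(\Delta/E_{4}^{3})=2\pi i\,E_{6}/E_{4}$ in reverse --- i.e.\ using the Picard--Fuchs relations linking $\partial_{\tau}\log f$ to the entries of $\Pi_{A}(\omega)$ --- one shows these logarithmic derivatives together with $k(S)$ generate $\mathcal{R}(\Gamma)$ up to algebraic extension. For an odd characteristic $\delta$, $\partial^{2}_{z}\log\vartheta_{\delta}(z,\tau)$ is a Weierstrass-type function on the principally polarized abelian surface with a double pole along the theta divisor, and its Laurent constant term $[z^{0}]\partial^{2}_{z}\log\vartheta_{\delta}$ --- the genus-two analogue of the $E_{2}$-shift, identified with the degree-zero term of the Bergman kernel --- transforms with the same anomalous $c^{t}$-term as $\Pi_{A}(\eta)$, hence supplies the missing quasi-modular direction; so $\mathcal{D}=k(S)[\partial_{\tau}\log f,[z^{0}]\partial^{2}_{z}\log\vartheta_{\delta}]$ up to algebraic extension. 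The main obstacle, shared by parts (3) and (4), is establishing that the failure of the Hodge-filtration splitting to be flat --- equivalently, the obstruction to modularity of $\Pi_{A}(\eta)$ and of $[z^{0}]\partial^{2}_{z}\log\vartheta_{\delta}$ --- reproduces exactly the $m=1$ term of the analytic transformation law with the correct normalizing coefficient, and that this coefficient is precisely the content of the $2$nd Riemann--Hodge bilinear relation; keeping all the ``up to algebraic extension'' reductions finite is the accompanying technical point.
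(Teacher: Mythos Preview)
Your proposal follows essentially the same route as the paper: Bertrand--Zudilin's differential Galois input for the transcendence degree, the explicit automorphy of $\Pi_{A}(\omega)$ and $\Pi_{A}(\eta)$ to identify modular vs.\ quasi-modular generators, and the identification of the quasi-period with the constant term of the Bergman kernel $[z^{0}]\partial_{z}^{2}\log\vartheta_{\delta}$. The overall architecture is correct.

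There is, however, a genuine logical gap in your Part~(2). You write: ``the Gauss--Manin connection yields a Picard--Fuchs system whose solution space is spanned by the periods of the frame $(\omega,\eta)$; consequently every $\partial_{\tau}^{n}t$ with $n\geq 3$ is a $k(S)$-rational combination of the entries of $\partial_{\tau}t$ and $\partial_{\tau}^{2}t$.'' This does not follow. The Picard--Fuchs system is a $\partial_{t}$-system satisfied by the \emph{periods} $\Pi_{A}(\omega),\Pi_{B}(\omega)$; it says $\partial_{t}^{2}\pi$ is a $k(t)$-combination of $\pi,\partial_{t}\pi$ (with $\pi=\Pi_{A}(\omega)$). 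It says nothing directly about $\partial_{\tau}$-derivatives of the modular functions $t$. To pass from one to the other you must first establish that $\partial_{\tau}t$ and $\pi$ generate the same field over $k(t)$ up to algebraic closure. The paper does this via the Kodaira--Spencer isomorphism $\Omega_{S}^{1}\cong\mathrm{Sym}^{\otimes 2}\underline{\omega}$: one inclusion because for any $f\in k(t)$ the quantity $\pi^{-1}(\partial_{\tau_{ij}}f)\pi^{-t}$ is a modular function, giving $\partial_{\tau}t\in k(t)[\pi]$; the reverse inclusion because, for the universal family, the entries of $\partial_{\tau}t$ form a frame for $\mathrm{Sym}^{\otimes 2}\underline{\omega}$, so all quadratic monomials in the entries of $\pi$ lie in $k(t)(\partial_{\tau}t)$, forcing $\pi\in\bigl(k(t)(\partial_{\tau}t)\bigr)^{\mathrm{alg}}$. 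Only after this bridge is in place does ``Picard--Fuchs has order two'' translate into ``$\mathcal{D}$ is generated over $k(S)$ by $\partial_{\tau}t,\partial_{\tau}^{2}t$ up to algebraic extension.'' You do allude to this relationship in Part~(3) (``the Gauss--Manin connection matrix\ldots expresses $\partial_{\tau}t$ through $\Pi_{A}(\omega)$''), but you invoke it one step too late; it is needed already in Part~(2) to justify the finiteness of the generating set.

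A smaller point in Part~(4): your genus-one analogy ``running $\partial_{\tau}\log(\Delta/E_{4}^{3})=2\pi i\,E_{6}/E_{4}$ in reverse'' is suggestive but not a proof that $k(S)[\partial_{\tau}\log f]$ recovers $\mathcal{R}(\Gamma)$. The paper's argument is again the Kodaira--Spencer one above: since $\partial_{\tau}f$ lands in $\mathrm{Sym}^{\otimes 2}\underline{\omega}$ and the entries of $\partial_{\tau}t$ span this space, $\pi$ is algebraic over $k(t)(\partial_{\tau}t)$, hence over $k(S)[\partial_{\tau}\log f]$ for any nonzero $f$.
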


 We should mention that an attempt in developing a theory of quasi-modular forms for the genus two case
 was
 made in \cite{Klemm:2015direct} using differential operators on vector-valued modular forms. As far as we understand, there is no finite generation result for a theory of the ring of quasi-modular forms in their setting.
In addition to our finite generation theorem, our results differ from \cite{Klemm:2015direct}  and other previous  works in that we provide various realizations of the \emph{differential} ring from various perspectives
such as the connection to quasi-periods. These aspects are crucial to understand points such as the
reason behind the failure of modularity.
More importantly,
the ring of quasi-modular forms that we build is universal in a certain sense
(detailed in Section \ref{secdifferentialstructure}) and can be generalized to other cases such as
 the moduli space of principally polarized Abelian varieties
of higher dimensions.
We hope to explore this, as well as the
generalization to general lattice polarized K3 surfaces and Calabi-Yau threefolds such as the quintic family\footnote{See \cite{Zhou:2021} for relevant discussions.},
in a future investigation.

 \subsection{Modularity in topological recursion and in open-closed GW theory}

     The second half of the paper is a continuation of the paper \cite{Fang:2018open}.
     When the mirror curve of the toric CY is of genus one, a detailed analysis of topological recursion in terms of quasi-modular forms
     and their Jacobian cousins is performed therein.

     Using the theory of quasi-modular forms developed earlier and the results of \cite{Grant:1985theta, Grant:1990formal}
  for the differential field of rational functions of genus two hyperelliptic Jacobians, we can then
  define a ring of meromorphic quasi-Jacobi forms (see Definition \ref{eqndefinitionofJ}).
  Then we
  generalize the previous analysis in \cite{Fang:2018open} to topological recursion for the cases where the curve family is a 3-dimensional family of genus two curves equipped with hyperelliptic structures.
Specializing to a particular genus two curve family, namely the one arising from the mirror of the resolution $\mathcal{X}$ of $\mathbb{C}^{3}/\mathbb{Z}_{6}$,
 these results on quasi-modularity in topological recursion immediately apply to the open-closed GW theory of $\mathcal{X}$ thanks to the proof of the Remodeling Conjecture \cite{BKMP2009, FLZ16}.
 That is, the open and closed GW potentials produced from topological recursion
 are pull-backs of meromorphic quasi-Jacobi forms along the Abel-Jacobi map and quasi-modular forms, respectively.
 Part of the main results is phrased as follows.

\begin{thm}[Theorem \ref{thmholhighergenusWgn}]
Let   $\mathcal{X}$ be the resolution of the toric CY 3-fold $\mathbb{C}^{3}/\mathbb{Z}_{6}$.
Let $\pi:\mathcal{C}\rightarrow S$ be its compactified mirror curve family of genus two with generic fiber $C$.
		The open GW potentials $\omega_{g,n},2g-2+n>0, n>0$ are pull-backs of meromorphic quasi-Jacobi forms along the Abel-Jacobi map
		$\phi:C\rightarrow \mathrm{Pic}^{0}(C)$ based at a Weierstrass point on $C$.
		The closed GW potentials $F_{g}=\omega_{g,0}, g\geq 2$ are quasi-modular forms.
			\end{thm}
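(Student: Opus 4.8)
The plan is to compute the Eynard--Orantin invariants $\omega_{g,n}$ of the mirror curve family $\pi:\mathcal{C}\to S$ directly from the topological recursion of \cite{Eynard:2007invariants}, track inductively on $2g-2+n$ that their coefficients lie in the ring of meromorphic quasi-Jacobi forms built in the first half of the paper, and then invoke the Remodeling Conjecture \cite{BKMP2009, FLZ16} to identify these invariants with the open and closed GW potentials of $\mathcal{X}$. So the GW input enters only at the very end as a citation; the heart of the argument is an algebraic closure statement for topological recursion on a genus-two hyperelliptic family.

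First I would fix the hyperelliptic presentation $y^{2}=\prod_{i}(x-e_{i}(t))$ of the generic fiber $C$, with the branch points $e_{i}$ expressed through Igusa's scalar-valued Siegel modular forms as recalled in Section \ref{secfamiliesofgenustwocurves}, and base the Abel--Jacobi map $\phi:C\to\mathrm{Pic}^{0}(C)$ at a Weierstrass point so that the ramification points of $x$ — which are exactly the points at which the recursion takes residues — map to the $2$-torsion points of $\mathrm{Pic}^{0}(C)$, i.e. to the odd theta characteristics. Next I would place the initial data of the recursion into the quasi-Jacobi ring: $\omega_{0,1}=y\,dx$ and the unstable $\omega_{0,2}=B$, taken to be the canonical $A$-normalized Bergman kernel (this normalization is precisely the holomorphic limit that makes the output quasi- rather than genuinely modular). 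Expressing $x,y$ and their derivatives pulled back along $\phi$ via Grant's description \cite{Grant:1985theta, Grant:1990formal} of the differential field of rational functions on the genus-two hyperelliptic Jacobian handles $\omega_{0,1}$; and writing $B$ through the prime form, whose diagonal expansion is governed by $[z^{0}]\partial^{2}_{z}\log\vartheta_{\delta}$ — quasi-modular by part (4) of Theorem \ref{thmmainthm1} — handles $\omega_{0,2}$, with poles only along the prescribed $2$-torsion divisor.

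The inductive step is then: assuming all $\omega_{g',n'}$ with $2g'-2+n'<2g-2+n$ are pull-backs of meromorphic quasi-Jacobi forms along $\phi$, apply
\begin{equation*}
\omega_{g,n}(p_{1},S_{J})=\sum_{r}\mathrm{Res}_{p\to a_{r}}K_{r}(p_{1},p)\Bigl[\omega_{g-1,n+1}(p,\sigma(p),S_{J})+\sideset{}{'}\sum\omega_{g_{1},|I_{1}|+1}(p,S_{I_{1}})\,\omega_{g_{2},|I_{2}|+1}(\sigma(p),S_{I_{2}})\Bigr],
\end{equation*}
(standard notation, $\sigma$ the hyperelliptic involution) and check that each operation preserves the ring: products of quasi-Jacobi forms are quasi-Jacobi; $\sigma$ acts on the Jacobian by translation by a $2$-torsion point, under which the quasi-Jacobi ring is stable; the recursion kernel $K_{r}$ is assembled from $\omega_{0,1}$ and $\omega_{0,2}$ near the Weierstrass point $a_{r}$; and the residue at $a_{r}$, computed in the local coordinate $\zeta$ with $x-e_{i}(t)=\zeta^{2}$, extracts a finite Laurent coefficient that, by the expansion results of Theorem \ref{thmmainthm1}(4) together with the $\partial_{t}$-stability (equivalently $\partial_{\tau}$-stability) of the differential ring $\mathcal{D}=\widetilde{\mathcal{R}}(\Gamma)$, again lands in the quasi-modular/quasi-Jacobi ring. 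For the closed potentials, $F_{g}=\omega_{g,0}$ is recovered for $g\geq 2$ from $F_{g}=\tfrac{1}{2-2g}\sum_{r}\mathrm{Res}_{p\to a_{r}}\Phi(p)\,\omega_{g,1}(p)$ with $d\Phi=y\,dx$; since $\omega_{g,1}$ already has quasi-Jacobi coefficients and the residue at a Weierstrass point kills the elliptic variable and returns the constant-in-$z$ part, the result lies in $\widetilde{\mathcal{R}}(\Gamma)$, i.e. $F_{g}$ is a plain quasi-modular form. The Remodeling Conjecture then finishes the identification with GW potentials.

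The main obstacle I anticipate is exactly the closure claim in the inductive step: showing that the residues at the ramification points remain inside the finitely generated ring rather than forcing one to adjoin new transcendental functions at each level of the recursion. This requires understanding the local expansion of the Abel--Jacobi map and of the Bergman kernel at a Weierstrass point well enough to see that every Taylor/Laurent coefficient produced is expressible through the generators of Theorem \ref{thmmainthm1} — in particular through $\partial_{\tau}\log f$ and $[z^{0}]\partial^{2}_{z}\log\vartheta_{\delta}$ — and it uses the differential-ring closure in an essential way; this is precisely where the finite generation result of the first half of the paper does the work. A secondary technical point is bookkeeping of the pole divisor so that ``meromorphic quasi-Jacobi form'' is the correct target category at every recursion step, which is handled by Grant's differential field \cite{Grant:1985theta, Grant:1990formal} and the identification of the ramification locus with $2$-torsion.
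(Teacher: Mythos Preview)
Your overall architecture---induction on $2g-2+n$ through the Eynard--Orantin recursion, closure of the quasi-Jacobi ring under products, the hyperelliptic involution, and residues at Weierstrass points, with the Remodeling Conjecture invoked only at the end---is exactly the strategy the paper follows (which in turn imports the combinatorial scheme from \cite{Fang:2018open}). The paper differs methodologically in one respect: it runs the recursion first with the Schiffer kernel $\mathcal{S}$ to obtain genuinely modular (almost-meromorphic) objects $\widehat{\omega}_{g,n}$, and only afterwards takes the holomorphic limit $\mathcal{S}\mapsto\mathcal{B}$ to land in the quasi-Jacobi ring. Your route, working directly with the Bergman kernel and tracking quasi-modularity throughout, is equally valid and arguably more direct.

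There is, however, a genuine gap in your treatment of the initial data. You write $\omega_{0,1}=y\,dx$ and say Grant's description of the rational function field of the Jacobian ``handles $\omega_{0,1}$''. For the mirror curve of a toric Calabi--Yau the relevant differential is not algebraic: it is $\lambda=\log Y\,\tfrac{dX}{X}$ (see \eqref{eqnlambdadifferential}), and $\log Y$ is transcendental over $k(C)$. Grant's results say nothing about it. What actually saves the argument is that the recursion never uses $\lambda$ itself but only the antisymmetrization $\lambda(p)-\lambda(p^{*})=\log\bigl((\tilde{Y}-h(X))/(-\tilde{Y}-h(X))\bigr)\,\tfrac{dX}{X}$ appearing in the denominator of the recursion kernel, and only its Laurent expansion in the local coordinate $\tilde{Y}$ at each ramification point. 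The paper observes (just before Remark~\ref{remhyperellipticity}) that these Laurent coefficients are rational in the branch-point data and hence Siegel-modular by Lemma~\ref{lemmodularityofparameters}. The same issue recurs in your formula for $F_{g}$: the primitive $\Phi$ satisfies $d\Phi=\lambda$, not $d\Phi=y\,dx$, and one again needs the expansion argument to see that the residue lands in $\widetilde{\mathcal{R}}(\Gamma)$. Without this step your inductive closure claim is incomplete at the base case.

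A minor correction: the hyperelliptic involution $\sigma$ acts on $\mathrm{Pic}^{0}(C)$ as $-\mathbb{1}$, not as translation by a $2$-torsion point (one has $\phi(\sigma(p))=-\phi(p)$ when the base point is Weierstrass). This does not affect your conclusion, since the quasi-Jacobi ring is stable under $u\mapsto -u$ as well, but the reasoning you give should be amended.
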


Our results not only offers an effective way to package the open- and closed- GW invariants in terms of nice
quasi-Jacobi forms and quasi-modular forms, but also
provides a promising tool in analyzing the global behavior (such as the singularities and the domain of convergence) of the open-closed GW potentials.
Furthermore, it offers some hints about the global property of
K\"ahler moduli space and of the open-moduli space in open GW theory whose rigorous definition
has so far resisted various attempts.

\subsection*{Organization of the paper}

In Section \ref{secgenustwocurveandJacobian}
 we review the basics on genus two algebraic curves and the function theory of their Jacobian varieties,
as well as some properties of different families of genus two algebraic curves that will be used in later sections.
Readers who are familiar with these can skip this section.

In Section  \ref{secringofmodularforms} we develop the theory of quasi-modular forms for genus two. We also define a
differential ring of meromorphic quasi-Jacobi forms, which includes the Bergman kernel as a generator and is suited for the purpose of studying topological recursion.

In Section \ref{secapplications} we extend the analysis in \cite{Fang:2018open} to prove the quasi-modularity in topological recursion for genus two algebraic curves.
We apply the discussions to the mirror curve of a specific toric CY 3-fold and
obtain the quasi-modularity of its open-closed GW theory using the Remodeling Conjecture.

Due to the computational nature of topological recursion, we have strived to make the paper self-contained by
collecting some useful facts and computations in the body of the paper and in Appendix \ref{secdependenceappendix} and Appendix \ref{appendixqgeometricperspective}.

\subsection*{Acknowledgments}

We would like to thank Bohan Fang for collaboration and discussions at the early stage of this work, and
Chiu-Chu Melissa Liu for helpful communications.
The bulk of this work was done during Y.~R.'s tenure at University of Michigan and he is forever grateful for the support and wonderful environment  in the department.
Y. ~R. would like to thank Albrecht Klemm for many stimulating discussion on the topics over the years.
He is partially supported by NSF grant DMS 1807079 and NSF FRG grant DMS 1564457.
J.~Z. would like to thank Kathrin Bringmann and Michael Mertens for discussions on modular forms.
J.~Z. is partially supported by a start-up grant from Tsinghua University,  the young overseas high-level talents introduction plan of China, and national key research and development program of China (NO. 2020YFA0713000).
Part of J.~Z.'s work was done while he was a postdoc at the
Mathematical Institute of University of Cologne and
was supported by German Research Foundation Grant
CRC/TRR 191.

\section{Review on genus two curves and their Jacobians}
\label{secgenustwocurveandJacobian}

We review some basics about genus two curves and their Jacobians, following
\cite{Mumford:1983tata, Mumford:1984tata, Grant:1985theta, Grant:1990formal, Mumford:1999curves, Birkenhake:2013complex, Griffiths:2014principles}.

\subsection{Hyperellipticity of genus two algebraic curves and their Jacobians}
\label{sechyperellipticJacobian}

Let $C$ be a smooth complex curve of genus $g$.
Let $C^{(n)}$ be the $n$th symmetric product of $C$.
Consider the map
\begin{equation}\label{eqnrhomap}
\rho_{n}: C^{(n)}\rightarrow  \mathrm{Pic}^{n}(C)\,,
\quad
(p_{1},p_{2},\cdots,p_{n} )\mapsto \mathcal{O}(p_{1}+p_{2}+\cdots +p_{n})\,.
\end{equation}
When $n=g$, $\rho_{n}$ gives a birational map. When $n=g-1$, the image of $\rho_{n}$ is a divisor.

Fix a degree $n$ divisor $D_{n}=nc$ on the curve $C$ for some point $c$ on $C$.
Consider the
Abel-Jacobi map
\begin{equation}\label{eqnAbelJacobimap}
\alpha_{n}: C^{(n)}\rightarrow J(C):= \mathrm{Pic}^{0}(C)\,,
\quad
(p_{1},p_{2},\cdots,p_{n} )\mapsto \mathcal{O}(p_{1}+p_{2}+\cdots +p_{n})\otimes \mathcal{O}(-D_{n})\,.
\end{equation}
When $n=1$, one can show that $\alpha_{1}$ is an embedding.
When $n=g-1$, Riemann's theorem says that one has
\begin{equation}
\rho_{g-1}(C^{(g-1)})=\kappa+\Theta
\end{equation}
for some theta divisor $\Theta$ on $J(C)$ defining a principal polarization on $J(C)$ and a line bundle $\kappa\in \mathrm{ Pic}^{g-1}(C)$.
If $\Theta$ is symmetric, that is $(-\mathbb{1})^{*}\Theta=\Theta$ where $-\mathbb{1}$ is the involution on
$J(C)$, then $\kappa$ is a theta characteristic satisfying $\kappa^2=K_{C}$.
See the textbooks \cite{Birkenhake:2013complex, Griffiths:2014principles} for more details on this.\\

We now restrict ourselves to the genus two case which is the main interest of this work.
It is a classical result that any genus two curve $C$ is hyperelliptic.
In fact, the curve can be realized as
\begin{equation}\label{eqnhyperellipticformfromcanonicalmap}
Y^{2}= F(X,Z):=\sum_{k=0}^{6}a_{k}X^{6-k}Z^{k}\,.
\end{equation}
for some degree $6$ polynomial $F$ in $X,Z$ with $a_{0}\neq 0$.
The polynomial $F$ can be factored to be
\begin{equation}
F(X,Z)=a_{0}\prod_{k=0}^{5} (X- r_{k}Z)\,.
\end{equation}
where $r_{k}, k=0,\cdots ,5$
are the roots.
In the above coordinates, the hyperelliptic cover $C\rightarrow \mathbb{P}^{1}$ is given by
$[X,Y,Z]\mapsto [X,Z]$.
The ramification points $[X,Y, Z]=[r_{k},0, 1]$ for the hyperelliptic cover have intrinsic meaning: they are exactly the Weierstrass points on the hyperelliptic curve $C$.

By applying a $\mathrm{PGL}_{2}(\mathbb{C})$-action on the base $\mathbb{P}^{1}$ of the above hyperelliptic cover,
we can assume that $r_{0}=\infty$.
Then the affine curve $C-\{r_{0}\}$ can be represented by an equation
\begin{equation}\label{eqngenustwomodelquintic}
y^2=f(x;b):=x^5+b_{1}x^4+b_{2}x^3+b_{3}x^2+b_{4}x+b_{5}=\prod_{k=1}^{5}(x-e_{k})\,.
\end{equation}
The hyperelliptic involution, denoted by  $\sigma$ below, is given by
\begin{equation}\label{eqnhyperellipticinvolution}
\sigma: C\rightarrow C\,,\quad (x,y)\mapsto (x,-y)\,.
\end{equation}
The model \eqref{eqngenustwomodelquintic} remains in its form under the change of variables
\begin{equation}\label{eqntransformationpreservingquintic}
x\mapsto s^2 x+r\,, \quad y\mapsto s^5 y\,,
\end{equation}
where $s\neq 0,r$ are complex numbers.
The curve $C$ is the normalization of the projective closure of $C-\{r_{0}\}$ in $\mathbb{P}^2$.
Its projective algebraic structure is the one obtained by gluing another patch
with $C-\{r_{0}\}$ in the usual way, see e.g.  \cite[Corollary 2.15]{Grant:1990formal}.
Alternatively, the curve can be embedded into the weighted projective space
$\mathbb{WP}^2 [1,3,1]$.
In the corresponding weighted homogeneous coordinates the point $r_{0}$, which we shall also regard it as a root of
the quintic $f$ in \eqref{eqngenustwomodelquintic}
 and denote by $e_{0}$, is given by $[1,0,0]$.

In the current genus two case, the map $\rho_{2}: C^{(2)}\rightarrow \mathrm{Pic}^2(C)$ reviewed in \eqref{eqnrhomap}
is the map that blows down the locus of unordered pairs
\begin{equation}
\{(p, \sigma(p)) | p\in C\}
\end{equation}
 to $K_{C}$.
The curve itself is embedded into $\mathrm{Pic}^2(C)$ via the map $\rho_{1}$.\\

We now make the Abel-Jacobi map in \eqref{eqnAbelJacobimap} more explicit
for the quintic model \eqref{eqngenustwomodelquintic}, following  \cite{Grant:1990formal}.
We take the following basis for the space $H^{0}(C,K_{C})$ of the 1st kind Abelian differentials
\begin{equation}\label{eqnbasisforfirstkind}
\omega_{1}={dx\over 2 y}\,,
\quad
\omega_{2}={xdx\over 2 y}\,.
\end{equation}
There is a standard and canonical way to pick a symplectic basis $\{A_1,A_2,B_1,B_2\}$ for $H_1(C,\mathbb{Z})$
, with the dual basis $\{\alpha_{1},\alpha_{2},\beta_{1},\beta_{2}\}$ for $H^{1}(C,\Z)$.
We shall denote these bases by  $\omega=(\omega_{1},\omega_{2})^{t}, (A,B)$ for simplicity.
Using these data one can form the period matrix
\begin{equation}\label{eqnperiodmatrixtheta}
\begin{pmatrix}
\int_{A_{1}} \omega_{1} & \int_{A_{2}} \omega_{1} &  \int_{B_{1}} \omega_{1} &
\int_{B_{2}} \omega_{1} \\
\int_{A_{1}} \omega_{2} & \int_{A_{2}} \omega_{2} &  \int_{B_{1}} \omega_{2} &
\int_{B_{2}} \omega_{2}
\end{pmatrix}
=
\begin{pmatrix}
\Pi_{A}(\omega) , \Pi_{B}(\omega)
\end{pmatrix}\,.
\end{equation}
It is a standard fact that $\Pi_{A}(\omega)$ is nondegenerate for the smooth curve $C$.
We shall then use the familiar notation for the normalized matrix
\begin{equation}\label{eqndefinitiontauelliptic}
\tau=\Pi_{A}(\omega)^{-1}\Pi_{B}(\omega)\,.
\end{equation}
It lies on the upper-half space $\mathcal{H}$ consisting of symmetric $2\times 2$ complex matrices with positive definite imaginary parts.

Let $\Lambda_{\tau}=\mathbb{Z}^2\oplus\tau\mathbb{Z}^2$ be the lattice in $\mathbb{C}^2$.
The Jacobian $J(C)$ is then identified with the Albanese variety $\mathbb{C}^2/\Lambda_{\tau}$  via integration with respect to the basis
$\omega_{1}, \omega_{2}$.
Choose the reference point $c$ in the definition of the Abel-Jacobi map in  \eqref{eqnAbelJacobimap} to be $e_{0}=\infty$. Then $\mathcal{O}_{C}(2e_{0})=K_{C}$.
That is, $e_{0}$ gives a theta-characteristic.
Then we have a morphism from $C^{(2)}$ to the Jacobian $\mathbb{C}^2/\Lambda_{\tau}$ via
\begin{equation}\label{eqnAJsymmetrictensorpower}
\Phi: (p_{1},p_{2})\mapsto
u=
\begin{pmatrix}
u_{1}\\
u_{2}
\end{pmatrix}=
\begin{pmatrix}
(\int_{\infty}^{p_{1}} +\int_{\infty}^{p_{2}} )\omega_{1}\\
(\int_{\infty}^{p_{1}} +\int_{\infty}^{p_{2}} )\omega_{2}
\end{pmatrix}\,.
\end{equation}

The curve $C$ is embedded into $J(C)$ by
\begin{equation}\label{eqnAJ}
\phi: p\mapsto \Phi(p, \infty)\,.
\end{equation}
By construction, the image $\Theta$ gives a principal polarization on $J(C)$.
The line bundle $\mathcal{O}_{J(C)} (\Theta)$ is ample and the linear system $|m\Theta|,m\gg 0$ gives an embedding
for the principally polarized Abelian variety $(J(C),\Theta)$. In fact, the latter is embedded into a projective space
when $m\geq 3$, see \cite{Mumford:1983tata}.

The holomorphic section of $\mathcal{O}_{J(C)} (\Theta)$, unique up to multiplication by a constant, is given by a theta function with certain characteristic.
Theta functions with characteristics $(a,b)$ are defined on the Jacobian $\mathbb{C}^2/\Lambda_{\tau}$
in the usual way, see \cite{Mumford:1983tata},
\begin{equation}\label{eqndefthetaab}
\vartheta_{(a,b)}(v,\tau)
=\sum_{n\in \mathbb{Z}^{2}} e^{\pi i (n+a)^{t} \tau (n+a) +2\pi i (n+a)^{t}(v+b) }\,,
\quad
a,b\in \mathbb{Q}^{2}\,.
\end{equation}
They are Jacobi forms with multiplier systems \cite{Eichler:1984} which enjoy nice transformation properties.
We refer the interested readers to our earlier work \cite{Fang:2018open} for a short exposition to its definitions and properties
which should be enough for the purpose of this work.

It turns out that with the above particularly chosen
frame \eqref{eqnbasisforfirstkind}
and carefully selected marking $(A,B)$, one has
\begin{equation}
\label{eqnspecialtheta}
\Theta=(\vartheta_{\delta})\,,
\quad
\delta=\left(
\begin{pmatrix}
{1\over 2}\\
{1\over 2}
\end{pmatrix},
\begin{pmatrix}
1\\
{1\over 2}
\end{pmatrix}
\right)\,.
\end{equation}
A different choice of the base point $c$ from the set of Weierstrass points corresponds to a different theta function with an odd characteristic from a total number of
$6$.
The above carefully chosen one
is convenient in performing many computations.

For any such choice of $c$,
the ramification points on $C$ are mapped to $2$-torsion points on $J(C)$ by construction.
In particular, the point
$e_{0}:=\infty$ is sent to the origin in $J(C)$.
These are the only $2$-torsion points lying on $\phi(C)$. In fact, the images generate the whole
group of 2-torsion points on $J(C)$.
See \cite{Grant:1985theta, Grant:1990formal} for detailed computations on the corresponding theta-characteristics.

\subsection{Weierstrass $\sigma$-function and functional field of hyperelliptic Jacobian}
\label{subsecfunctionalfieldofJacobian}

For later use, we also review some basics of hyperelliptic $\sigma$-functions for the genus two
curve in \eqref{eqngenustwomodelquintic}
following \cite{Grant:1990formal}.\\

We take the following 2nd kind Abelian differentials on $C$ due to Baker \cite{Baker:1907introduction}
\begin{equation}\label{eqn2ndkindframe}
\eta_{1}={ (3x^3+2b_{1} x^2+ b_{2}x)dx\over 2y} \,,
\quad
\eta_{2}={ x^2dx\over 2y} \,.
\end{equation}
These differentials\footnote{Strictly speaking, the above expressions
only describe the corresponding cohomology classes they represent in $H^{1}(C,\mathbb{C})$
in an affine patch of the curve: the full description can be obtained by
tracing the de-Rham C$\check{e}$ch resolution for the constant sheaf
via the machinery of the algebraic de-Rham cohomology.} are holomorphic except at $\infty$.
The differentials $\eta=(\eta_{1},\eta_{2})^{t}$ are dual to the ones $\omega$ in \eqref{eqnbasisforfirstkind} under the Poincar\'e residue pairing
\begin{equation}\label{eqnPoincarepairing}
\langle \omega, \eta \rangle=2\pi i\, \mathbb{1} \,.
\end{equation}
We organize the integrals over the $A$-cycles
by
\begin{equation}
\Pi_{A}(\eta)=
\begin{pmatrix}
\int_{A_{1}} \eta_{1} & \int_{A_{2}} \eta_{1}\\
\int_{A_{1}} \eta_{2} & \int_{A_{2}} \eta_{2}
\end{pmatrix}\,.
\end{equation}
The entries are called quasi-periods.

The 2-dimensional Weierstrass $\sigma$-function is defined by
\begin{equation}\label{eqndefinitionofsigma}
\sigma(u, \Pi_{A}(\omega),\Pi_{B}(\omega))=e^{-{1\over 2} u^t \Pi_{A}(\eta)\Pi_{A}(\omega)^{-1} u} \vartheta_{\delta}(\Pi_{A}(\omega)^{-1} u, \Pi_{A}(\omega)^{-1}\Pi_{B}(\omega))\,.
\end{equation}
Similar to the genus one curve, we define the 2-dimensional Weierstrass $\wp$-functions by
\begin{equation}\label{eqndefinitionofwp}
\wp_{ij}(z)=-\partial_{u_i}\partial_{u_j}\log\sigma(u, \Pi_{A}(\omega),\Pi_{B}(\omega))\,.
\end{equation}
One can consider furthermore higher derivatives such as $\wp_{ijk}=\partial_{u_{k}} \wp_{ij}$. \\

Let $k=\mathbb{C}$ and denote $\partial_u:=\{\partial_{u_{1}} ,\partial_{u_{2}}\}$ and $k_{\partial_{u}}\langle \wp_{ij}\rangle$
to be the differential closure of $k(\wp_{ij})$ obtained by adjoining all derivatives of $\wp_{ij}$ under $\partial_{u}$.
One then has the following result.
\begin{thm}[\cite{Grant:1985theta, Grant:1990formal, Buchstaber:1997, Onishi:1998complex}]\label{eqndifferentialfunctionfieldofhyperellipticJacobian}
The differential field $k_{\partial_{u}}\langle \wp_{ij}\rangle$
is finitely generated by $\wp_{ij}, \wp_{ijk}$.
Moreover, derivatives of $\wp_{ijk}$ are polynomials in the generators
$\wp_{ij}, \wp_{ijk}$.
\end{thm}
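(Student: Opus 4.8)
The plan is to establish the theorem in two stages: first that $\wp_{ij}$ and $\wp_{ijk}$ generate the differential field (finite generation), and second that the second derivatives $\partial_{u_\ell}\wp_{ijk}$ can be written as polynomials in these generators (so that no further generators beyond $\wp_{ij}, \wp_{ijk}$ are needed and the differential ring structure closes). The underlying reason both hold is the classical theory of hyperelliptic $\sigma$-functions: for the genus two quintic model \eqref{eqngenustwomodelquintic}, Baker's formalism expresses all Kleinian $\wp$-functions and their derivatives in terms of the finitely many functions attached to the symmetric product $C^{(2)}$, and these satisfy algebraic relations coming from the image $\rho_2(C^{(2)}) \subset \mathrm{Pic}^2(C)$ together with its singular (Kummer) structure.

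First I would recall the key algebraic fact from \cite{Grant:1990formal}: on the Jacobian $\mathbb{C}^2/\Lambda_\tau$, the coordinate ring of the affine Jacobian (the complement of $\Theta$) is generated by $\wp_{11}, \wp_{12}, \wp_{22}$ together with one of the third-order functions, and there is an explicit algebraic relation — the defining equation of the Kummer surface — among $\wp_{11},\wp_{12},\wp_{22}$ and, say, $\wp_{111}$ (equivalently, a relation expressing $(\wp_{111})^2$, or a suitable combination of the $\wp_{ijk}$, as a polynomial in $\wp_{11},\wp_{12},\wp_{22}$ with coefficients built from the $b_i$ of \eqref{eqngenustwomodelquintic}). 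Concretely one uses the formulas of Baker/Grant: pulling back via the Abel--Jacobi map $\Phi$ of \eqref{eqnAJsymmetrictensorpower}, a point $u$ corresponds to an unordered pair $\{(x_1,y_1),(x_2,y_2)\}$, and one has $\wp_{22} = x_1+x_2$, $\wp_{12} = -x_1 x_2$, while $\wp_{11}$ and the third derivatives are rational in $x_1,x_2,y_1,y_2$; eliminating $x_1,x_2,y_1,y_2$ yields the sought relations. This immediately gives that $k(\wp_{ij})$ adjoined with the $\wp_{ijk}$ is an algebraic — hence finite — extension containing all the generators, and one checks the differential field $k_{\partial_u}\langle\wp_{ij}\rangle$ is contained in it.

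For the second statement I would differentiate the Kummer-surface relation (and the other Grant relations among second and third derivatives) with respect to $\partial_{u_1}$ and $\partial_{u_2}$. Since $\partial_{u_\ell}\wp_{ij} = \wp_{ij\ell}$ and $\partial_{u_\ell}\wp_{ijk} = \wp_{ijk\ell}$, differentiating a polynomial identity $P(\wp_{ij}, \wp_{ijk}) = 0$ yields a linear equation for the fourth-order functions $\wp_{ijk\ell}$ with coefficients polynomial in the generators; because the relevant Jacobian of $P$ in the $\wp_{ijk}$-variables is invertible on the dense open locus (this is where the hyperelliptic structure and nonvanishing of the discriminant of $f(x;b)$ enter), one solves for $\partial_{u_\ell}\wp_{ijk}$ as a polynomial in $\wp_{ij}, \wp_{ijk}$. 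Iterating shows all higher derivatives lie in $k[\wp_{ij}, \wp_{ijk}]$.

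The main obstacle I anticipate is not the conceptual structure but keeping the explicit Baker--Grant identities organized: one needs the precise form of the Kummer relation and of the expressions for $\wp_{ijk}$ in the quintic normalization \eqref{eqngenustwomodelquintic}, and one must verify the non-degeneracy (invertibility of the relevant Jacobian) that legitimizes solving for the higher derivatives. Since the theorem is quoted from \cite{Grant:1985theta, Grant:1990formal, Buchstaber:1997, Onishi:1998complex}, the cleanest route is to cite those explicit formulas rather than rederive them, and then carry out the short differentiation-and-elimination argument above; the delicate point to state carefully is that ``polynomials in the generators'' means over the base $k=\mathbb{C}$ extended by the coefficients $b_1,\dots,b_5$, which are constants under $\partial_u$.
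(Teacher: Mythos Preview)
The paper does not give its own proof of this theorem: it is stated as a cited result from \cite{Grant:1985theta, Grant:1990formal, Buchstaber:1997, Onishi:1998complex}, followed only by contextual remarks (the bases \eqref{eqnbasisformTheta} of the Riemann--Roch spaces $\mathcal{L}(m\Theta)$, the $|3\Theta|$-embedding into $\mathbb{P}^8$, and references for the explicit fourth-derivative formulae). Your proposal is a reasonable outline of the argument that lives in those references --- Baker's identities, the Kummer relation, and differentiation-and-elimination --- and is in that sense aligned with what the paper invokes.

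One point worth noting: the paper's Remark immediately after the theorem sketches an \emph{alternative} route, namely constructing the bifundamental from the prime form via $\vartheta_\delta$ or $\sigma$, and comparing it with Klein's algebraic bifundamental for hyperelliptic curves; matching Laurent expansions then yields the polynomial relations among the $\wp$-derivatives. This is a more analytic/theta-function approach than the Kummer-surface elimination you describe, though both ultimately rest on the same hyperelliptic algebra. Your plan is sound; just be aware that in the paper this theorem is a black-box citation, not an argument to be reconstructed.
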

In fact, derivatives of $\wp_{ij}$ give rise to bases of the Riemann-Roch spaces $\{\mathcal{L}(m\Theta)\}_{m}$. For example, one has
\begin{eqnarray}\label{eqnbasisformTheta}
\mathcal{L}(\Theta)&=&\mathbb{C}\,,\nonumber\\
\mathcal{L}(2\Theta)/\mathcal{L}(\Theta)&=&\mathbb{C}\wp_{11}\oplus \mathbb{C}\wp_{12}\oplus \mathbb{C}\wp_{22}
\,,\nonumber\\
\mathcal{L}(3\Theta)/\mathcal{L}(2\Theta)&=&\mathbb{C}\wp_{111}\oplus \mathbb{C}\wp_{112}\oplus \mathbb{C}\wp_{122}
\oplus \mathbb{C}\wp_{222} \oplus \mathbb{C} (\wp_{11}\wp_{22}-\wp_{12}^2)\,.
\end{eqnarray}
The complete linear system $|3\Theta|$ defines a projective embedding of $J(C)$ into $\mathbb{P}^{8}$.
The set of defining equations are worked out in \cite{Grant:1990formal}.
As a consequence,
the differential field $k_{\partial_{u}}(J(C))$ of the functional field $k(J(C))$ is finitely generated by the generators in
$\mathcal{L}(3\Theta)$.
See \cite{Grant1988:generalization, Grant:1990formal, Buchstaber:1997, Onishi:1998complex, Buchstaber:2012multi} for some examples
where the
4th derivatives
are expressed in terms of polynomials of the generators in $\mathcal{L}(3\Theta)$.

\begin{rem}\label{remprincipalpartofBergmankernel}
An alternative explanation of the above statement is as follows.
From the $\vartheta_{\delta}$ or
$\sigma$-function above, one can construct the prime form and hence the bifundamental in the usual way.
Particularly for hyperelliptic curves, there is another algebraic way of constructing
this bifundamental (called the Klein bifundamental). The result then follows by comparing
the Laurent expansions of the bifundamental constructed in these two different ways.
See \cite{Baker:1898hyperelliptic, Buchstaber:1996hyperelliptic, Buchstaber:1997, Onishi:1998complex, Matsutani:2001hyperelliptic, Fay:2006theta, Buchstaber:2012multi} for related discussions.
The hyperellipticity will be essential in our work in studying topological recursion, as we shall  explain below in Remark \ref{remhyperellipticity}.
\end{rem}

Since $C^{(2)}$ is birational to $J(C)$, the pullback of rational functions in $k(J(C))$ are rational functions on $C\times C$ that are invariant under permutation.
It is shown in \cite{Grant:1985theta} that
\begin{equation}\label{eqnfunctionalfieldJacobian}
k(J(C))=\mathbb{C}(x_{1}+x_{2},x_{1}x_{2}, y_{1}+y_{2} )\,.
\end{equation}
Explicit formulae for $\wp_{ij},\wp_{ijk}\in k(J(C))\subseteq k_{\partial_{u}}(J(C))$ in terms of the generators
$x_{1}+x_{2},x_{1}x_{2}, y_{1}+y_{2} $ can be found in \cite{Grant:1985theta, Grant:1990formal}.
For example. one has
\begin{equation}\label{eqnrelationbetweenwpijandrationalfunctions}
\wp_{22}=(x_{1}+x_{2})\,,
\quad
\wp_{12}=-x_{1}x_{2}\,,
\quad
\wp_{11}= {G(x_{1},x_{2})-2y_{1}y_{2}\over (x_{1}-x_{2})^2}
\,,
\end{equation}
where
\begin{eqnarray}\label{eqnprincipalpartofBergmankernel}
G(x_{1},x_{2})&=&(x_{1}+x_{2})(x_{1}x_{2})^2+2b_{1}(x_{1}x_{2})^{2}+b_{2}(x_{1}+x_{2})(x_{1}x_{2})\nonumber\\
&&+2b_{3} (x_{1}x_{2}) +b_{4}(x_{1}+x_{2})+2b_{5}\,.
\end{eqnarray}

The pull-back of $\wp_{ij}$ to the curve $C$ along $\phi$, which is essentially equivalent to the restriction
to $\phi(C)$, are discussed in \cite{Grant1988:generalization, Grant:1990formal, Grant1991:generalization}.
This amounts to replacing the variables $(x_2,y_2)$ by the coordinates for $\infty$
which then reduces the map $\Phi$ in \eqref{eqnAJsymmetrictensorpower} to the map $\phi$ in \eqref{eqnAJ}.
In particular, it is found that
\begin{equation}\label{eqnexyintermsofwpij}
{\wp_{22}\over \wp_{12}}|_{\phi(C)}={\sigma_{2}\over \sigma_{1}}|_{\phi(C)}=-{1\over x}\,,
\quad
y=-{\wp_{11}\wp_{22}-\wp_{12}^2\over 2 \wp_{222}}|_{\phi(C)}\,.
\end{equation}

The Laurent expansions of $\sigma$ in terms of the algebraic local uniformizers made from $x,y$,
and the analytic coordinates from $u_1,u_2$ on $\phi(C)\cong C$  can be found in \cite{Grant:1990formal, Onishi:1998complex, Onishi:2002determinant}.
In fact, it is found that around a point $p$ a local uniformizer can be taken to be
$x-x(p)$ if $p\neq e_k$;
$y$ if $p=e_{k},k\neq 0 $; $1/x^{1\over 2}$ if $p=e_0$.
Also away from $e_{0}$, the analytic coordinate $u_{1}-u_{1}(p)$ serves as a local uniformizer,
and around $e_0$ the analytic coordinate $u_{2}$ does so.\\

The dependence of all of the above constructions on the choice of marking $(A,B)$ and the choice of frame $\omega$
are discussed in \cite{Grant:1985theta}.
This will be useful in our later discussions on quasi-modularity and
some necessary materials are collected in Appendix \ref{secdependenceappendix}.

\subsection{Families of genus two curves}
\label{secfamiliesofgenustwocurves}

\subsubsection{Scalar-valued Siegel modular forms}

It is known since Igusa \cite{Igusa:1962siegel, Igusa:1964siegel} that the ring of scalar-valued Siegel modular forms for
$\Gamma(1)=\mathrm{Sp}_{4}(\mathbb{Z})$ is generated by the Eisenstein series
\begin{equation}
E_{2k}(\tau)=\sum_{(c,d)=1} \det (c\tau+d)^{-2k}\,,\quad k\geq 2\,,
\end{equation}
which are defined as the summation over all coprime symmetric pairs.
They can be regarded as holomorphic sections of
$K_{\mathcal{A}}^{\otimes k}$, where $K_{\mathcal{A}}$
is the canonical bundle of the Siegel modular variety
$\mathcal{A}:=\Gamma(1)\backslash\mathcal{H}$.

The following normalized cusp forms (that is, those holomorphic ones vanishing at the boundary of
$\mathcal{A}$) $\chi_{10},\chi_{12}$ are convenient
\begin{eqnarray}
\chi_{10}
&=&
-43867\cdot 2^{-12}3^{-5}5^{-2}7^{-1}53^{-1}(E_{4}E_{6}-E_{10})\,,\nonumber\\
\chi_{12}&=&
131\cdot 593\cdot 2^{-13}3^{-7}5^{-3}7^{-2} 337^{-1} (3^2 7^2 E_{4}^3+2\cdot 5^2 E_{6}^2-691 E_{12})\,.
\end{eqnarray}
In the above we have omitted the argument $\tau$ in the Siegel modular forms.
It is shown in  \cite{Igusa:1962siegel, Igusa:1964siegel} that the ring $M(\Gamma(1))$ of scalar-valued Siegel modular forms of genus two for the Siegel modular group $\Gamma(1)$
is given by the ring with
five generators subject to one relation
\begin{equation}
M(\Gamma(1))=
\mathbb{C}[E_{4}, E_{6}, \chi_{10}, \chi_{12}, \chi_{35}]/ \chi^{2}_{35}=\chi_{10} \cdot P(E_{4}, E_{6}, \chi_{10}, \chi_{12})\,,
\end{equation}
where all of the generators and the polynomial $P$
are explicit in terms of Eisenstein series given in \cite{Igusa:1962siegel, Igusa:1964siegel}.
The relations of the above  Eisenstein series to theta constants are also worked out in \cite{Igusa:1967modular}.
The ring of modular forms for a class of congruence subgroups
of $\Gamma(1)$ are shown to be generated by theta constants \cite{Igusa:1964siegel}.

Limits of these Siegel modular forms
around singular points on the Siegel modular variety $\Gamma(1)\backslash \mathcal{H}$ are
also discussed \cite{Igusa:1962siegel}.
This will potentially be useful for the later purpose of
studying the enumerative content of the Gromov-Witten potentials near singular points in the moduli space.

\subsubsection{Binary invariants, Igusa absolute invariants, and Torelli map}

Let $\mathcal{M}$ be the moduli space of smooth sextics.
By assigning the (canonical) principal polarized Jacobian $J(C)$ to a smooth genus two curve $C$ defined by $Y^{2}=F(X;a)$
in \eqref{eqnhyperellipticformfromcanonicalmap}, we obtain the Torelli map
from the moduli space $\mathcal{M}$ of smooth genus two curves
to the Siegel modular variety $\mathcal{A}=\Gamma(1)\backslash \mathcal{H}$.
Recall that the Siegel modular variety is the moduli space of principally polarized Abelian varieties of dimension 2.
The Torelli theorem states that the Torelli map
 is an injective, birational map.
In particular rational functions on the moduli space $\mathcal{M}$ are identified with modular functions for the Siegel modular group $\Gamma(1)$.\\

Consider a family of genus two curves given by \eqref{eqnhyperellipticformfromcanonicalmap} which in inhomogenized coordinates of
$\mathbb{WP}^2[1,3,1]$ is given by
\begin{equation}\label{eqnhyperellipticsextic}
Y^{2}= F(X;a)=\sum_{k=0}^{6}a_{k}X^{6-k}=a_{0}\prod_{k=0}^{5} (X- r_{k})\,,
\quad
a_{0}\neq 0\,.
\end{equation}
The binary invariants $A,B,C,D$ for the sextic $F(X;a)$ are defined in terms of the roots of $F(x;a)$, see \cite{Igusa:1962siegel} (also \cite{Igusa:1967modular}).
For example, $D$ is the discriminant $a_{0}^{10}\prod_{0\leq i <j\leq 5}(r_{i}-r_{j})^2$.
They can be expressed in terms of the coefficients
$a_{k}, k=0,1,\cdots,6$
of the sextic $F(x;a)$.
For example, one has
\begin{equation}
A=6a_{3}^2-16 a_{2}a_{4}+40 a_{1}a_{5}-240 a_{0}a_{6}\,,\quad
\mathrm{Resultant}(F,\partial_{X}F)=-a_{0}D\,,
\end{equation}
where $Resultant$ represents the resultant.
Here we omit the detailed expressions and refer the interested readers to \cite{Klemm:2015direct} for a collection of the  relevant formulae.
The Igusa absolute invariants
are given in terms of the binary invariants by
\begin{equation}\label{eqnabsoluteinvariantsintermsofbinaryinvariants}
j_{1}={2^4 3^2 B\over A^2}\,,
\quad
j_{2}=2^{6} 3^{3} {(3C-AB)\over A^3}
\,,
\quad
j_{3}=2 \cdot 3^{5} {D\over A^5}
\,.
\end{equation}

The Torelli theorem concerns the period map
which assigns to a smooth genus 2 curve with marking its normalized period
$\tau$.
It turns out that the
absolute invariants \eqref{eqnabsoluteinvariantsintermsofbinaryinvariants} are identified \cite{Igusa:1962siegel} with the following modular functions $\mathfrak{j}_{1},\mathfrak{j}_{2},\mathfrak{j}_{3}$ for $\Gamma(1)$,
which are generators of the rational function field of the Siegel modular variety $\Gamma(1)\backslash \mathcal{H}$,
\begin{equation}
\mathfrak{j}_{1}= {E_4 \chi_{10}^2\over \chi_{12}^2}\,,
\quad
\mathfrak{j}_{2}={E_6 \chi_{10}^3\over \chi_{12}^3}
\,,
\quad
\mathfrak{j}_{3}={ \chi_{10}^6\over \chi_{12}^5}
\,.
\end{equation}
Explicit relations between the binary invariants
and the Eisenstein series for the modular group $\mathrm{Sp}_{4}(\mathbb{Z})$, that involve the Jacobian
from the $\tau$-variables to the absolute invariants,
are worked out in \cite{Igusa:1962siegel}. In particular, the relations between the absolute invariants
and Eisenstein series are laid out there.\\

We now look at the singular loci of the space of sextics in the model \eqref{eqnhyperellipticsextic}.
As explained in \cite{Igusa:1960arithmetic}, the binary invariants $B,C,D$ (as symmetric polynomials in the roots) vanish simultaneously at sextics with triple roots,
all of the genus two curves are mapped to the same point in $\mathcal{M}$. Blowing up this point then recovers the space parametrizing genus two curves with $A\neq 0$
and their degenerations.
From the perspective of $\mathcal{A}$, a Jacobian variety corresponding to a point on
$H_{1}:=\{\tau_{12}=0\}$, called the Humbert surface of degree $1$,
would satisfy $\chi_{10}=0, \chi_{12}\neq 0$
and
$[A,B,C,D]=[1,0,0,0]$.
It is also known that points on the locus $H_{4}=\{\tau_{11}=\tau_{22}\}$ in $\mathcal{A}$, called the Humbert surface of degree $4$, correspond to sextic curves
which have extra automorphism groups.
Hence this Humbert surface serves as the orbifold loci.

\subsubsection{Other models: quintic and Rosenhain normal form}
\label{secothermodels}

Assume that the discriminant $D$ of the curve in \eqref{eqnhyperellipticsextic} is non-vanishing and that $r_{0}\neq 0$.
The coordinate transformation
\begin{equation}\label{eqntransformationfromsextictoquintic}
X={ x+c_1\over x+c_2} r_{0}\,,
\quad
Y=  (a_{0} r_{0} (c_1-c_2)\prod_{k=1}^{5} (r_{0}-r_{k}))^{1\over 2} b_{0}^{-{1\over 2}}\cdot {y\over (x+c_2)^{3}} \,, b_0\neq 0\,,
\end{equation}
where $c_1,c_2$ are undetermined coefficients,
changes \eqref{eqnhyperellipticsextic} to the form in \eqref{eqngenustwomodelquintic}
\begin{equation} \label{eqnhyperellipticquintic}
y^{2}=b_0 \prod_{k=1}^{5}(x+ {c_1 r_0-c_2 r_k \over r_0-r_k}):=b_0 x^5+\sum_{k=1}^{5} b_{k}x^{5-k}=f(x;b)\,.
\end{equation}
The binary invariants in terms of the coefficients for the general quintic case
are  studied in \cite{Igusa:1960arithmetic, Krishnamoorthy:2005invariants}
(see also \cite{Grant1994:units} for the special case $b_0=1$).
Relations between theta constants and branch points are given by Rosenhain's and Thomae's formula,
see for example \cite{Mumford:1983tata, Grant:1985theta, Grant1988:generalization, Buchstaber:1997, Enolski:2007periods, Eilers:2018rosenhain}.\\

Now we apply the results in
\cite{Grant:1985theta, Grant:1990formal} to the curve in \eqref{eqnhyperellipticquintic}, which are reviewed in Section \ref{subsecfunctionalfieldofJacobian}.
 This yields explicit expressions for the rational functions $x,y$ on the curve \eqref{eqnhyperellipticquintic}
 and hence those on the curve \eqref{eqnhyperellipticsextic}
   in terms of pull-back of rational functions in
$\wp_{ij},\wp_{ijk}$ which are Jacobi forms.

As reviewed earlier, the model in \eqref{eqnhyperellipticquintic} remains its form under the transformation
\eqref{eqntransformationpreservingquintic}.
Note that this transformation is not same as the one discussed in
\eqref{eqntransformationofHodgeframeappendix} that changes the basis $\omega$ of the 1st kind  Abelian differentials.
A transformation in \eqref{eqntransformationpreservingquintic}
amounts to changing the values for $c_{1},c_{2}$ in the coordinate transformation
\eqref{eqntransformationfromsextictoquintic}.
Insisting that the equation takes the following Rosenhain normal form
\begin{equation} \label{eqnhyperellipticRosenhain}
y^{2}=x(x-1)(x-\lambda_1)(x-\lambda_2)(x-\lambda_3)
\end{equation}
fixes the transformation \eqref{eqntransformationpreservingquintic} up to permutations of the roots.
One such choice is such that the overall transformation in \eqref{eqntransformationfromsextictoquintic}
is given by
\begin{equation}
X=r_{0}{{r_{4}-r_{2}\over r_{4}-r_{0}} x -{r_{2}\over r_{0}}\over  {r_{4}-r_{2}\over r_{4}-r_{0}} x -1}\,.
\end{equation}
That is,
\begin{equation}\label{eqnquintictoRosenhain}
c_{1}=-{r_{2}\over r_{0}}\cdot {r_{4}-r_{0}\over r_{4}-r_{2}}\,,
\quad
c_{2}=-1\cdot {r_{4}-r_{0}\over r_{4}-r_{2}}\,.
\end{equation}
The roots $r_{k},k=2,4,0$ in the $X$-coordinate get changed to $0,1,\infty$ in the $x$-coordinate, while the rest of the roots $r_{1},r_{3},r_{5}$
become
\begin{equation}
\lambda_k={r_4-r_0\over r_4-r_2}\cdot{{r_{2k-1}}-{r_2}\over r_{2k-1}-{r_0}}\,,\quad k=1,2,3\,.
\end{equation}
Explicit relations between the binary invariants and the parameters $\lambda_{k},k=1,2,3$
can be found in \cite{Igusa:1960arithmetic}.
See also \cite{Krishnamoorthy:2005invariants} for a nice summary on the explicit formulae.
The works  \cite{Malmendier:2016universal, Malmendier:2017satake} also collect formulae about these relations.\\

It turns out that the Rosenhain normal form gives the universal family of smooth genus two curves with level two structures.
To be more precise, for any ordered triple $(\lambda_{1},\lambda_{2},\lambda_{3})$, where $\lambda_{1},\lambda_{2},\lambda_{3}$
are all distinct and are distinct from $0,1,\infty$,
the curve \eqref{eqnhyperellipticRosenhain}
determines a point in the coarse moduli space $\mathcal{M}(2)$
of genus two curves with level two structures.
The period matrix $\tau$ gives a point in the Siegel modular variety
$\mathcal{A}(2):=\Gamma(2)\backslash \mathcal{H}$,
here
\begin{equation}
\Gamma(2):=\{\gamma=
\begin{pmatrix}
a & b\\
c & d
\end{pmatrix}\in \Gamma(1)=\mathrm{Sp}_{4}(\mathbb{Z})\,|\,
\gamma\equiv
\begin{pmatrix}
\mathbb{1} & 0\\
0 & \mathbb{1}
\end{pmatrix}
\,\mathrm{mod}\,2\}\,.
\end{equation}
Conversely, for any point $\tau\in \mathcal{A}(2)$, there is a genus two curve
in the Rosenhain normal form  whose normalized period is $\tau$.
In particular, the parameters
$\lambda_{k},k=1,2,3$ are modular functions for the modular subgroup $\Gamma(2)$
whose field of rational functions consists of
rational functions in the level 2 theta functions
with characteristics \cite{Mumford:1983tata}.

It is a standard fact that the modular subgroup $\Gamma(2)$
fixes the theta characteristics and leaves $\lambda_{k}$'s invariant, and that the group $\Gamma(2)\backslash \Gamma(1)$
is the Galois group for the field extension $\mathbb{C}(\lambda_{1},\lambda_{2},\lambda_{3})/\mathbb{C}(j_{1} ,j_{2},j_{3})$.
The forgetful map $\mathcal{M}(2)\rightarrow \mathcal{M}$
is a Galois cover of degree $|\mathfrak{S}_{6}|=720$, where $\mathfrak{S}_{6}$
acts on the roots of a sextic by permutations and hence induces action on
the ordered triple $(\lambda_{1},\lambda_{2},\lambda_{3})$.
Therefore up to this action, the Rosenhain family \eqref{eqnhyperellipticRosenhain} gives the universal family
over $\mathcal{M}(2)$.

The relations between the parameters $\lambda_{k}$ and rational functions in the level two theta constants
 are determined up to the  Galois action.
One such relation is given by Rosenhain and can be found in
\cite{Igusa:1962siegel},
\begin{equation}\label{eqnRosenhainlambda}
\lambda_{1}=({\theta_{1100} \theta_{1000}\over
\theta_{0100} \theta_{0000}})^2\,,\quad
\lambda_{2}=({\theta_{1001} \theta_{1100}\over
\theta_{0001} \theta_{0100}})^2\,,\quad
\lambda_{3}=({\theta_{1001} \theta_{1000}\over
\theta_{0001} \theta_{0000}})^2\,.
\end{equation}
Here we have followed the convention in \cite{Igusa:1962siegel} for the theta constants, which is related to the one $\vartheta_{(a,b)}$ in \eqref{eqndefthetaab} by
\begin{equation}\label{eqnthetaconvention}
\theta_{xyzw}=\vartheta_{( a,b   )}\,,
\quad
a={1\over 2}(x,y)^{t}\,,\quad  b={1\over 2}(z,w)^{t}\,.
\end{equation}
The work
\cite{Malmendier:2017satake} provides a nice summary on the relations among the
parameters $\lambda_{k}$ in the Rosenhain normal form, Eisenstein series, and theta constants.

\section{Differential rings of quasi-modular and quasi-Jacobi forms for genus two}
 \label{secringofmodularforms}

Following the sheaf language in \cite{Urban:2014nearly},
we shall first define the ring  $\widetilde{\mathcal{R}}$ of quasi-modular forms. It is obtained by
adjoining extra generators to the ring $\mathcal{R}$
of modular forms valued in certain representations.
These extra generators are not modular, but quasi-modular in the sense generalizing the one in \cite{Kaneko:1995}.
Geometrically they are related to the periods of the 2nd kind Abelian differentials called
quasi-periods. The ring of quasi-modular forms turns out to be a differential ring
due to the connection to periods and quasi-periods whose differential closure can be studied via differential Galois theory  \cite{Bertrand:2000transcendence}.
Our  results reveal various facets of quasi-modular forms from different angles such as representation theory, Hodge theory and complex differential geometry.

We then construct a ring $\mathcal{J}$ of meromorphic Jacobi forms consisting of higher dimensional analogues of Weierstrass $\wp$-functions and their derivatives. Due to hyperellipticity this ring exhibits a very  simple  differential ring
structure
 \cite{Grant:1990formal, Onishi:1998complex}.
 Finally we define a ring  $\tilde{\mathcal{J}}$ of meromorphic quasi-Jacobi forms
 to be the $\widetilde{\mathcal{R}}$-module $\widetilde{\mathcal{R}}\otimes \mathcal{J}$.

\subsection{Quasi-modular forms}
\label{secquasimodularforms}

The theory of quasi-modular forms will be developed for universal families of smooth polarized Abelian varieties $\pi:\mathcal{Z}\rightarrow S$
which do not necessarily arise as families of Jacobian varieties attached to families of
genus two curves.\\

We shall only work with universal families in which $S$ is a Siegel modular variety
(more precisely, stack) $\Gamma\backslash \mathcal{H}$,
where $\Gamma$ is a congruence subgroup of ${\rm  Sp}_{4}(\mathbb{Z})$.
The resulting variety $S$ is algebraic and in fact quasi-projective, with the embedding given by theta constants.
The frequently studied cases in the literature are the full modular group
${\rm Sp}_{4}(\mathbb{Z})$ with the embedding given by the Igusa absolute invariants, and
the Igusa modular groups
 $\Gamma(2),\Gamma_{2,4}, \Gamma_{4,8}$ with the theta embeddings given by theta constants of level 4, 2 and 1 respectively.
See \cite{VanderGeer:2008, Clingher:2018normal} for a collection of the definitions of these groups and the corresponding theta constants.

The subtlety of compactification of the modular variety $\Gamma\backslash \mathcal{H}$ is in fact not important for our purpose, we can simply use Baily-Borel if needed.
The reason is that we shall only need to use the functional field of the variety $\Gamma\backslash \mathcal{H}$.
However, interested readers are referred to \cite{VanderGeer:2008, Urban:2014nearly, Liu:2016nearly}
for details on how the various constructions can be extended to the compactification.

We shall also ignore the subtlety on multiplier system in the definition of modular forms:
we may pass to a smaller congruence subgroup
or work freely with algebraic extensions if needed (cf. Lemma \ref{lemmodularityofparameters} below).
Most of the constructions in this section will be independent of the precise choice of $\Gamma$.
For this reason we shall often omit the symbol $\Gamma$ in the notations, if no confusion should arise.

\subsubsection{Quasi-periods: archetype
}

For the given family $\pi: \mathcal{Z}\rightarrow S=\Gamma\backslash \mathcal{H}$,
the function field $k(S)$
has transcendental degree $3$ over $k=\mathbb{C}$.
We take three modular functions $t_{1},t_{2},t_{3}$
for $\Gamma$ which generate $k(S)$ up to algebraic extension.
We denote these generators collectively by $t$ and similarly the partial derivatives collectively by $\partial_{t}$.

The relevant locally free sheaves/bundles that enter the story are
the cotangent bundle
$\Omega_{S}^{1}$,
 $\underline{\omega}:=R^{0}\pi_{*}\Omega^{1}_{\mathcal{Z}|S}$, and the Hodge bundle
$\mathcal{V}_{S}:=R^{1}\pi_{*}\mathbb{C}_{\mathcal{Z}}\otimes\mathcal{O}_{S}$.
For a universal family,
the Kodaira-Spencer map gives
\begin{equation}\label{eqnKSmorphism}
\Omega_{S}^{1}\cong \mathrm{Sym}^{\otimes 2} \underline{\omega}\,.
\end{equation}
See Appendix \ref{appendix:reps} for details.
The Hodge filtration is the relative version of the following sequence for an Abelian variety
$Z$
\begin{equation}\label{eqnHodgefiltration}
0\rightarrow H^0(Z,\Omega^{1}_{Z})\rightarrow H^{1}_{dR}(Z)\rightarrow H^{1} (Z,\mathcal{O}_{Z})\rightarrow 0\,.
\end{equation}

We fix a  locally constant frame $\{A,B\}$ for the local system $R^{1}\pi_{*}\mathbb{C}_{\mathcal{Z}}$
with dual frame $\{\alpha,\beta\}$.
We also choose a Hodge frame $\{\omega,\eta\}$
adapted for the Hodge filtration for the family $\pi: \mathcal{Z}\rightarrow S$:
$\omega$ is a frame for $\underline{\omega}$, and $\{\omega, \eta\}$ a frame for
$\mathcal{V}_{S}$ satisfying a relation similar to \eqref{eqnPoincarepairing}
\begin{equation}\label{eqndeRhampairing}
\langle \omega, \eta \rangle=2\pi i\, \mathbb{1} \,.
\end{equation}
The parings between elements in the frame $\{\omega,\eta\}$ with those in  $\{A,B\}$
are given by the corresponding period integrals.
In the case where
the family $\pi: \mathcal{Z}\rightarrow S$ does arise as the
family of Jacobian varieties of a genus two curve family $\mathcal{C}\rightarrow S$, these period integrals
coincide with the period integrals for the curves.
This is due to the fact that the Jacobian $J(C)$ of a curve $C$, defined earlier as
$\mathrm{Pic}^{0}(C)$ or the Albanese variety,  is isomorphic to the intermediate Jacobian of $C$.
See \cite{Birkenhake:2013complex} for details.
For this reason, we shall use the same notations
for the matrices of periods and quasi-periods for the family $\pi: \mathcal{C}\rightarrow S$.

Since the Kodaira-Spencer map \eqref{eqnKSmorphism} is an isomorphism for the universal family  $\pi: \mathcal{Z}\rightarrow S$, we can in fact take
\begin{equation}\label{eqnetaisGaussManin}
\eta=\nabla_{v_{0}}\omega
\end{equation} for some $v_{0}\in T_{S}$.
The convention for the period matrix and the quasi-period matrix that we adapt in studying modularity is
\begin{equation}\label{eqnperiodmatrix}
\Pi:=\begin{pmatrix}
\Pi_{B}(\omega) & \Pi_{B}(\eta)\\
\Pi_{A}(\omega) & \Pi_{A}(\eta)
\end{pmatrix}
=
\begin{pmatrix}
\int_{B_{1}}\omega_{1} & \int_{B_{1}}\omega_{2} & \int_{B_{1}}\eta_{1} & \int_{B_{1}}\eta_{2} \\
\int_{B_{2}}\omega_{1} & \int_{B_{2}}\omega_{2} & \int_{B_{2}}\eta_{1} & \int_{B_{2}}\eta_{2} \\
\int_{A_{1}}\omega_{1} & \int_{A_{1}}\omega_{2} & \int_{A_{1}}\eta_{1} & \int_{A_{1}}\eta_{2} \\
\int_{A_{2}}\omega_{1} & \int_{A_{2}}\omega_{2} & \int_{A_{2}}\eta_{1} & \int_{A_{2}}\eta_{2}
\end{pmatrix}\,.
\end{equation}
This convention
seems to be more convenient than the one given in Section \ref{secgenustwocurveandJacobian}.
For example, under this convention the action on
$\tau=  \Pi_{B} (\omega)\Pi_{A}(\omega)^{-1}$ is given by
\eqref{eqntautransform}, while the action on
 \eqref{eqndefinitiontauelliptic} would be less appealing.
Straightforward computation by using the Riemann-Hodge bilinear relations \eqref{eqnRiemannHodgePi} tells that the transformation law of $\Pi_{A}(\omega)^{-1}  \Pi_{A}(\eta) $ under $\gamma=(a ,b; c,d)\in \Gamma$
is
\begin{equation}\label{eqntransformationofquasiperiods}
\Pi_{A}(\omega)^{-1}  \Pi_{A}(\eta)\mapsto
\Pi_{A}(\omega)^{-1}  \Pi_{A}(\eta)
+(\Pi_{A}(\omega))^{-1} (c\tau+d)^{-1} 2\pi i c (\Pi_{A}(\omega))^{-t}\,.
\end{equation}
Henceforward we shall adapt this convention.
Explicit modular transformation rules will be computed in this one, and then transferred to the one
in Section \ref{secgenustwocurveandJacobian} if needed.
See Appendix \ref{secdependenceappendix} and Appendix \ref{appendixqgeometricperspective} for more details.\\

The normalized quasi-period matrix $\Pi_{A}(\omega)^{-1}  \Pi_{A}(\eta)$ is the archetype of the so-called quasi-modular form that we shall define below.
Furthermore, later in Section \ref{secquasiJacobi}
 we shall see that it is
nothing but the degree zero term in the Laurent expansion of the Bergman kernel defined in a way similar to
\eqref{eqndefinitionofwp}.\\

We now briefly review some explicit formulae for the periods and quasi-periods of hyperelliptic curves in terms of theta constants.

Consider the sextic model in \eqref{eqnhyperellipticformfromcanonicalmap} or \eqref{eqnhyperellipticsextic}, with the marking chosen as before and the frame chosen in a way similar to the ones in
\eqref{eqnbasisforfirstkind} and \eqref{eqn2ndkindframe}
\cite{Baker:1907introduction}.
The quasi-period matrix $\Pi_{A}^{(6)}(\omega)^{-1}\Pi_{A}^{(6)}(\eta)$ for the case $a_{0}=1$ can be computed
explicitly in terms of the coefficients $a_{k}$ and theta constants \cite{Eilers:2016modular}
to be\footnote{The convention in \cite{Eilers:2016modular} is different from the one in \cite{Grant:1990formal} that we utilize
in this work, and we have fixed the constants arising from this difference.
}
\begin{eqnarray}\label{eqnausimodularintermsofthetaderivative}
&&\Pi^{(6)}_{A}(\omega)^{-1}\Pi^{(6)}_{A}(\eta)\nonumber\\
&=&-{1\over 10}
\begin{pmatrix}
4a_{4} & a_{3}\\
a_{3} & 4a_{2}
\end{pmatrix}
+{1\over 10}
\sum_{1\leq i<j\leq 5}
{\begin{pmatrix}
\partial^{2}_{u_1 u_1} &\partial^{2}_{u_1 u_2} \\
\partial^{2}_{u_2 u_1}  & \partial^{2}_{u_2 u_2}
\end{pmatrix}|_{u=0}
\vartheta_{(a,b)} \left( \Pi_{A}^{(6)}(\omega)^{-t}\cdot u\right)\over \vartheta_{(a,b)}|_{u=0} \left( \Pi_{A}^{(6)}(\omega)^{-t}\cdot u \right)}
\,,
\end{eqnarray}
where $(a,b)$ is the theta-characteristic corresponding to
$r_{i}+r_{j}-2r_{0}$.
Here and in what follows the superscript $(6)$ stands for constructions for the sextic model
and similarly $(5)$ for the quintic model when confusion might arise.
The above result for the case with $a_{0}=1$ can be used to recover those for general $a_{0}$ easily.
That the above quasi-period matrix transforms according to \eqref{eqntransformationofquasiperiods} can be proved directly
by working out the transformation of the derivatives of theta functions, as shown in \cite{Grant:1985theta}
which we have included in Appendix \ref{appendixquasiperiods} for completeness.
The expressions for the periods themselves can be obtained by inverting
 Thomae's formula which relates the branch points to certain theta constants.\\

Consider also the quintic model \eqref{eqngenustwomodelquintic} or  \eqref{eqnhyperellipticquintic} with $b_0=1$.
Based on Thomae formula it is proved in  \cite{Grant1988:generalization} that\footnote{
Note that in \cite{Grant1988:generalization}, it is assumed that
$b_{0}=1$ and
the basis $\omega$ therein is $2$ times that in this work.}

\begin{equation}\label{eqndiscriminantintermsoftheta}
\prod_{1\leq k<\ell\leq 5}(e_{k}-e_{\ell})^2=\pm \left(\det ({\Pi_{A}^{(5)}(\omega) \over \pi ^2})\right)^{-10}
\prod_{\nu \,\mathrm{even}}\vartheta^2_{\nu}(0,\tau)\,,
\end{equation}
where $\Pi_{A}^{(5)}(\omega)$
is the matrix of $A$-cycle integrals of the 1st kind Abelian differentials
as explained earlier in Section \ref{sechyperellipticJacobian}, and the product is over the $10$ even theta-characteristics.

Further relations between theta constants and branch points are also given by Rosenhain's and Thomae's formulae,
see for example \cite{Mumford:1983tata, Grant:1985theta, Grant1988:generalization, Buchstaber:1997, Enolski:2007periods, Eilers:2018rosenhain}.
The expressions for the periods and quasi-periods can be found in
\cite{Enolski:2007periods, Eilers:2018rosenhain} for this quintic model.
To be more precise, let
$\epsilon_{k}=\phi(e_{k})-\delta,
\epsilon_{k\ell}=\phi(e_{k})-\delta+\phi(e_{\ell})-\delta+\delta$
and $\theta_{k}=\theta_{\epsilon_{k}}(v,\tau)|_{v=0},
\theta_{k\ell}=\theta_{\epsilon_{k\ell}}(v,\tau)|_{v=0}$, etc.
Then for any $k\neq \ell\neq p,q,r$ from the set $\{1,2,3,4,5\}$
one has\footnote{Note that in \cite{Enolski:2007periods}, it is assumed that
$b_{0}=4$ and
the basis $\omega$ therein is $2$ times that in this work.
Also the convention for the period matrix is different from ours.} \cite{Enolski:2007periods}

\begin{equation}\label{eqnperiodforquintic}
\Pi_{A}^{(5)}(\omega)
=
{\theta_{pq}  \theta_{pr} \theta_{qr}
\over \theta_{k\ell}  (e_{k}-e_{\ell})^{3\over 2}}
\begin{pmatrix}
\partial_{v_{1}}|_{v=0}  \theta_{\ell}& \partial_{v_{1}}|_{v=0}  \theta_{k}\\
\partial_{v_{2}}|_{v=0}  \theta_{\ell} & \partial_{v_{2}}|_{v=0}  \theta_{k}
\end{pmatrix}\,.
\begin{pmatrix}
{1\over \theta_{p\ell} \theta_{q\ell} \theta_{r\ell}}& 0\\
0 & {1\over \theta_{pk} \theta_{qk} \theta_{rk}}
\end{pmatrix}
\begin{pmatrix}
1 & -e_{\ell}\\
-1 & e_{k}
\end{pmatrix}\,.
\end{equation}

Choosing $k=2,\ell=4$
and normalizing the curve such that $e_{2}=0,e_{4}=1$, this leads to the
Rosenhain normal form \eqref{eqnhyperellipticRosenhain} with
a modular representation for the period matrix. In particular, one has
\begin{equation}\label{eqnperiodforRosenhain}
\left(\det \Pi_{A}^{(5)}(\omega)\right)^{-1}
=-{1\over \pi^2} {\theta_{1100}\theta_{0010}\theta_{1001} \theta_{0000}\theta_{0001}\theta_{1111}\theta_{0100}
\over
(\theta_{0110}\theta_{1000}\theta_{0011})^{3}}\,,
\end{equation}
with
\begin{equation}
{(\theta_{1100}\theta_{0000}\theta_{1001} )^{4}
\over
(\theta_{0110}\theta_{1000}\theta_{0011})^{4}}
=\lambda_{1}\lambda_{2}\lambda_{3}\,.
\end{equation}
Here we have followed the convention for $\theta_{xyzw}$ as in
\eqref{eqnthetaconvention}.
Note that the relation displayed above differs from the one obtained  form
\eqref{eqnRosenhainlambda} by an action in $\Gamma(2)\backslash \Gamma(1)$.

\subsubsection{Sheaf-theoretic and analytic definitions of quasi-modular forms}

In this subsection we shall define quasi-modular forms, following the analytic and sheaf-theoretic formulation in \cite{Kaneko:1995, Urban:2014nearly, Liu:2016nearly}.

For this purpose we need some elementary representation-theoretic aspects of the vector bundles involved which we now briefly review, see
 \cite{VanderGeer:2008, Pitale:2015representations, Pitale:2015lowest, Klemm:2015direct} for related studies.
Vector bundles on $S=\Gamma\backslash\mathcal{H}$ in consideration descend from homogeneous vector bundles on
$\mathcal{H}=\mathrm{Sp}_{4}(\mathbb{R})/\mathbb{U}_{2}$. Sections of the former bundles
 are equivalent to $\Gamma$-invariant sections of the homogeneous vector bundles.
The basic building block for the ring of modular forms is the bundle
$\underline{\omega}$.
Its pull-back from $S=\Gamma\backslash\mathcal{H}$ to $\mathcal{H}$ is the homogeneous vector bundle
associated to the fundamental representation of $\mathbb{U}_{2}$.
One can then
construct various bundles by taking tensor product $\otimes^{k} \underline{\omega} ,k\geq 0$, symmetric tensor product $\mathrm{Sym}^{\otimes k }\underline{\omega} ,k\geq 0$, etc on the corresponding representations.
The $\Gamma$-invariant sections of these bundles are modular forms valued in these representations.
See Appendix \ref{appendix:reps} for related discussions.\\

To motivate the sheaf-theoretic definition of quasi-modular forms  \cite{Urban:2014nearly},
we consider sections of
$\mathrm{Sym}^{\otimes (k-m)}\underline{\omega}\otimes \mathrm{Sym}^{\otimes m} \mathcal{V}_{S}$
with $m\geq 0$.
Although the local system $R^{1}\pi_{*}\mathbb{C}_{\mathcal{Z}}$ does not extend across cusps
on the Satake compactification of $S$, the sheaf $
 \mathcal{V}_{S}$ does by Schmid's nilpotent orbit theorem \cite{Schmid:1973variation} or Deligne's canonical extension.
We focus on a cusp singularity where the $A$-cycles
 vanish at the fiber over the singularity and the local monodromy is given by the Dehn twist.
Denote $p: \mathcal{H}\rightarrow S=\Gamma\backslash \mathcal{H}$.
Then the natural frame for the lift along $p$ of the extended sheaf  $\mathcal{V}_{S}$
is given by $\{\omega= e=\beta \tau+\alpha ,\beta\}$ by Deligne's canonical extension.
This frame is the one used in the study of mixed Hodge structure and is induced by the monodromy weight filtration. We shall therefore call this frame the monodromy weight frame.
This frame
 satisfies $\Pi_{A}(\omega)=1$ and
 has the following transformation under the action $\gamma$ on the marking
\begin{equation}\label{eqntransformationofDeligneframe}
\begin{pmatrix}
e,
\beta
\end{pmatrix}
\mapsto
\begin{pmatrix}
e,
\beta
\end{pmatrix}
\begin{pmatrix}
(c\tau+ d)^{-1} &- c^t\\
0&( c \tau+d)^t
\end{pmatrix}\,.
\end{equation}

Consider the transformation of the coordinate of the lifted section $p^{*}s$ for the section $s$ of
$\mathrm{Sym}^{\otimes (k-m)}\underline{\omega}\otimes \mathrm{Sym}^{\otimes m} \mathcal{V}_{S}$ in terms of this
frame. The lift $p^{*}s$ is given by
\begin{equation}
p^{*}s=\sum_{a=0}^{m} f_{a} (\tau) e^{\otimes (k-a)}\otimes \beta^{\otimes a}\,.
\end{equation}
The transformation law for $\{f_{a}\}_{a=0,\cdots, m}$ is hence
\begin{equation}\label{eqnsheafquasitransformation}
f_{a}(\tau)=
\sum_{\ell= a}^{m}
{l\choose a} f_{\ell}(\gamma\tau)  ((c \tau+d)^{-1})^{\otimes (k-\ell)} (-c^t)^{\otimes (\ell-a)} (( c\tau+d)^t )^{\otimes a}\,.
\end{equation}

Recall that the Hodge filtration induces an exact sequence
\begin{equation}
0\rightarrow
\mathrm{Sym}^{\otimes k}
\underline{\omega}
\rightarrow
\mathrm{Sym}^{\otimes (k-m)}\underline{\omega}\otimes \mathrm{Sym}^{\otimes m} \mathcal{V}_{S}
\rightarrow
\mathrm{Sym}^{\otimes ((k-2)-(m-1))}\underline{\omega}\otimes \mathrm{Sym}^{\otimes (m-1)} \mathcal{V}_{S}
\rightarrow 0\,.
\end{equation}
The middle map
\begin{equation}
\mathrm{Sym}^{\otimes (k-m)}\underline{\omega}\otimes \mathrm{Sym}^{\otimes m} \mathcal{V}_{S}
\rightarrow
\mathrm{Sym}^{\otimes ((k-2)-(m-1))}\underline{\omega}\otimes \mathrm{Sym}^{\otimes (m-1)} \mathcal{V}_{S}
\end{equation}
is
induced by the trace map $\mathrm{Tr}:\underline{\omega} \otimes \mathcal{V}_{S}
\rightarrow\mathcal{O}_{S}
$ whose kernel is
 $\mathrm{Sym}^{\otimes k}
\underline{\omega}$.
It is easy to see  that the quotient of
\begin{equation}
\mathrm{Sym}^{\otimes (k-m)}\underline{\omega}\otimes \mathrm{Sym}^{\otimes m} \mathcal{V}_{S}
\end{equation}
by the first piece $\mathrm{Sym}^{\otimes k}
\underline{\omega}$ in the Hodge filtration is the map
 sending the data
$\{f_{a}\}_{a=0,1,\cdots,m}$
to
$\{f_{a}\}_{a=1,\cdots, m-1}$.

The monodromy weight frame also induces a natural projection
\begin{equation}
\mathrm{Sym}^{\otimes (k-m)} \underline{\omega}\otimes
\mathrm{Sym}^{\otimes m}\mathcal{V}_{S}\rightarrow \mathrm{Sym}^{\otimes k} \underline{\omega}\,.
\end{equation}
This is the map sending the data $\{f_{a}\}_{a=0,1\cdots, m}$ to its top component $f_{0}$.
One can check that the data $\{f_{a}\}_{a=0,1\cdots ,m}$ related by \eqref{eqnsheafquasitransformation} is equivalent to the top component $f_{0}$.
See  \cite{Urban:2014nearly} for details.\\

The above discussions yield the following definition of weakly holomorphic ``quasi-modular" forms,
borrowing the terminology from the genus one case \cite{Kaneko:1995}.

\begin{dfn}[Quasi-modular forms by sheaf]\label{dfnalgebraicdefinition}
A weakly holomorphic quasi-modular form of weight $k$ and order $m\geq 0$ is a holomorphic section of
$\mathrm{Sym}^{\otimes (k-m)}\underline{\omega}\otimes \mathrm{Sym}^{\otimes m} \mathcal{V}_{S}$.
\end{dfn}

By working with the monodromy weight frame $\{e,\beta\}$ of $\mathcal{V}_{S}$ adapted to the Hodge filtration
which transforms according to \eqref{eqntransformationofDeligneframe}, the above definition is equivalent to the following
analytic definition of weakly holomorphic quasi-modular forms
 in the spirit of \cite{Kaneko:1995}.
\begin{dfn}[Quasi-modular forms by transformation]\label{dfnanalyticdefinition}
A weakly holomorphic quasi-modular form of weight $k$ and order $m$ is a collection of holomorphic functions $\{f_{a}(\tau)\}_{a,=0,1\cdots, m}$ which
 satisfy the transformation law
\begin{equation}
f_{a}(\tau)=
\sum_{\ell= a}^{m}
{\ell \choose a} f_{\ell}(\gamma\tau)  ((c \tau+d)^{-1})^{\otimes (k-\ell)} (- c^t)^{\otimes (\ell-a)} (( c\tau+d)^t )^{\otimes a}\,.
\end{equation}
\end{dfn}

We remark that sections of the lifts to $\mathcal{H}$ of the above vector bundles on $S$
are
by definition well behaved under transformations by the modular group.
It is the
local frame, naturally singled out from
the monodromy weight filtration,  which ruins the coordinates presentations of the $\Gamma$-invariant sections of the vector bundles.\\

The above algebraic definition
given in Definition \ref{dfnalgebraicdefinition} and the analytic one in Definition \ref{dfnanalyticdefinition}  provide different
perspectives on the weakly holomorphic quasi-modular forms,
and we shall work with both throughout this paper.

\begin{ex}

A weakly holomorphic quasi-modular form with $m=0$ is a weakly holomorphic vector-valued modular form.
For example, a section of the  $\underline{\omega}$ is  a vector-valued modular form with $k=1,m=0$.
  The quantity $\eta =\alpha\Pi_{A}(\eta)+\beta \Pi_{B}(\eta)$ is a section of
$\mathrm{Sym}^{\otimes (k-m)}\underline{\omega}\otimes \mathrm{Sym}^{\otimes m} \mathcal{V}_{S}$
with $k=m=1$ and is a quasi-modular form.

To see this analytically,
 recall that
 the dual frame of $\{e,\beta\}$ is given by
 $\{A, B-\tau A\}$. In terms of this frame, the section $\omega$ is
given by
 \begin{equation}
 \omega=e\int_{A} \omega + \beta\int_{B-\tau A}\omega=e\cdot \Pi_{A}(\omega)\,.
 \end{equation}
That is, the coordinate of $\underline{\omega}$
in the frame $\{e\}$
is $\Pi_{A}(\omega)$ which is easily checked to be a modular form.
For the section $\eta$, one has
  \begin{equation}
 \eta=e\int_{A} \eta+ \beta\int_{B-\tau A}\eta=e\cdot \Pi_{A} (\eta)+\beta \cdot (\Pi_{B} (\eta)-\tau \Pi_{A} (\eta))\,.
 \end{equation}
 The coordinates are weakly holomorphic quasi-modular forms
whose transformation follows straightforwardly from
 \eqref{eqntransformationofquasiperiods}.
See Appendix \ref{appendixqgeometricperspective} for details.

\end{ex}

\subsubsection{Ring structure inherited from representations}

Let $M(\Gamma)$ be the graded ring of holomorphic scalar-valued modular forms for $\Gamma$, generated over $k$ by
 $\det^{\bullet\geq 0} \underline{\omega}$.
The ring $R(\Gamma)$ of meromorphic scalar-valued  modular forms
is defined to be the
fractional field of the ring $M(\Gamma)$.
It includes the functional field  of the projective closure of the variety $\Gamma\backslash \mathcal{H}$
since
rational functions on $S$ which constitute the field $k(S)$ correspond to
modular functions $k(\Gamma)$ (weight zero scalar-valued modular forms for $\Gamma$).

For the case in study where $\underline{\omega}$ has rank two corresponding to the fundamental
representation of $\mathbb{U}_{2}$, we have
$\wedge^{2} \underline{\omega}=\det \underline{\omega} $
whose sections are
scalar-valued modular forms of weight $1$.
Note that
\begin{equation}
\underline{\omega}^{\otimes 2}=\mathrm{Sym}^{\otimes 2}\underline{\omega}
\oplus \wedge^{2} \underline{\omega}\,.
\end{equation}
With the space of scalar-valued modular forms
well understood thanks to the work of \cite{Igusa:1962siegel},
working with $\mathrm{Sym}^{\otimes 2}\underline{\omega}$
is essentially equivalent to
working with $\underline{\omega}^{\otimes 2}$.
In fact, by the Weyl character formula, any holomorphic representation
decomposes into a direct sum of $\mathrm{Sym}^{k}\underline{\omega}$.
Therefore, the set of representations
$\{ \det^{\bullet}{\underline{\omega}} \otimes \mathrm{Sym}^{\bullet \geq 0}  \underline{\omega},\oplus,\otimes \}$ carries a module structure
over $\{\det^{\bullet}{\underline{\omega}},\oplus,\otimes\}$.
See \cite{VanderGeer:2008} for related discussions.\\

For the purpose of this work, we shall focus on a subclass of
weakly holomorphic modular forms and quasi-modular forms.

\begin{dfn}[Ring of modular forms by sheaf]\label{defringofmodularforms}
Let $R(\Gamma)$ be the ring of scalar-valued meromorphic modular forms for $\Gamma$.
Define a subring of modular forms to be the $R(\Gamma)$-module given by
\begin{equation}
\mathcal{R}(\Gamma)=R(\Gamma)[ \mathrm{Sym}^{\bullet\geq 0}H^{0}(S, \underline{\omega})]\,.
\end{equation}
\end{dfn}

From either the sheaf-theoretic or the analytic definition of quasi-modular forms, we can see
that the space of quasi-modular forms (including modular forms as a special case with $m=0$)
is a $R(\Gamma)$-module and
carries a graded ring structure, with the grading given by representations.

Recall that $\mathcal{V}_{S}$ corresponds to the trivial representation of
$\mathbb{U}_{2}$ and hence the decomposition of its tensor product
is also clear. We can then define the following ring of quasi-modular forms.

\begin{dfn}[Ring of quasi-modular forms by sheaf]\label{defringofquasimodularforms}
Let $R(\Gamma)$ be the ring of scalar-valued meromorphic modular forms for $\Gamma$.
Define the ring of quasi-modular forms to be the $\mathcal{R}(\Gamma)$-module given by
\begin{equation}
\widetilde{R}(\Gamma) =R(\Gamma)[\mathrm{Sym}^{\bullet\geq 0} H^{0}(S, \underline{\omega}),
 \mathrm{Sym}^{\bullet \geq 0} H^{0}(S,\mathcal{V}_{S})
]\,.
\end{equation}
\end{dfn}

The corresponding analytic definition
of quasi-modular forms is as follows.
\begin{dfn}\label{dfnringofquasimodularforms}[Ring of quasi-modular forms, analytic]
Define the ring of meromorphic quasi-modular forms to be
\begin{equation}
\widetilde{\mathcal{R}}(\Gamma)=R(\Gamma)[ \Pi_{A}(\omega),  \Pi_{A}(\eta)  ]\,.
\end{equation}

\end{dfn}
Here and in what follows, a notation  such as $\Pi_{A}(\omega)$ appearing in a ring stands for the set of its entries.
We have kept its matrix form as a book-keeping device to indicate that it is naturally valued in a representation.
Note that using our notation, $\Pi_{A}(\omega)^{\otimes k}$
represents entries of $\Pi_{A}(\omega)\otimes\cdots \otimes \Pi_{A}(\omega)$ instead of $\Pi_{A}(\omega)^{k}$, and it is the top component of a modular form valued in the representation $\mathrm{Sym}^{\otimes k}\underline{\omega}$
in the monodromy weight frame mentioned earlier.

\subsubsection{Almost-meromorphic modular forms}

By working with the real category
and choosing the frame $\{e,\bar{e}\}$ for $\mathcal{V}_{S}$, the notion of real-analytic modular forms can be defined.
See \cite{Katz:1976p} for details.
From its transformation law, it is easy to see that the quantity $(\tau-\bar{\tau})^{-1}$
is the coordinate description of a real-analytic modular form lying in
$\mathrm{Sym}(\underline{\omega})\otimes \mathrm{Sym}(\bar{\underline{\omega}})\otimes \mathcal{C}_{S}^{\infty}$ in the sheaf description.
In this work
we however shall not need this notion but only a more restricted one called almost-meromorphic modular forms \cite{Kaneko:1995}
or nearly holomorphic modular forms
\cite{Shimura:1986class, Shimura:1987, Urban:2014nearly}.\\

The discussions in Appendix \ref{appendixquasiperiods}
show that
one can make $\Pi_{A}(\omega)^{-1}\Pi_{A}({\eta})$ modular by adding a non-holomorphic term to it
\begin{equation}\label{eqnalmostholomorphicmodular}
\Pi_{A}(\omega)^{-1}\widehat{\Pi_{A}({\eta}) }:=
\Pi_{A}(\omega)^{-1}\Pi_{A}({\eta})+2\pi i \Pi_{A}(\omega)^{-1}(\tau-\bar{\tau})^{-1}\Pi_{A}(\omega)^{-t}\,.
\end{equation}

\begin{dfn}\label{dfnringofalmostholomorphicmodularforms}[Ring of almost-meromorphic modular forms, analytic]
Let the notations be as above. Define the ring of almost-meromorphic modular forms to be
\begin{equation}
\widehat{\mathcal{R}}(\Gamma)=R(\Gamma)[ \Pi_{A}(\omega), \widehat{\Pi_{A}({\eta}) } ]\,.
\end{equation}
\end{dfn}

The splitting of the Hodge filtration sequence defined by using Hodge decomposition induces the projection
\begin{equation}
\mathrm{Sym}^{\otimes (k-m)}\underline{\omega}\otimes \mathrm{Sym}^{\otimes m} \mathcal{V}_{S}
\rightarrow \mathrm{Sym}^{\otimes k}\underline{\omega}\otimes \mathcal{C}^{\infty}_{S}\,.
\end{equation}
One can show that it induces an isomorphism
between the ring $\widetilde{\mathcal{R}}$ and the ring
$
\widehat{\mathcal{R}}
$.
Therefore, the notions of quasi-modular forms
and almost-meromorphic modular forms are equivalent through the Hodge decomposition.
This is the generalization of
the map from quasi-modular forms to almost-meromorphic modular forms for the dimension one case.
The constant term map and modular completion for the genus one case \cite{Kaneko:1995} can be
defined either sheaf-theoretically or analytically in the same way.
Interested readers are referred to \cite{Kaneko:1995, Urban:2014nearly} for details.

\subsubsection{Differential structure from Picard-Vessiot extension
}
\label{secdifferentialstructure}

As mentioned in the previous section,
Definition  \ref{dfnalgebraicdefinition}  and
Definition \ref{dfnanalyticdefinition}
 of quasi-modular forms
offer both algebraic and analytic descriptions of quasi-modular forms.
Two immediate remarks are in order.
\begin{enumerate}

\item
Firstly, it is easy to check that the derivatives of a modular form of nonzero weight yield quasi-modular forms.
Ideally a ring of quasi-modular forms should be stable under differentiation $\partial_{\tau_{ij}},i,j=1,2$, similar to the genus one case \cite{Kaneko:1995}.
It is not clear from the definitions whether the ring $\widetilde{\mathcal{R}}(\Gamma)$  in Definition \ref{dfnringofquasimodularforms}
 is a differential ring.

\item
Secondly, it is not clear if the  ring $\widetilde{\mathcal{R}}(\Gamma)$ is  ``minimal"
in the sense that it contains no nontrivial sub-differential ring
that includes the ring $k(\Gamma)$ of modular functions.

\end{enumerate}
In this section we shall address these points.\\

First we shall show that the ring of quasi-modular forms as defined in Definition \ref{dfnringofquasimodularforms}
is closed under the differential $\partial_{\tau}$, where $\partial_{\tau}$
denotes collectively the partial derivatives $\partial_{\tau_{ij}}, i,j\in \{1,2\}$.
To be precise, we shall prove the following statements.

\begin{thm}[\cite{Bertrand:2000transcendence}]\label{thmdifferentialringofquasimodularforms}
Let $R=R(\Gamma), \widetilde{\mathcal{R}}=\widetilde{\mathcal{R}}(\Gamma)$ be the ring of scalar-valued meromorphic modular forms, the ring of meromorphic quasi-modular forms
 for $\Gamma$, respectively. Let
 $R(\Pi_{A}(\omega),  \Pi_{A}(\eta))$ be the fractional field of $\widetilde{\mathcal{R}}$. Then the following statements hold.
\begin{enumerate}\item
The fractional field $R(\Pi_{A}(\omega),  \Pi_{A}(\eta))$ of $\widetilde{\mathcal{R}}$ is stable under $\partial_{\tau}$.
\item
The fractional field $R(\Pi_{A}(\omega),  \Pi_{A}(\eta))$ of $\widetilde{\mathcal{R}}$ has transcendental degree $10$ over $\mathbb{C}$.

\item The presentation of the fractional field $R(\Pi_{A}(\omega),  \Pi_{A}(\eta))$ is the following. As a field extension of $R$,
the generators are the entries of $\Pi_{A}(\omega),\Pi_{A}(\eta)$, subject to the only relation given by the Riemann-Hodge bilinear relation
\begin{equation}\label{eqn:RiemanHodgebilinear1}
\left(\Pi_{A}(\omega)^{-1}\Pi_{A}(\eta)\right)^{t}
=
\Pi_{A}(\omega)^{-1}\Pi_{A}(\eta)
\,.
\end{equation}
\end{enumerate}
\end{thm}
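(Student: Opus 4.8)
The plan is to exploit the fact, recorded in equation \eqref{eqnetaisGaussManin}, that $\eta = \nabla_{v_0}\omega$ for a suitable $v_0 \in T_S$, so that the period and quasi-period matrices together form a fundamental solution matrix of the Gauss-Manin/Picard-Fuchs system attached to $\pi:\mathcal{Z}\to S$. Concretely, I would first set up the Gauss-Manin connection $\nabla$ on $\mathcal{V}_S = R^1\pi_*\mathbb{C}_{\mathcal{Z}}\otimes\mathcal{O}_S$ in the Hodge frame $\{\omega,\eta\}$; since the Kodaira-Spencer map \eqref{eqnKSmorphism} is an isomorphism, $\nabla$ in the $t$-coordinates (equivalently in the $\tau$-coordinates via the local isomorphism $\mathcal{H}\to S$) has connection matrix with entries that are \emph{algebraic} over $k(S)$ — this is the content of the classical theory and is exactly the Picard-Vessiot/differential-Galois setup of \cite{Bertrand:2000transcendence}. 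The matrix $\Pi$ of \eqref{eqnperiodmatrix} is then a fundamental solution matrix, and its defining ODE immediately shows that the ring $k(S)[\Pi_A(\omega),\Pi_A(\eta),\det\Pi_A(\omega)^{-1}]$ is closed under $\partial_\tau$: differentiating any entry of $\Pi_A(\omega)$ or $\Pi_A(\eta)$ produces an $k(S)$-algebraic combination of entries of $\Pi$ itself (one uses $\partial_\tau = (\partial_\tau t)\,\partial_t$ with $\partial_\tau t$ expressible through periods). This gives part (1); part (2) of the larger Theorem \ref{thmmainthm1} (stability under $\partial_t$) is then the same statement read in the other coordinate system.

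For the transcendence-degree statement (part 2), I would invoke the differential-Galois-theoretic result of \cite{Bertrand:2000transcendence}: the differential Galois group of the Picard-Fuchs system of the universal family of principally polarized abelian surfaces is as large as the Riemann-Hodge bilinear relations permit, namely (a form of) $\mathrm{Sp}_4$. The Picard-Vessiot extension of $k(S)$ generated by the entries of $\Pi$ therefore has transcendence degree equal to $\dim \mathrm{Sp}_4 = 10$ over $k(S)$ minus... no — rather, one counts directly: the $8$ entries of $\Pi_A(\omega),\Pi_A(\eta)$ (a $2\times 2$ and a $2\times 2$ block) are constrained only by the single symmetric relation \eqref{eqn:RiemanHodgebilinear1}, which is $\binom{2}{2}=1$ equation on the off-diagonal symmetry — wait, symmetry of a $2\times 2$ matrix is one equation — so one expects $8-1 = 7$ algebraically independent period/quasi-period entries, and adding the three $t_i$ (equivalently the $\tau_{ij}$, three of them) gives $7+3 = 10$. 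The role of \cite{Bertrand:2000transcendence} is precisely to guarantee that there are \emph{no} further algebraic relations, i.e. that the naive count is exact; this is the place where the big-monodromy/big-Galois-group input is essential and is the main technical obstacle I would need to cite rather than reprove.

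For the presentation statement (part 3), the containment "the listed relation holds" is immediate from the Riemann-Hodge bilinear relations \eqref{eqnRiemannHodgePi} for the family: the first bilinear relation says $\Pi^t J \Pi$ has the appropriate block-symmetry, and extracting the relevant block yields exactly $(\Pi_A(\omega)^{-1}\Pi_A(\eta))^t = \Pi_A(\omega)^{-1}\Pi_A(\eta)$ (one can derive this directly, or quote \eqref{eqntransformationofquasiperiods} and Appendix \ref{appendixquasiperiods}). The harder direction — that this is the \emph{only} relation — follows from the transcendence-degree computation in part (2): the polynomial ring $R[\Pi_A(\omega),\Pi_A(\eta)]$ modulo the ideal generated by \eqref{eqn:RiemanHodgebilinear1} has exactly the right dimension, so any further relation would drop the transcendence degree below $10$, contradicting part (2). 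Thus parts (2) and (3) are logically intertwined and both rest on the differential-Galois input of \cite{Bertrand:2000transcendence}; I would present (1) first as a self-contained computation with the Gauss-Manin connection, then (2), then deduce (3) as a corollary. The main obstacle is packaging the result of \cite{Bertrand:2000transcendence} into exactly the normalization (Hodge frame, convention \eqref{eqnperiodmatrix}) used here and checking that the single relation \eqref{eqn:RiemanHodgebilinear1} is the \emph{complete} list of Galois-group-imposed constraints in this frame — equivalently, that the second Riemann bilinear relation imposes no \emph{algebraic} constraint (it is the source of non-modularity, not of an algebraic dependence).
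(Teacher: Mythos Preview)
Your proposal is correct and follows essentially the same strategy as the paper's proof: both set up the Gauss--Manin/Picard--Fuchs system in the Hodge frame, use $\eta=\nabla_{v_0}\omega$ to identify $k(t)(\Pi_A(\omega),\Pi_A(\eta))$ with $k(t)(\pi,\partial_t\pi)$, invoke the differential-Galois input from \cite{Bertrand:2000transcendence}, and deduce part~(3) by saturating a dimension count against the Riemann--Hodge relation.

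The one place where the paper is more careful than your sketch is the passage from the \emph{full} Picard--Vessiot extension $\mathcal{E}'=k(t)_{\partial_t}\langle\Pi_A(\omega),\Pi_B(\omega),\Pi_A(\eta),\Pi_B(\eta)\rangle$ (which is what \cite{Bertrand:2000transcendence} actually controls, giving $\operatorname{trdeg}_{k(t)}\mathcal{E}'=10$) down to the subfield $\mathcal{E}=k(t)(\Pi_A(\omega),\Pi_A(\eta))$. Your count $8-1+3=10$ is only an \emph{upper} bound for $\operatorname{trdeg}_{\mathbb{C}}\mathcal{E}$; the Galois-group statement bounds $\mathcal{E}'$, not $\mathcal{E}$, from below. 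The paper bridges this by writing $\mathcal{E}'=\mathcal{E}(\tau)$ and arguing that $\mathcal{E}$ embeds into Fourier series in $e^{2\pi i\tau_{ij}}$, hence is linearly disjoint from $k(\tau)$, so $\operatorname{trdeg}_{\mathcal{E}}\mathcal{E}'=3$ and $\operatorname{trdeg}_{\mathbb{C}}\mathcal{E}=13-3=10$. (Equivalently, one can argue as you implicitly do: the $4+3+3$ generators of $\mathcal{E}'$ over $k(t)$ saturate the bound $10$, hence are algebraically independent, so the $7$ among them that lie in $\mathcal{E}$ are too.) The paper also makes explicit the identification $R(\Pi_A(\omega),\Pi_A(\eta))=k(t)(\Pi_A(\omega),\Pi_A(\eta))$ via $\det\pi\in M$ (Steps~2 and~5), which you use implicitly when working over $k(S)$ rather than $R$. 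Neither of these is a genuine gap in your plan, but both are worth stating rather than absorbing into ``packaging the result of \cite{Bertrand:2000transcendence}.''
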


The proof of Theorem \ref{thmdifferentialringofquasimodularforms} is essentially given in \cite{Bertrand:2000transcendence} (see also \cite{Zudilin2000:thetanulls}) which was used to prove the following statement.
Let $(K,\partial)$ be a differential field such that
the field of constant under $\partial$ is $k=\mathbb{C}$.
Let $L$ be another field extension of $k$.
Denote by
$K_{\partial}\langle L\rangle$ the differential field obtained by adjoining to $K$ all
$\partial$-differentials of elements from $L$.

As before, we take a set of three algebraically independent generators $\{t_1,t_2,t_3\}$ from $k(S)=k(\Gamma)$ and denote them
 collectively by $t$.
Hence up to algebraic extension $k(S)$ coincides with $k(t_1,t_2,t_3)$.
Denote
the differentials $\{\partial_{\tau_{ij}}, i,j,=1,2\}$ collectively by $\partial_{\tau}$
and similarly for $\partial_{t}$.

\begin{thm}[\cite{Bertrand:2000transcendence}]\label{thmdifferentialGaloisfield}
Consider the differential field
\begin{equation}
\mathcal{D}:=
k_{\partial_{\tau}} \langle k(S)\rangle\,,\quad
\quad
 k=\mathbb{C}\,,
\end{equation}
which up to algebraic extension coincides with
$\mathcal{F}:=k_{\partial_{\tau}}\langle  t \rangle$ defined by adjoining all
$\partial_{\tau}$-derivatives of the generators $t=\{t_1,t_2,t_3\}$ of $k(t)$ to the field $k$.
The differential field $\mathcal{F}$, and hence $\mathcal{D}=k_{\partial_{\tau}} \langle k(S)\rangle$,
is a finite extension of the field generated over $k(t)$  by the $\tau$-derivatives of $t$ of order $\leq 2$.
It has transcendental degree $10$ over $k$.
\end{thm}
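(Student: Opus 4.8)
The plan is to reduce both assertions to statements about the periods and quasi-periods of the Gauss--Manin connection and then invoke differential Galois theory; since $k(S)$ is a finite algebraic extension of $k(t)=k(t_1,t_2,t_3)$ and differential closures are insensitive to algebraic extensions, it suffices to treat $\mathcal{F}=k_{\partial_\tau}\langle t\rangle$. Two structural inputs are at hand. First, the Kodaira--Spencer isomorphism $\Omega^1_S\cong\mathrm{Sym}^{\otimes 2}\underline{\omega}$ of \eqref{eqnKSmorphism}, together with the choice $\eta=\nabla_{v_0}\omega$ of \eqref{eqnetaisGaussManin}, makes the Gauss--Manin connection on $\mathcal{V}_S$, expressed in the Hodge frame $\{\omega,\eta\}$, a closed first-order linear system with coefficients in $\Omega^1_{k(t)}$; dually, each $\partial_{t_j}$ of an entry of $\Pi_A(\omega)$ or of $\Pi_A(\eta)$ is a $k(t)$-linear combination of entries of $\Pi_A(\omega)$ and $\Pi_A(\eta)$. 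Second, the $t_i$ are weight-zero modular functions, so $\partial_\tau t_i$ and $\partial_\tau^2 t_i$ are, respectively, modular and quasi-modular forms lying in $k(t)(\Pi_A(\omega),\Pi_A(\eta))$, and conversely, by Theorem \ref{thmmainthm1}(2)--(3) and Theorem \ref{thmdifferentialringofquasimodularforms}, the entries of $\Pi_A(\omega)$ and $\Pi_A(\eta)$ are, up to algebraic extension, generated over $k(t)$ by the entries of $\partial_\tau t$ and $\partial_\tau^2 t$, subject only to the first Riemann--Hodge bilinear relation \eqref{eqn:RiemanHodgebilinear1}.

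From these two inputs the finiteness assertion follows. Writing $\partial_{\tau_{jk}}=\sum_i(\partial t_i/\partial\tau_{jk})\,\partial_{t_i}$, the coefficients $\partial t_i/\partial\tau_{jk}$ are entries of $\partial_\tau t$ and hence already lie in $k(t)(\Pi_A(\omega))$; combined with the first-order system for $\Pi_A(\omega),\Pi_A(\eta)$ this shows that $k(t)(\Pi_A(\omega),\Pi_A(\eta))$ is stable under $\partial_\tau$. It therefore contains all $\partial_\tau$-derivatives of the $t_i$ of every order, so up to algebraic extension $\mathcal{F}=k(t)(\Pi_A(\omega),\Pi_A(\eta))=k(t)(\partial_\tau t,\partial_\tau^2 t)$; in particular $\mathcal{F}$ is a finite extension of the field generated over $k(t)$ by the $\tau$-derivatives of $t$ of order $\le 2$, hence of $k(t,\partial_\tau t,\partial_\tau^2 t)$.

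For the transcendence degree, the bound $\mathrm{tr.deg}_{\mathbb{C}}\mathcal{F}\le 10$ is a count of generators of $k(t)(\Pi_A(\omega),\Pi_A(\eta))$: the three $t_i$, the four entries of $\Pi_A(\omega)$, and --- writing $\Pi_A(\eta)=\Pi_A(\omega)\,N$ with $N:=\Pi_A(\omega)^{-1}\Pi_A(\eta)$ symmetric by \eqref{eqn:RiemanHodgebilinear1} --- the three independent entries of $N$, giving at most $3+4+3=10$ (the genus-two analogue of $\mathrm{tr.deg}_{\mathbb{C}}\mathbb{C}(E_2,E_4,E_6)=3=\dim\mathrm{SL}_2(\mathbb{C})$). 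For the reverse inequality I would argue Galois-theoretically. The Gauss--Manin connection of the universal family $\pi:\mathcal{Z}\to S$ is regular singular, so the differential Galois group of its Picard--Vessiot extension $P$ over $(k(t),\partial_t)$ equals the Zariski closure of the image of the monodromy representation $\pi_1(S)\to\mathrm{Sp}_4(\mathbb{Z})$; this image is (commensurable with) the lattice $\Gamma$, hence Zariski dense in $\mathrm{Sp}_4(\mathbb{C})$ by Borel density, while the group is a priori contained in $\mathrm{Sp}_4$ because $\nabla$ preserves the polarization pairing \eqref{eqndeRhampairing}, so the Galois group is exactly $\mathrm{Sp}_4(\mathbb{C})$. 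By the differential Galois correspondence $\mathrm{tr.deg}_{k(t)}P=\dim\mathrm{Sp}_4(\mathbb{C})=10$, hence $\mathrm{tr.deg}_{\mathbb{C}}P=3+10=13$. Now $P$ is generated over $k(t)$ by the entries of the full period matrix $\Pi=\begin{pmatrix}\Pi_B(\omega)&\Pi_B(\eta)\\ \Pi_A(\omega)&\Pi_A(\eta)\end{pmatrix}$; the second Riemann--Hodge (Legendre) relation expresses $\Pi_B(\eta)$ as a $k(t)$-rational function of $\Pi_A(\omega),\Pi_A(\eta),\Pi_B(\omega)$, and $\Pi_B(\omega)=\tau\,\Pi_A(\omega)$, so $P=\mathcal{F}(\tau_{11},\tau_{12},\tau_{22})$ up to algebraic extension. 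Hence $13=\mathrm{tr.deg}_{\mathbb{C}}P\le\mathrm{tr.deg}_{\mathbb{C}}\mathcal{F}+3$, so $\mathrm{tr.deg}_{\mathbb{C}}\mathcal{F}\ge 10$, and combining the bounds gives $\mathrm{tr.deg}_{\mathbb{C}}\mathcal{F}=10$ (with the $\tau_{ij}$ algebraically independent over $\mathcal{F}$ as a by-product).

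The step I expect to cost the most is the interface between the two differential structures at play: $\mathcal{F}$ is built from $\partial_\tau$-derivatives of the transcendental functions $t_i(\tau)$, whereas the Picard--Vessiot/monodromy picture lives most naturally with $\partial_t$ on the algebraic base $S$, and one must verify carefully that moving between them creates no hidden algebraic relations --- equivalently, that $\mathcal{F}$ really is $P$ with exactly the three transcendentals $\tau_{ij}$ deleted, and that the first-order Gauss--Manin system genuinely closes the $\partial_\tau$-differential ring at order two. Confirming that the differential Galois group is the full $\mathrm{Sp}_4(\mathbb{C})$ rather than a proper reductive subgroup is the other delicate point, though arithmeticity of $\Gamma$ makes it routine. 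For both I would follow \cite{Bertrand:2000transcendence} closely, with \cite{Zudilin2000:thetanulls} providing an alternative route through systems of PDEs for theta constants.
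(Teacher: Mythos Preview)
Your overall approach is the same as the paper's: identify $\mathcal{F}$ (up to algebraic extension) with $k(t)(\Pi_A(\omega),\Pi_A(\eta))$, use the Picard--Vessiot extension for the Gauss--Manin connection together with the fact that its differential Galois group is $\mathrm{Sp}_4(\mathbb{C})$ to get $\mathrm{tr.deg}_{\mathbb{C}}P=13$, and then deduce $\mathrm{tr.deg}_{\mathbb{C}}\mathcal{F}=10$ by removing the three $\tau_{ij}$. The upper bound via the count $3+4+3$ with the first Riemann--Hodge relation, the $\partial_\tau$-stability of $k(t)(\Pi_A(\omega),\Pi_A(\eta))$, and the passage $P=\mathcal{F}(\tau)$ are all exactly what the paper does.

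There is one genuine gap. For the reverse inclusion --- that the entries of $\Pi_A(\omega)$ and $\Pi_A(\eta)$ lie in $(k(t)(\partial_\tau t,\partial_\tau^2 t))^{\mathrm{alg}}$ --- you invoke Theorem~\ref{thmmainthm1}(2)--(3). But part (2) of that theorem \emph{is} the statement you are proving (it is the introduction's summary of Theorem~\ref{thmdifferentialGaloisfield}), so this is circular. Without this inclusion you cannot conclude $P=\mathcal{F}(\tau)$, and hence your lower bound $\mathrm{tr.deg}_{\mathbb{C}}\mathcal{F}\ge 10$ collapses. The paper's actual argument is one you already have the ingredients for: since Kodaira--Spencer is an isomorphism for the universal family, the entries of $\partial_\tau t$ form a basis of the $\mathrm{Sym}^{\otimes 2}\underline{\omega}$-valued modular forms over $k(t)$, so every quadratic monomial in the entries of $\pi=\Pi_A(\omega)$ already lies in $k(t)(\partial_\tau t)$; hence $\pi$ is algebraic over $k(t)(\partial_\tau t)$ (this is \eqref{eqnpiisincluded}). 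Differentiating once more and using $\eta=\nabla_{v_0}\omega$ then gives $\Pi_A(\eta)$ algebraic over $k(t)(\partial_\tau t,\partial_\tau^2 t)$. Replace your citation by this short argument and the proof is complete.

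A minor stylistic difference: to separate $\mathcal{F}$ from $P$ the paper argues that $\mathcal{E}=k(t)(\Pi_A(\omega),\Pi_A(\eta))$ embeds into convergent series in $e^{2\pi i\tau_{ij}}$ and is therefore linearly disjoint from $k(\tau)$, while you use the inequality $13\le \mathrm{tr.deg}_{\mathbb{C}}\mathcal{F}+3$. Both are valid; yours is slightly slicker once the circularity above is repaired.
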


The idea of the proofs is to translate the problem into a problem on period integrals, and then
utilize differential Galois theory.
Our proof of Theorem \ref{thmdifferentialringofquasimodularforms}
is a little more Hodge-theoretic reformulation of the proof of Theorem \ref{thmdifferentialGaloisfield} given in \cite{Bertrand:2000transcendence}.

\begin{proof}[Proof of Theorem \ref{thmdifferentialringofquasimodularforms}]

We prove this theorem in several steps.

\paragraph{Step 1. Picard-Vessiot extension.}
Let
\begin{equation}
\mathcal{E}':
=k(t)_{ \partial_{t}}
\langle \Pi_{A}(\omega),  \Pi_{B}(\omega),\Pi_{A}(\eta),\Pi_{B}(\eta) \rangle  \,
\end{equation}
be the Picard-Vessiot extension corresponding to the Picard-Fuchs system for the family $\pi:\mathcal{Z}\rightarrow S=\Gamma\backslash\mathcal{H}$.
Here again the notation $\Pi_{A}(\omega)$ denotes collectively its entries and similar notations are used for others.
The notation $k(t)_{\partial_{t}}\langle \Pi_{A}(\omega),  \Pi_{B}(\omega),\Pi_{A}(\eta),\Pi_{B}(\eta)\rangle$
denotes the differential closure of $k(t)( \Pi_{A}(\omega),  \Pi_{B}(\omega),\Pi_{A}(\eta),\Pi_{B}(\eta))$.
By differential Galois theory, the field $\mathcal{E}'$
has transcendental degree
$10$ over $k(t)$ and hence
transcendental degree $10+3$ over $k$.

Let
\begin{equation}
\mathcal{E}:
=k(t)_{ \partial_{t}}
\langle \Pi_{A}(\omega),  \Pi_{A}(\eta) \rangle  \,.
\end{equation}
Due to the universality of the family in \eqref{eqnetaisGaussManin}, it follows that
\begin{equation}
\mathcal{E}
=
k(t)_{ \partial_{t}}
\langle \Pi_{A}(\omega)\rangle
\subseteq
\mathcal{E}'
=
k(t)_{ \partial_{t}}
\langle \Pi_{A}(\omega),  \Pi_{B}(\omega)\rangle
\,.
\end{equation}
By the fact that the Picard-Fuchs system is of order two, we have
\begin{equation}
\mathcal{E}
=
k(t)(
 \Pi_{A}(\omega),\partial_{t}\Pi_{A}(\omega))
\subseteq
\mathcal{E}'
=k(t)
( \Pi_{A}(\omega), \Pi_{B}(\omega) ,\partial_{t} \Pi_{A}(\omega), \partial_{t} \Pi_{B}(\omega))\,.
\end{equation}
Denote for simplicity $\pi= \Pi_{A}(\omega)$ which is nondegenerate with
$ \Pi_{B}(\omega)= \tau \pi$. Then
\begin{eqnarray}\label{eqnfieldsEEprimeintermsofpi}
&&\mathcal{E}
=k(t)_{\partial_{t}}\langle \pi\rangle
=
k(t)(
 \pi,\partial_{t}\pi)\,,\nonumber\\
&&\mathcal{E}'=k(t)_{ \partial_{t}}
\langle \pi ,\tau \rangle =
k(t)(
\pi, \tau, \partial_{t} \pi, \partial_{t} (  \tau \pi))
=k(t) (
\pi, \tau, \partial_{t} \pi, \partial_{t} \tau )\,.
\end{eqnarray}

\paragraph{Step 2. Properties of $\partial_{\tau}t$ and $\pi$.}
The quantity $\pi$ is a modular form valued in the fundamental representation of $\mathbb{U}_{2}$
. This implies that $\det\pi$ lies in the graded ring $M$ of holomorphic scalar-valued modular forms
\begin{equation}
\det \pi\in M\,.
\end{equation}
Therefore, $k(t)(\pi)$ contains a weight-one scalar-valued modular form
and hence the fractional ring $R$ of the ring $M$.
That is
\begin{equation}\label{eqnRisincluded}
M\subseteq R\subseteq k(t)(\det \pi)\subseteq k(t)(\pi)\,.
\end{equation}

Furthermore, it is easy to check that  for any modular function $f\in k(t)$,
$d_{\tau} f\in \mathrm{Sym}^{\otimes 2} \underline{\omega}$ and
$\pi^{-1}\cdot (\partial_{\tau_{ij}})  f\cdot \pi^{-t} $ is a (matrix-valued) modular function. 
This tells that
\begin{equation}
\partial_{\tau}f\in k(t)(\pi)\,,
\quad \forall f\in k(t)\,.
\end{equation}
In particular one can take $f$ to be one of the generators $t_{1},t_{2},t_{3}$.
This leads to
\begin{equation}\label{eqnpartialtautincluded}
\partial_{\tau}t\in k(t)(\pi)\,.
\end{equation}
Similar to \eqref{eqnRisincluded}, we have
\begin{equation}\label{eqnRisincluded2}
M\subseteq R\subseteq \left( k(t)(\det \partial_{\tau}f)\right)^{\mathrm{alg}}\subseteq \left(k(t)(\partial_{\tau}f)\right)^{\mathrm{alg}}\,, \quad \forall f\in k(t)\,.
\end{equation}
Here for a field $K$, $K^{\mathrm{alg}}$ stands for its algebraic closure.

\paragraph{Step 3. Eliminating $\partial_{\tau}t$.}
According to \eqref{eqnpartialtautincluded}, we know that $\partial_{\tau}t\in \mathcal{E}\subseteq \mathcal{E}'$.
It follows from \eqref{eqnfieldsEEprimeintermsofpi}  that
\begin{equation}
\mathcal{E}
=k(t)_{\partial_{t}}\langle \pi\rangle
=k(t)_{\partial_{\tau}}\langle \pi\rangle\,,
\quad
\mathcal{E}'
=k(t)_{\partial_{t}}\langle \pi, \tau\rangle
=k(t)_{\partial_{\tau}}\langle \pi,\tau\rangle\,.
\end{equation}
We can also get rid of the generators $\partial_{\tau}t$ to obtain
\begin{eqnarray}\label{eqndifferentialGaloisinpartialtau}
 \mathcal{E}'
=k(t)
(\pi, \tau, \partial_{t} \pi)
=k(t) (
\pi, \tau, \partial_{\tau} \pi)
\,.
\end{eqnarray}
By definition we have $\partial_{t} \Pi_{A}(\omega)=\Pi_{A} (\nabla_{\partial_{t}} \omega)$,
with
$\nabla_{\partial_{t}} \omega$ being a $\mathcal{O}_{S}$-linear combination of
$\omega, \eta$.
Since $\mathcal{O}_{S}\subseteq R$, from \eqref{eqnfieldsEEprimeintermsofpi}, \eqref{eqndifferentialGaloisinpartialtau} we have
\begin{equation}\label{eqnEEprimequasiperiod}
\mathcal{E}
=k(t)
(\pi,   \partial_{t} \pi )=k(t)
(\Pi_{A}(\omega),   \Pi_{A}(\eta) )
\,,
\quad
\mathcal{E}'
=k(t) (
\Pi_{A}(\omega), \tau, \Pi_{A}(\eta))\,.
\end{equation}

\paragraph{Step 4. Computing transcendental degrees.}

By the Riemann-Hodge bilinear relations \eqref{eqn:RiemanHodgebilinear1}, the number
of algebraically independent entries in $\Pi_{A}(\omega), \tau, \Pi_{A}(\eta)$
is at most $4+3+3=10$.
According to the presentation in \eqref{eqnEEprimequasiperiod},
that $\mathcal{E}'$ has transcendental degree $13$ over $k$ immediately tells that
these generators are in fact algebraically independent.

The differential field  $\mathcal{E}$ is embedded into the fractional field of the ring
of convergent series in $e^{2\pi i \tau_{ij}},i,j\in \{1,2\}$ which is linearly disjoint from $k(\tau)$.
Hence
$\mathcal{E}'$ has  nonzero transcendental degree $3$ over $\mathcal{E}$.
Therefore $\mathcal{E}$ has transcendental degree $13-3=10$ over $k$.
Clearly this has generators
being $t_{1},t_{2},t_{3}$ and the entries of $\Pi_{A}(\omega),\Pi_{A}(\eta)$, subject to the only relation given by the Riemann-Hodge bilinear relation \eqref{eqn:RiemanHodgebilinear1}.

\paragraph{Step 5. Differential structure of the fractional field.}
Clearly
\begin{eqnarray*}
k(t)\subseteq  R\subseteq \widetilde{\mathcal{R}}
\subseteq R(\Pi_{A}(\omega), \Pi_{A}(\eta) )\,.
\end{eqnarray*}
From the relation $R\subseteq k(t)(\Pi_{A}(\omega))$ in  \eqref{eqnRisincluded}
we have
\begin{eqnarray*}
 R(\Pi_{A}(\omega), \Pi_{A}(\eta) )
\subseteq k(t)(\Pi_{A}(\omega), \Pi_{A}(\eta))\,.
\end{eqnarray*}
Combing the above two we obtain
\begin{eqnarray*}
 R(\Pi_{A}(\omega), \Pi_{A}(\eta) )
=k(t)(\Pi_{A}(\omega), \Pi_{A}(\eta))\,.
\end{eqnarray*}
From \eqref{eqnEEprimequasiperiod}, the right hand side of the above is
\begin{eqnarray*}
 k(t)(\Pi_{A}(\omega), \Pi_{A}(\eta))=\mathcal{E}=k(t)
(\Pi_{A}(\omega),   \partial_{t} \Pi_{A}(\omega))\,.
\end{eqnarray*}
Therefore, the fractional field $ R(\Pi_{A}(\omega), \Pi_{A}(\eta) )$ of the ring
$\widetilde{\mathcal{R}}$ is isomorphic to $\mathcal{E}$
and hence has transcendental degree $10$ over $k$.
This finishes the proof.
 \end{proof}

In fact, due to the universality of the family $\pi:\mathcal{Z}\rightarrow S$, the entries $\partial_{\tau}t$ form a basis
for modular forms valued in $\Omega_{S}^{1}\cong \mathrm{Sym}^{\otimes 2}\underline{\omega}$. That is,
all binomials in the entries of $\pi$
lie in the field $R(\partial_{\tau}t)$.
This says that $\pi$ is algebraic over $R(\partial_{\tau}t)$.
That is,
 \begin{equation}\label{eqnpiisincluded}
\pi \in  \left(R(\partial_{\tau}t)\right)^{\mathrm{alg}} \,,
\end{equation}
where $\left(R(\partial_{\tau}t)\right)^{\mathrm{alg}}$ denotes the algebraic closure of $R(\partial_{\tau}t)$.
Combining \eqref{eqnRisincluded2} and \eqref{eqnpiisincluded}, we obtain
\begin{equation}\label{eqnincludingpiinalgebraicclosure}
 \pi \in \left(R(\partial_{\tau}t)\right)^{\mathrm{alg}}
 \subseteq \left(k(t) (\partial_{\tau} t) \right)^{\mathrm{alg}} \,.
\end{equation}
This implies that
\begin{equation}\label{eqnpartialtpiisincluded}
\partial_{t}\pi\in  \left(k(t) ( \partial_{\tau} t,  \partial_{\tau}^2 t)\right)^{\mathrm{alg}}\subseteq \left(k_{\partial_{\tau}} \langle t\rangle \right)^{\mathrm{alg}}\,.
\end{equation}
The two relations \eqref{eqnpiisincluded}, \eqref{eqnpartialtpiisincluded} tell that the $\partial_{\tau}$-stable field
$\mathcal{E}$ satisfies
\begin{equation}
\mathcal{F}= k_{\partial_{\tau}} \langle t\rangle \subseteq \mathcal{E}\
=k(t)_{\partial_{\tau}}\langle \pi\rangle
=k(t)_{\partial_{t}}\langle \pi\rangle
 \subseteq \mathcal{F}^{\mathrm{alg}}\,.
\end{equation}
This proves that $\mathcal{F}$
has the same transcendental degree with $\mathcal{E}$
and
is in fact a finite extension of the field generated over $k(t)$  by the $\tau$-derivatives of $t$ of order $\leq 2$.
Since $k(S)$ is a finite extension of $k(t)$,
by the proof for Theorem \ref{thmdifferentialringofquasimodularforms}
both of  $\mathcal{F}$ and $k_{\partial_{\tau}}(k(S))$ have transcendental degree $10$.
This proves Theorem \ref{thmdifferentialGaloisfield}.

\begin{rem}
In the proof of Theorem \eqref{thmdifferentialringofquasimodularforms} we proved $\Pi_{A}(\omega)$ is modular by looking at its transformation property.
Another argument provided in \cite{Bertrand:2000transcendence, Zudilin2000:thetanulls} is
to firstly pick a particular family (with modular group $\Gamma_{4,8}$) and
 explicit  basis $\omega$ constructed by theta functions, and then prove the statement by direct checking.
Again
an algebraic extension does not affect any of the
discussions used in the proof.
The relation in \eqref{eqnpiisincluded} is shown similarly.
Interesting computations on the presentation of the differential field
 for the particular subgroup  $\Gamma=\Gamma_{4,8}$ can be found in \cite{Bertrand:2000transcendence}.

\end{rem}

\begin{cor}\label{cordifferentialringofquasimodularforms}
The following statements hold for the rings
\begin{equation}\label{eqnormalizedrings}
\widetilde{\mathcal{R}}=R[\Pi_{A}(\omega),   \Pi_{A}(\eta)]\,,
\quad
\widetilde{\mathcal{R}}^{o}:=R[\Pi_{A}(\omega), \Pi_{A}(\omega)^{-1}\Pi_{A}(\eta)]
\end{equation}

\begin{enumerate}
\item
They are stable under $\partial_{t}$ and $\partial_{\tau}$.
\item
Their fractional fields have transcendental degree $10$ over $\mathbb{C}$.

\item The presentation of the former ring $\widetilde{\mathcal{R}}$ is the following. Over the ring $R$,
the generators are  the entries of $\Pi_{A}(\omega),\Pi_{A}(\eta)$, subject to the only relation given by the Riemann-Hodge bilinear relation
\begin{equation}
\Pi_{A}(\omega) \Pi_{A}(\eta)^{t}
=
\Pi_{A}(\eta) \Pi_{A}(\omega)^{t}
\,.
\end{equation}
The presentation of the latter ring $\widetilde{\mathcal{R}}^{o}$ is the following. Over the ring $R$,
the generators the entries of $\Pi_{A}(\omega),\Pi_{A}(\omega)^{-1}\Pi_{A}(\eta)$, subject to the only relation given by the Riemann-Hodge bilinear relation
\begin{equation}
\Pi_{A}(\omega)^{-1}\Pi_{A}(\eta)
=
\left(\Pi_{A}(\omega)^{-1}\Pi_{A}(\eta)\right)^{t}
\,.
\end{equation}
\end{enumerate}

\end{cor}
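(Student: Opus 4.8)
The plan is to deduce everything from Theorem \ref{thmdifferentialringofquasimodularforms} and the Gauss--Manin structure already used in its proof, keeping careful track of the distinction between rings and their fraction fields. First I would note that $\det\Pi_{A}(\omega)=\det\pi$ is a nonzero holomorphic scalar-valued modular form, hence a unit of the field $R$; therefore $\Pi_{A}(\omega)^{-1}=(\det\pi)^{-1}\,\mathrm{adj}\,\Pi_{A}(\omega)$ already has entries in $R[\Pi_{A}(\omega)]$, and this gives $\widetilde{\mathcal{R}}=\widetilde{\mathcal{R}}^{o}$: on one side $\Pi_{A}(\omega)^{-1}\Pi_{A}(\eta)\in\widetilde{\mathcal{R}}$, on the other $\Pi_{A}(\eta)=\Pi_{A}(\omega)\cdot(\Pi_{A}(\omega)^{-1}\Pi_{A}(\eta))\in\widetilde{\mathcal{R}}^{o}$. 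So it suffices to prove (1)--(3) for $\widetilde{\mathcal{R}}$; the versions for $\widetilde{\mathcal{R}}^{o}$ follow by rewriting the same ring in the generators $\Pi_{A}(\omega),\ \Pi_{A}(\omega)^{-1}\Pi_{A}(\eta)$, under which the Riemann--Hodge relation $\Pi_{A}(\omega)\Pi_{A}(\eta)^{t}=\Pi_{A}(\eta)\Pi_{A}(\omega)^{t}$ becomes the normalized symmetric relation $\Pi_{A}(\omega)^{-1}\Pi_{A}(\eta)=(\Pi_{A}(\omega)^{-1}\Pi_{A}(\eta))^{t}$.

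For (2) and (3) I would read off from Theorem \ref{thmdifferentialringofquasimodularforms} that $\mathrm{Frac}(\widetilde{\mathcal{R}})=R(\Pi_{A}(\omega),\Pi_{A}(\eta))$ has transcendence degree $10$ over $\mathbb{C}$, with the stated presentation as a field over $R$, and then upgrade to the ring. Consider the surjection $\varphi\colon R[X_{ij},Y_{ij}]\twoheadrightarrow\widetilde{\mathcal{R}}$ sending the eight indeterminates to the entries of $\Pi_{A}(\omega),\Pi_{A}(\eta)$; the Riemann--Hodge polynomial $r=X_{11}Y_{21}+X_{12}Y_{22}-X_{21}Y_{11}-X_{22}Y_{12}$ (the matrix identity is antisymmetric, so it is a single scalar equation) lies in $\mathfrak{p}:=\ker\varphi$. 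Since $\mathrm{trdeg}_{\mathbb{C}}R=3$ we get $\mathrm{trdeg}_{R}\widetilde{\mathcal{R}}=7$, hence $\mathrm{ht}\,\mathfrak{p}=8-7=1$; as $R[X_{ij},Y_{ij}]$ is a UFD, $\mathfrak{p}=(g)$ with $g$ irreducible, and since $r$ is itself irreducible over $R$ (it is a nondegenerate bilinear pairing in the eight variables), $g\mid r$ forces $\mathfrak{p}=(r)$. For $\widetilde{\mathcal{R}}^{o}$ the substitution $\Pi_{A}(\eta)\mapsto\Pi_{A}(\omega)Z$ turns the candidate presentation into $R[X_{ij},Z_{ij}]/(Z_{12}-Z_{21})$, a polynomial ring, which is identified with $\widetilde{\mathcal{R}}^{o}$ by the same bookkeeping: a surjection of finitely generated domains over the field $R$ of equal Krull dimension is an isomorphism.

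For (1) I would argue by the Leibniz rule that it is enough to show $\partial_{t_{a}}$ and $\partial_{\tau_{ij}}$ each map the generators of $\widetilde{\mathcal{R}}$ into $\widetilde{\mathcal{R}}$: that $\partial m\in\widetilde{\mathcal{R}}$ for every scalar-valued modular form $m\in M$, and that the entries of $\partial\Pi_{A}(\omega)$ and $\partial\Pi_{A}(\eta)$ lie in $\widetilde{\mathcal{R}}$. For $\partial_{t_{a}}$, the proof of Theorem \ref{thmdifferentialringofquasimodularforms} records that $\partial_{t_{a}}\Pi_{A}(\omega)=\Pi_{A}(\nabla_{\partial_{t_{a}}}\omega)$ with $\nabla_{\partial_{t_{a}}}\omega$ an $\mathcal{O}_{S}$-linear combination of $\omega,\eta$, so $\partial_{t_{a}}\Pi_{A}(\omega)$ and likewise $\partial_{t_{a}}\Pi_{A}(\eta)$ are $\mathcal{O}_{S}$-linear combinations of $\Pi_{A}(\omega),\Pi_{A}(\eta)$ and hence lie in $\widetilde{\mathcal{R}}$ because $\mathcal{O}_{S}\subseteq k(S)\subseteq R$; Jacobi's formula then gives $\partial_{t_{a}}\det\pi=\det\pi\cdot\mathrm{tr}(\Pi_{A}(\omega)^{-1}\partial_{t_{a}}\Pi_{A}(\omega))\in\widetilde{\mathcal{R}}$, and writing $m\in M$ of weight $w$ as $(\det\pi)^{w}f$ with $f\in k(S)$ yields $\partial_{t_{a}}m\in\widetilde{\mathcal{R}}$, so $\partial_{t_{a}}R\subseteq\widetilde{\mathcal{R}}$ by the quotient rule (denominators being units of $R$). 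For $\partial_{\tau_{ij}}$, I would use $\partial_{\tau_{ij}}=\sum_{a}(\partial_{\tau_{ij}}t_{a})\partial_{t_{a}}$ together with the fact, again from the proof of Theorem \ref{thmdifferentialringofquasimodularforms}, that the matrix $(\partial_{\tau_{ij}}t_{a})_{i,j}$ equals $\Pi_{A}(\omega)M_{a}\Pi_{A}(\omega)^{t}$ with $M_{a}$ a matrix of modular functions, so that $\partial_{\tau_{ij}}t_{a}\in\mathcal{R}\subseteq\widetilde{\mathcal{R}}$ and the preceding case applies. (All of this is insensitive to the algebraic extensions allowed in Section \ref{secquasimodularforms}.)

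The step I expect to be the main obstacle is exactly this passage from the field-level statements, which are already contained in Theorem \ref{thmdifferentialringofquasimodularforms}, to the ring-level ones: one must verify that differentiating an element of $\widetilde{\mathcal{R}}$ produces no denominators other than units of $R$. The two facts that make this go through are $\Pi_{A}(\omega)^{-1}\in\widetilde{\mathcal{R}}$ (which in turn rests on $\det\Pi_{A}(\omega)\in R^{\times}$) and the algebraicity of the Gauss--Manin connection in a Hodge frame adapted to the filtration; the irreducibility of the single Riemann--Hodge polynomial needed to pin down the ideal of relations in (3) is then just a short linear-algebra check.
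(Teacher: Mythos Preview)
Your argument is correct and runs along the same lines as the paper's: stability under $\partial_t$ via the Gauss--Manin connection (the Picard--Fuchs system has order two, so $\partial_t^2\pi$ is a $k(t)$-combination of $\pi,\partial_t\pi$), and stability under $\partial_\tau$ via the chain rule together with $\partial_\tau t\in k(t)[\pi]$; parts (2)--(3) are then read off from Theorem~\ref{thmdifferentialringofquasimodularforms}. Where you differ is in the level of care: the paper's proof is a terse paragraph that essentially says ``the rest follow from Theorem~\ref{thmdifferentialringofquasimodularforms}'' and does not separately justify the ring-level presentation or the inclusion $\partial R\subseteq\widetilde{\mathcal{R}}$, whereas you (i) first observe $\widetilde{\mathcal{R}}=\widetilde{\mathcal{R}}^{o}$ using $\det\Pi_A(\omega)\in R^\times$, (ii) supply the UFD/height-one argument to upgrade the field presentation to the ring presentation, and (iii) use Jacobi's formula to check that $\partial_\tau$ and $\partial_t$ send scalar modular forms into $\widetilde{\mathcal{R}}$. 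These additions are genuine improvements in rigor---the paper's claim ``$R\subseteq k(t)[\pi]$'' is only literally true after inverting $\det\pi$, which is exactly what your adjugate observation makes explicit---but they do not change the underlying strategy.
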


The reason we have chosen the generators $\Pi_{A}(\omega), \Pi_{A}(\omega)^{-1}\Pi_{A}({\eta}) $ in constructing the ring $\widetilde{\mathcal{R}}^{o}$
is that
in later applications it is  the quantity $\Pi_{A}(\omega)^{-1}\Pi_{A}({\eta}) $ that naturally enters the stage
through the Bergman kernel.
\begin{proof}

Recall that the Gauss-Manin connection is
an $\mathcal{O}_{S}$-derivation. This tells that
$\partial_{t}^{2}\pi$ is a $k(t)$-linear combination of $\pi ,\partial_{t}\pi$.
Therefore,
$k(t)[ \pi, \partial_{t}\pi]$ is closed under $\partial_{t}$.

The derivation of
\eqref{eqnpartialtautincluded} in the proof of Theorem \ref{thmdifferentialringofquasimodularforms} in fact shows
\begin{equation}
\partial_{\tau}t\in k(t)[\pi]\,.
\end{equation}
By the chain rule, this tells that $k(t)[ \pi, \partial_{t}\pi]$ is also $\partial_{\tau}$-stable.
Since $R\subseteq k(t)[\pi]$,
the ring $R[ \pi, \partial_{t}\pi]$ and hence
$R[\Pi_{A}(\omega), \Pi_{A}(\eta)]$ is also $\partial_{\tau}$-stable.
Applying \eqref{eqnetaisGaussManin}, we have proved the first statement.
The rest follow from Theorem \ref{thmdifferentialringofquasimodularforms}.

\end{proof}

Corollary \ref{cordifferentialringofquasimodularforms} says that
the ring $\widetilde{\mathcal{R}}$ of quasi-modular forms is
a differential ring. It is not only a
$k(t)_{\partial_{\tau}}$-module, but also up to algebraic extension it coincides with
$k(t)_{\partial_{\tau}}$.
That is, any ring including $k(t)$
and closed under differentiation $\partial_{\tau}$
must be $\widetilde{\mathcal{R}}$, up to an algebraic extension.
Therefore, $\widetilde{\mathcal{R}}$ is the ``minimal" differential ring
including $k(t)$.
On the other hand,
the presentation in Corollary \ref{cordifferentialringofquasimodularforms} gives an extremal simple dimension count \cite{Bertrand:2000transcendence}
\begin{equation}
3+4+4-1
\end{equation}
for the transcendental degree
of its factional field: the $3$ occurs as that for the field $k(S)$, the first $4$ from the $A$-cycle periods
forming a vector-valued modular form, the second $4$ from the $A$-cycle quasi-periods, and the $-1$
from the 2nd Riemann-Hodge bilinear relation.
This structure is universal in the sense that it is completely determined by differential Galois theory for the universal family and does not reply on the details on the congruence subgroup
$\Gamma$.

From the presentation of the
ring $\widetilde{\mathcal{R}}^{o}$
we see that all of
the ``quasi"-ness is completely encoded in the quasi-period matrix $\Pi_{A}(\omega)^{-1}\Pi_{A}(\eta)$.
For the case of a family of Jacobian varieties of genus two curves,
its explicit expression is given in \eqref{eqnausimodularintermsofthetaderivative}
in terms of special values of the derivatives of theta functions.
The structure of the ring $\widetilde{\mathcal{R}}^{o}$ from the differential field perspective gives another way of presenting
the quasi-modular generator up to algebraic extension:
$\widetilde{\mathcal{R}}^{o}$ can be generated over ${\mathcal{R}}$ from
the derivative of any scalar-valued modular form of nonzero weight.
The reason is that according to the transformation law, such a derivative can not be modular and therefore must be quasi-modular.
For example, one can take
$\partial_{\tau}\log \chi_{10}$
or $\partial_{\tau}\log \chi_{12}$.

\begin{ex}

Consider the quintic model \eqref{eqngenustwomodelquintic} or  \eqref{eqnhyperellipticquintic} with $b_0=1,b_6=0$.
The period matrix $\Pi_{A}(\omega)$ are given in \eqref{eqnperiodforquintic}.
The quasi-period matrix $\Pi_{A}(\eta)$ can be derived from \eqref{eqnausimodularintermsofthetaderivative}.
One has the following Rauch's variational formula for the periods and quasi-periods, see \cite{Enolski:2007periods} and references therein\footnote{In \cite{Enolski:2007periods}, it is assumed that
$b_{0}=4$ and
the basis $\omega$ therein is $2$ times that in this work.
Also the convention for the period matrix is different from ours. We have rewritem the results therein in our convention.},
\begin{eqnarray}\label{eqnRauch}
{\partial\over \partial e_k}
\begin{pmatrix}\Pi_A(\omega)&-\Pi_A(\eta)\\ \Pi_B(\omega)&-\Pi_B(\eta)\end{pmatrix}
=\begin{pmatrix}\Pi_A(\omega)&-\Pi_A(\eta)\\ \Pi_B(\omega)&-\Pi_B(\eta)\end{pmatrix}
\begin{pmatrix}\alpha_k^t &\gamma_k^t\\ \beta_k^t&-\alpha_k
\end{pmatrix}\,,
\quad
k=1,2,\cdots, 5\,.
\end{eqnarray}
The matrices $\alpha_k,\beta_k,\gamma_k$ are given as follows.
Let
\begin{equation*}
U(x)
=\begin{pmatrix}
1\\
x
\end{pmatrix}:=
\begin{pmatrix}
U_{1}(x)\\
U_{2}(x)
\end{pmatrix}\,,\quad
 V(x)=
 \begin{pmatrix}
  \sum_{\ell=1}^{4} \ell b_{3-\ell} x^{\ell}\\
   4  \sum_{\ell=2}^{3} (\ell-1) b_{2-\ell} x^{\ell} \
     \end{pmatrix}
     :=
     \begin{pmatrix}
V_{1}(x)\\
V_{2}(x)
\end{pmatrix}
     \,.
\end{equation*}
Then
\begin{eqnarray*}
\alpha_k&=&-{1\over 2}  \left( {1\over f'(e_{k})}  U(e_k) V(e_k)^t -
\begin{pmatrix}
0 & 0\\
1 & 0
\end{pmatrix}
\right)\,,\nonumber\\
\beta_k&=&-2  \left( {1\over f'(e_{k})}  U(e_k) U(e_k)^t
\right)\,,\nonumber\\
\gamma_k&=&{1\over 8}  \left( {1\over f'(e_{k})}  V(e_k) V(e_k)^t
-e_{k}
\begin{pmatrix}
{V_{1}(e_k)\over e_{k}} & {V_{1}(e_k)\over e_{k}}\\
{V_{1}(e_k)\over e_{k}} & {V_{2}(e_k)\over e_{k}^2}
\end{pmatrix}
-
\begin{pmatrix}
{V_{1}(e_k)\over e_{k}} & 0\\
0 & {V_{2}(e_k)\over e_{k}^2}
\end{pmatrix}
\right)\,.
\end{eqnarray*}
From this one can immediately derive the Picard-Fuchs equations for the periods and quasi-periods.
One also has
\begin{equation*}
{\partial\over \partial e_{k}}\tau= 2\pi i \Pi_{A}(\omega)^{-1} \beta_{k}^{t} (\Pi_{A}(\omega)^t)^{-1} \,.
\end{equation*}
These relations exhibit the differential ring structure of $\widetilde{\mathcal{R}}=R[\Pi_A(\omega),\Pi_A(\eta)]$.

\end{ex}


\subsubsection{Other differentials arising from covariant derivatives}
\label{subsecquasifromconnection}

We now discuss the transformation law of the derivatives of a modular form and of its some other differentials.
By construction,
for a modular function $f$ its derivative $\partial_{\tau}f$ lies in
$ \Omega_{S}^{1}\otimes k(S)\cong \mathrm{Sym}^{\otimes 2}\underline{\omega}\otimes k(S)$.
This says that we get a modular form in the symmetric square of the standard representation. In particular, this is a modular form instead of a quasi-modular form.
If however $f$ lies in some nontrivial representation $V_{\rho}$, then $\partial_{\tau}$
has to be replaced by the covariant derivative $D=\partial_{\tau}+a$ on $V_{\rho}$
so that $Df\in V_{\rho}\otimes \Omega_{S}^{1}\otimes k(S)$.
The existence of the nontrivial connection matrix $a$
tells that $\partial_{\tau} f$ is not modular.
In this work we shall mainly be interested in the special case $V_{\rho}=\underline{\omega}^{\otimes k}$ or
$ \mathrm{Sym}^{\otimes k} \underline{\omega}$ for some $k$.\\

By construction the differentiation by $\partial_{\tau}$ is nothing but the one induced by Gauss-Manin connection
\begin{equation}
\nabla: \mathcal{V}_{S}\rightarrow \mathcal{V}_{S}\otimes \Omega_{S}^{1} \,.
\end{equation}
For the modular form valued in $\underline{\omega}$,
the Gauss-Manin action $\nabla$ sends
it to a quantity
in
$ \mathcal{V}_{S}\otimes \Omega^{1}_{S}
\cong \mathrm{Sym}^{\otimes 2}\underline{\omega}\otimes \mathcal{V}_{S}$
which by Definition \ref{dfnalgebraicdefinition} yields a quasi-modular form.
A splitting of the Hodge filtration in \eqref{eqnHodgefiltration}
 induces a retraction
$r:\mathcal{V}\rightarrow \underline{\omega}$ and one can show that
$r\circ \nabla$ gives rise to a connection on
$\underline{\omega}$
\begin{equation}
D=r\circ \nabla:  \underline{\omega}\rightarrow
\mathcal{V}_{S}\rightarrow \mathcal{V}_{S}\otimes \Omega_{S}^{1}
\rightarrow \underline{\omega}\otimes \Omega_{S}^{1}
 \,.
\end{equation}
The resulting quantity is modular form valued in the representation
$ \underline{\omega}\otimes \Omega_{S}^{1} \cong  \underline{\omega}  \otimes \mathrm{Sym}^{\otimes 2} \underline{\omega}$.
Furthermore,
the above connection $r\circ \nabla$ induces
a connection $\det (r\circ \nabla)$ on
$(\det \underline{\omega}) ^{\otimes k}$ and in particular induces an
action on scalar-valued modular forms such as modular functions and Eisenstein series.

Note that the splitting might exist only  in an enlarged category
of varieties.   The one induced by Hodge decomposition is such an example.\\

To be more concrete, take a Hodge frame $\omega=(\omega_{1},\omega_{2}),\eta=(\eta_{1},\eta_{2})$
adapted to the Hodge filtration satisfying
\eqref{eqndeRhampairing}.
Suppose the Gauss-Manin connection $\nabla$ in the frame above is given by
\begin{equation}\label{eqnGaussManin}
\nabla
\begin{pmatrix}
\omega,
\eta
\end{pmatrix}
=
\begin{pmatrix}
\omega,
\eta
\end{pmatrix}
N
\,,
\quad N=
\begin{pmatrix}
N_{1} & N_{2}\\
N_{3} & N_{4}
\end{pmatrix}\in \mathfrak{sp}_{4}\otimes \Omega_{S}^{1}\,.
\end{equation}
A splitting of the Hodge filtration is equivalently given by a decomposition
\begin{equation}\label{eqnsplittingofHodgefiltration}
\mathcal{V}_{S}=\underline{\omega}\oplus \left(\mathcal{O}_{S} \sigma_{1}\oplus   \mathcal{O}_{S} \sigma_{2}\right)  \,,
\end{equation}
where $(\sigma_{1},\sigma_{2})$ is a set of sections of the form
\begin{equation}
\begin{pmatrix}
\sigma_{1},
\sigma_{2}
\end{pmatrix}
=
\begin{pmatrix}
{\eta}_{1},
{\eta}_{2}
\end{pmatrix}
+
\begin{pmatrix}
\omega_{1},
\omega_{2}
\end{pmatrix}
P
 \,,
\end{equation}
for some matrix-valued function $P$ on $S$.
When acting on a holomorphic section $s$  of $\underline{\omega}$ given by
\begin{equation}
s= \omega v\,,
\quad v:=
\begin{pmatrix}
v_{1} \\
v_{2}
\end{pmatrix}\,,
\end{equation}
we have
\begin{equation}\label{eqncovariantderivative}
D s=
r(\nabla s)
=r \left( \omega dv+ \omega N_{1}v+\eta N_3 v \right)
=
\omega (dv+N_{1} v-PN_3 v)\,.
\end{equation}

In terms of the frame $\{e,\beta\}$ one has
\begin{eqnarray*}
\nabla \omega&=&\omega N_{1}+\eta N_{3}
=e\cdot (\Pi_{A}(\omega) N_{1} +\Pi_{A} (\eta) N_{3} )
+\beta\cdot (\Pi_{B}(\eta)-\tau \Pi_{A}(\eta))N_{3}\,.
\end{eqnarray*}
Therefore, when acting on the section $s=\omega v$ one has
\begin{eqnarray*}
\nabla_{\partial_{\tau}} (\omega v) &=&
e\cdot \left(\Pi_A(\omega)\partial_{\tau}v +\Pi_{A}(\omega)N_{1}(\partial_{\tau})v +\Pi_{A} (\eta) N_{3}(\partial_{\tau}) v \right)
+\beta\cdot (\Pi_{B}(\eta)-\tau \Pi_{A}(\eta))N_{3}(\partial_{\tau})v\,.
\end{eqnarray*}
Similarly, the connection $D$ satisfies
\begin{eqnarray*}
D\omega&=& \omega N_{1}-\omega P N_{3}
=e\cdot \Pi_A(\omega)(N_{1}-PN_{3})\,.
\end{eqnarray*}
and hence
\begin{eqnarray*}
D_{\partial_{\tau}}(\omega v)&=& \omega\cdot
 (\partial_{\tau}v+N_{1}(\partial_{\tau})v-PN_{3}(\partial_{\tau})v)
 :=\omega D_{\partial_{\tau}} v\,.
\end{eqnarray*}

By Definition \ref{dfnalgebraicdefinition}, $\omega$ is a modular form of weight $1$ valued in
$\underline{\omega}$, while $\eta$ and $\sigma_{1},\sigma_{2}$ are components of the corresponding
quasi-modular forms $(\omega,\eta)$ and $(\omega,\sigma_{1},\sigma_{2})$
of weight $1$ and order $1$ valued in $\mathcal{V}_{S}$.
Hence by Definition \ref{dfnalgebraicdefinition}, one sees that
$\Pi_A(\omega)\partial_{\tau}v +\Pi_{A}(\omega)N_{1}(\partial_{\tau})v +\Pi_{A} (\eta) N_{3}(\partial_{\tau}) v $
transforms as a quasi-modular form while
$ \Pi_A(\omega)\left(\partial_{\tau}v+N_{1}(\partial_{\tau})v-PN_{3}(\partial_{\tau})v\right)$
as a modular form.
Therefore,
the difference term
\begin{equation}
(\Pi_{A}(\eta)+\Pi_A(\omega)P) N_{3}(\partial_{\tau})v
\end{equation}
measures the deviation between
 the quasi-modular quantity $\nabla_{\partial_{\tau}}s$
 and
 the modular quantity $D_{\partial_{\tau}}s$.

By construction, all of the terms $N_{1}(\partial_{\tau}),N_{3}(\partial_{\tau}),P$ are modular.
That is,
the quasi-modularity caused by
$\nabla s$
is completely encoded by the quasi-period matrix
$\Pi_{A}(\eta)$.\\

Mimicking the Ramanujan-Serre derivative for the genus one case, we consider the covariant derivative obtained by taking $P=0$.
When acting on the coordinate $v$ of a section $s=\omega v$ one has the coordinate of a modular form
\begin{eqnarray*}
\omega \cdot D_{\partial_{\tau}}v&=&\omega\cdot \left(\partial_{\tau}v +N_{1}(\partial_{\tau}) v
\right)\\
&=&e\cdot (\Pi_A(\omega)\partial_{\tau}v +\Pi_A(\omega)N_{1}(\partial_{\tau})v +\Pi_{A} (\eta) N_{3}(\partial_{\tau}) v )
-e\cdot \Pi_{A} (\eta) N_{3}(\partial_{\tau}) v\nonumber\\
&=& \mathrm{pr}\circ \nabla_{\partial_{\tau}} (\omega v)
-e\cdot \Pi_{A} (\eta) N_{3}(\partial_{\tau}) v\,,
\end{eqnarray*}
where $\mathrm{pr}$
is the projection $\mathcal{V}_{S}\rightarrow \underline{\omega}$
induced by the Hodge frame $\{e,\beta\}$.

One can also take $P=\widehat{\Pi_{A}(\eta)}$
which lives
in the smooth category.
The resulting covariant derivative is the analogue of Shimura-Maass derivative in genus one case.
See \cite{Urban:2014nearly} for the sheaf-theoretic description of this
operator which is induced by the Gauss-Manin connection on $\mathrm{Sym}^{\otimes m}
\mathcal{V}_{S}$ and yields a resulting operator
\begin{equation}
\mathrm{Sym}^{\otimes (k-m)}
\underline{\omega}\otimes \mathrm{Sym}^{\otimes m}
\mathcal{V}_{S}
\rightarrow
\mathrm{Sym}^{\otimes (k+2-(m+1))}
\underline{\omega}\otimes \mathrm{Sym}^{\otimes (m+1)}
\mathcal{V}_{S}
\,.
\end{equation}

\subsection{Quasi-Jacobi forms}
\label{secquasiJacobi}

\subsubsection{Differential function field of Jacobian variety}

We have mentioned in
Theorem
\ref{eqndifferentialfunctionfieldofhyperellipticJacobian}
in
Section \ref{subsecfunctionalfieldofJacobian}
that the functional field of the Jacobian $J(C)$ is a differential field stable
under $\partial_{u}$.

We now consider the relative version of the ring $k_{\partial_{u}}\langle \wp_{ij}\rangle$.
We again work with the family $\mathcal{J}(\mathcal{C})\rightarrow S=\Gamma\backslash\mathcal{H}$.
By Theorem
\ref{eqndifferentialfunctionfieldofhyperellipticJacobian}
we then have
\begin{equation}
k(S)_{\partial_{u}}\langle \wp_{ij}\rangle=k(S)( \wp_{ij}, \wp_{ijk})\,.
\end{equation}
It is easy to check that elements in the above ring are  meromorphic Jacobi forms for $\Gamma$, see \cite{Grant:1985theta}.
This then motivates the following definition.

\begin{dfn}\label{eqndefinitionofJ}
Let $\mathcal{R}, \widetilde{\mathcal{R}}, \widehat{\mathcal{R}}$ be the ring of modular, quasi-modular, almost-meromorphic modular forms, respectively.
Define
\begin{eqnarray*}
\mathcal{J}&:=&\mathcal{R}\otimes k(S)_{\partial_{u}}\langle \wp_{ij}\rangle
=\mathcal{R}\otimes  k(S)( \wp_{ij}, \wp_{ijk})\,,\\
\widetilde{\mathcal{J}}&:=&\widetilde{\mathcal{R}}^{o}\otimes k(S)_{\partial_{u}}\langle \wp_{ij}\rangle
=\widetilde{\mathcal{R}}^{o}\otimes  k(S)( \wp_{ij}, \wp_{ijk})\,,\\
\widehat{\mathcal{J}}&:=&\widehat{\mathcal{R}}^{o}\otimes k(S)_{\partial_{u}}\langle \wp_{ij}\rangle
=\widehat{\mathcal{R}}^{o}\otimes  k(S)( \wp_{ij}, \wp_{ijk})\,.
\end{eqnarray*}
Here the ring
$\widetilde{\mathcal{R}}^{o}$  is defined in
\eqref{eqnormalizedrings}, while the ring $\widehat{\mathcal{R}}^{o}$ is defined by replacing
$\Pi_{A}(\eta)$ in $\widetilde{\mathcal{R}}^{o}$ by $\widehat{\Pi_{A}(\eta)}$.
Elements in the ring $\mathcal{J}$ are meromorphic Jacobi forms for $\Gamma$.
We shall call elements in $\widetilde{\mathcal{J}},\widehat{\mathcal{J}}$ quasi-Jacobi and almost-meromorphic
Jacobi forms, respectively.
\end{dfn}

Note that
we should have used
a covariant derivative (raising operator in the context of Jacobi forms \cite{Eichler:1984}) on sections in $H^{0}(J(C), \mathcal{O}_{J(C)} (m\Theta))$.
However, for rational functions (Jacobi forms of index $m=0$), it turns out that the
connection matrix vanishes. This fact can be checked explicitly. Namely, the $u$-derivatives of index zero Jacobi forms
are Jacobi forms. Hence as long as we restrict to index zero objects,
the derivative $\partial_{u}$ respect the Jacobi form property.

\subsubsection{The Bergman kernel}
\label{secBergmankernel}

The Bergman kernel is characterized by some reproducing property
and normalization condition depending on the marking.
See for example \cite{Tyurin:1978periods, Takhtajan:2001free} for
details.
It can be constructed from the prime form and turns out to be, see  \cite{Baker:1898hyperelliptic, Buchstaber:1996hyperelliptic, Buchstaber:1997, Onishi:1998complex, Matsutani:2001hyperelliptic, Fay:2006theta, Buchstaber:2012multi},
\begin{equation}\label{eqndefofBergman}
\B(p,q)= H\log  \vartheta_{\delta}\left(\Pi_{A}(\omega)^{-t}\left(\phi(p)-\phi(q)\right)\right)
\cdot
d\phi(p)\boxtimes d\phi(q)\,,
\end{equation}
where $H$ is the Hessian operator $(\partial^{2}_{u_{i} u_{j}})$
and $\phi(p)=u(p+\infty)$.
The above is regarded as a matrix-valued two-form
lying in the space whose basis is given by the components of of $du\otimes du^{t}$.

According to \eqref{eqndefinitionofsigma} adapted to our convention \eqref{eqnperiodmatrix} for the matrix $\Pi$,
we see that it is related to the Weierstrass $\wp$-function by
\begin{equation}\label{eqnBergmanintermsofwp}
\B(p,q)=
\left(\wp (u(p)-u(-q))-  \Pi_{A}({\omega})^{-1} \Pi_{A}(\eta)
\right)
 \cdot d\phi(p)\boxtimes d\phi(q)\,.
\end{equation}
From \eqref{eqnbasisformTheta}, we see that
the quasi-period matrix $\Pi_{A}({\omega})^{-1}\Pi_{A}(\eta)$
is nothing but the degree zero term in the Laurent expansion of $\B$.
This is also consistent with the direct computation in \eqref{eqnausimodularintermsofthetaderivative}
where the quasi-periods are given in terms of particular values of derivatives of theta functions.\\

A different choice of odd characteristic corresponds to a different choice
of the marking that is related to the previous one
by an action in $\Gamma(1)=\mathrm{Sp}_{4}(\mathbb{Z})$, as mentioned before
in Section \ref{secothermodels}.
The embedding $\phi: C\rightarrow J(C)$ and the theta-characteristic $\delta$ in \eqref{eqnspecialtheta}
is fixed once and for all so that we can apply results in \cite{Grant1988:generalization, Grant:1990formal, Grant1991:generalization}
such as
\eqref{eqnexyintermsofwpij}.
Therefore we restrict ourselves to the modular subgroup $\Gamma(2)$
that fixes $\delta$,
and
pass to the modular group $\Gamma\cap\Gamma(2)$ in the constructions of
the fractional ring
$\mathcal{R},\widetilde{\mathcal{R}}$ of modular and quasi-modular forms
 in Section \ref{secquasimodularforms},
and in Definition \ref{eqndefinitionofJ}.
Under the action of $\gamma= (a,b;c,d)\in \Gamma\cap\Gamma(2)$ on the marking, one sees $\B$ transforms according to
\begin{equation}
\B\mapsto
\B+2\pi i    ( c\tau+d)^{-1}c \cdot d\phi(p) \boxtimes d\phi(q)\,,
\quad
\forall \gamma= (a,b;c,d)\in \Gamma\cap\Gamma(2)\,.
\end{equation}
This agrees with the transformation property implied by
the characterization of the Bergman kernel \cite{Tyurin:1978periods}.
Combining
\eqref{eqntransformationofquasiperiods}, it follows then that
$ \wp (u(p)-u(q))d\phi(p)\boxtimes d\phi(q) $ is modular invariant for the subgroup preserving
the characteristic $\delta$,
and that $\mathcal{B}\in \tilde{\mathcal{J}}$ is a quasi-Jacobi form.
If we replace the term $\Pi_{A}(\omega)^{-1}\Pi_{A}({\eta})$
in $\mathcal{B}$
by $\Pi_{A}(\omega)^{-1}\widehat{\Pi_{A}({\eta})}$, then the Bergman kernel $\mathcal{B}$ gets changed to the so-called Schiffer kernel
$\mathcal{S}$ which is an almost-meromorphic Jacobi form according to our definition.
See \cite{Tyurin:1978periods, Takhtajan:2001free} for details.\\

More intrinsically, from the fact that $\delta$ is odd, it is easy to see that
$\B(p,q)$
is an element
of
$H^{0}(C\times C, K_{C}\boxtimes K_{C}(2\Delta))^{\mathfrak{S}_{2}}$, where
$\Delta$ is the diagonal in $C\times C$.
In fact, consider the following sequence
of sheaves associated to the nonreduced divisor $2\Delta$
in $C\times C$
\begin{equation}
0\rightarrow
K_{C}\boxtimes K_{C}
\rightarrow
K_{C}\boxtimes K_{C}(2\Delta)
\rightarrow
K_{C}\boxtimes K_{C}(2\Delta)|_{2\Delta}
\rightarrow 0\quad\,.
\end{equation}
The 3rd map above is the polar part of an element in the middle
sheaf. It is also called the biresidue map or symbol map depending on the context.
Consider further the sequence of sheaves
\begin{equation}
0\rightarrow
(K_{C}\boxtimes K_{C})\otimes \mathcal{O}_{\Delta}(-\Delta)
\rightarrow
K_{C}\boxtimes K_{C}(2\Delta)|_{2\Delta}
\rightarrow
K_{C}\boxtimes K_{C}(2\Delta)|_{\Delta}
\rightarrow 0\quad\,.
\end{equation}
By the adjunction formula, one has
$K_{C}\boxtimes K_{C}(2\Delta)|_{\Delta}\cong \mathcal{O}_{\Delta}$.
Also one has
$(K_{C}\boxtimes K_{C})\otimes \mathcal{O}_{\Delta}(-\Delta)\cong K_{\Delta}$.
It can be shown
that
the restriction $K_{C}\boxtimes K_{C}(2\Delta)|_{2\Delta}$
has a one-dimensional space of sections invariant under the action by the symmetric group $\mathfrak{S}_{2}$,
and that there is a unique section which restricts to the canonical trivialization on $\Delta$.
The Bergman kernel is a lift of the above unique section along the biresidue map.
In particular, the extension of the latter to $3\Delta$
is determined by a projective structure which constitutes
a torsor over the space of quadratic differentials.
The extension data is the Bergman projective connection
that is the degree zero term in the Bergman kernel above.
See \cite{Biswas:1996projective, Ben:2003theta, Ben:2004opers} for details.\\

Therefore, in addition to the earlier characterization of $\Pi_{A}({\omega})^{-1}\Pi_{A}(\eta)$
as the coordinate of a section of $ \mathcal{V}_{S}$
with respect to the frame singled out by monodromy weight filtration, we have obtained yet another intrinsic description of it
as well as
the formula \eqref{eqndefofBergman}
which is given in terms of theta functions with odd characteristic.

We summarize the above discussions in the following.
\begin{cor}\label{corBergmankerneldegreezeroterm}
The Bergman kernel $\mathcal{B}$ is
a lift of its principal part in $K_{C}\boxtimes K_{C}(2\Delta)|_{2\Delta}$
that in turn is the unique symmetric element lifting the canonical
section of $K_{C}\boxtimes K_{C}(2\Delta)|_{\Delta}$.
The degree zero term of $\mathcal{B}$
is a projective connection
that is an extension of the above unique symmetric element in $K_{C}\boxtimes K_{C}(2\Delta)|_{2\Delta}$
to $K_{C}\boxtimes K_{C}(2\Delta)|_{3\Delta}$.
The ring $\widetilde{\mathcal{R}}^{o}$ of quasi-modular forms
is generated by the Bergman projective connection
over the ring $\mathcal{R}$ of modular forms.
Concretely, one has
\begin{equation}
\B(p,q)=H\log  \vartheta_{\delta}\left(\Pi_{A}(\omega)^{-t}\left(\phi(p)-\phi(q)\right)\right)
\cdot
d\phi(p)\boxtimes d\phi(q)\,,
\end{equation}
where $H$ is the Hessian operator $(\partial^{2}_{u_{i} u_{j}})$ and $\delta$ can be any odd theta-characteristic.
\end{cor}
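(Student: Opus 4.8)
The plan is to assemble the four assertions from the material already developed together with the characterizing properties of the Bergman kernel recorded in \cite{Tyurin:1978periods, Takhtajan:2001free}. First I would recall that $\mathcal{B}$ is the unique global section of $K_{C}\boxtimes K_{C}(2\Delta)$ on $C\times C$ that is symmetric under $\mathfrak{S}_{2}$, has biresidue (principal part) along $\Delta$ equal to the canonical one, and is normalized by $\oint_{A_{i}}\mathcal{B}(\cdot,q)=0$. Plugging this into the first of the two sheaf sequences displayed just before the statement identifies the principal part of $\mathcal{B}$ with an element of $K_{C}\boxtimes K_{C}(2\Delta)|_{2\Delta}$; plugging it into the second sequence, and using the adjunction identifications $K_{C}\boxtimes K_{C}(2\Delta)|_{\Delta}\cong\mathcal{O}_{\Delta}$ and $(K_{C}\boxtimes K_{C})\otimes\mathcal{O}_{\Delta}(-\Delta)\cong K_{\Delta}$ together with $H^{0}(\Delta,K_{\Delta})$ being one-dimensional on the genus-two curve $C\cong\Delta$, pins down the $\mathfrak{S}_{2}$-invariant part of the sections of $K_{C}\boxtimes K_{C}(2\Delta)|_{2\Delta}$ as one-dimensional and shows there is a unique such section restricting to the canonical trivialization on $\Delta$; its lift along the biresidue map is $\mathcal{B}$. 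This gives the first assertion.

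For the degree-zero term I would use the closed form \eqref{eqnBergmanintermsofwp}, namely $\mathcal{B}(p,q)=\big(\wp(u(p)-u(-q))-\Pi_{A}(\omega)^{-1}\Pi_{A}(\eta)\big)\,d\phi(p)\boxtimes d\phi(q)$, together with the Riemann--Roch description \eqref{eqnbasisformTheta}: since $\mathcal{L}(\Theta)=\mathbb{C}$, the normalization of the hyperelliptic $\sigma$-function leaves no linear or constant contribution beyond the double-pole part of the $\wp$-entries, so the degree-zero coefficient of $\mathcal{B}$ is exactly $-\Pi_{A}(\omega)^{-1}\Pi_{A}(\eta)$. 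Its change-of-marking law \eqref{eqntransformationofquasiperiods} is precisely the inhomogeneous Schwarzian-type shift characterizing a projective connection, and the extension of the unique $2\Delta$-section to $3\Delta$ is governed by exactly such a projective structure (a torsor over $H^{0}(C,K_{C}^{\otimes 2})$), as in \cite{Biswas:1996projective, Ben:2003theta, Ben:2004opers}; this proves the second assertion and identifies the Bergman projective connection with $\Pi_{A}(\omega)^{-1}\Pi_{A}(\eta)$. The third assertion is then immediate from Corollary \ref{cordifferentialringofquasimodularforms}: by definition $\widetilde{\mathcal{R}}^{o}=R[\Pi_{A}(\omega),\Pi_{A}(\omega)^{-1}\Pi_{A}(\eta)]$, the entries of $\Pi_{A}(\omega)$ are meromorphic modular forms and hence lie in $\mathcal{R}$, while $\Pi_{A}(\omega)^{-1}\Pi_{A}(\eta)$ is the Bergman projective connection just identified, so $\widetilde{\mathcal{R}}^{o}$ is generated over $\mathcal{R}$ by that single object.

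Finally, for the closed formula $\mathcal{B}=H\log\vartheta_{\delta}\big((\phi(p)-\phi(q))\Pi_{A}(\omega)^{-1}\big)\,d\phi(p)\boxtimes d\phi(q)$ and the independence of the odd characteristic $\delta$, I would invoke the prime-form construction: this expression arises as $d_{p}d_{q}\log E(p,q)$ with $E$ the prime form, which is $\delta$-independent because two odd characteristics produce expressions for $E$ differing only by single-variable half-differential factors and affine-linear exponential factors, none of which survive the mixed second derivative $d_{p}d_{q}$. The only genuinely delicate point—and the step I expect to need the most care—is making the sheaf bookkeeping around $2\Delta$ and $3\Delta$ fully rigorous, in particular verifying the $\mathfrak{S}_{2}$-invariance statements and confirming that the extension to $3\Delta$ is a \emph{projective} rather than merely affine connection; everything else is a repackaging of \eqref{eqnBergmanintermsofwp}, \eqref{eqnbasisformTheta}, \eqref{eqntransformationofquasiperiods} and Corollary \ref{cordifferentialringofquasimodularforms}.
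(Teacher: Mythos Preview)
Your proposal follows essentially the same route as the paper: the corollary is explicitly presented there as a summary of the preceding discussion, and you invoke precisely the same ingredients---the two short exact sequences on $2\Delta$, the formula \eqref{eqnBergmanintermsofwp} together with \eqref{eqnbasisformTheta} to extract the degree-zero term, the transformation law \eqref{eqntransformationofquasiperiods}, Corollary \ref{cordifferentialringofquasimodularforms}, and the prime-form construction for the $\delta$-independence.

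There is one concrete slip in your sheaf count. You write that $H^{0}(\Delta,K_{\Delta})$ is one-dimensional; for a genus-two curve $C\cong\Delta$ this space is $H^{0}(C,K_{C})$, which is two-dimensional. The correct reason the $\mathfrak{S}_{2}$-invariant part of $H^{0}\big(K_{C}\boxtimes K_{C}(2\Delta)|_{2\Delta}\big)$ is one-dimensional is not a dimension restriction on $K_{\Delta}$ but rather that the $K_{\Delta}$ piece sits in the $\mathfrak{S}_{2}$-\emph{anti}-invariant part: the transposition acts by $-1$ on the conormal direction to $\Delta$, so the simple-pole (order $(p-q)^{-1}$) contribution is odd under swapping $p$ and $q$. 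Hence only the $\mathcal{O}_{\Delta}$ quotient, which is one-dimensional, contributes to the symmetric sections. With this correction your argument goes through, and it is in fact more detailed than the paper's ``It can be shown'' at this point.
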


As explained in Remark \ref{remprincipalpartofBergmankernel}, for the quintic model in \eqref{eqngenustwomodelquintic}
or  \eqref{eqnhyperellipticquintic} with $b_0=1$
\begin{equation}
y^2=f(x;b)=x^5+b_{1}x^4+b_{2}x^3+b_{3}x^2+b_{4}x+b_{5}\,,
\end{equation}
the Bergman kernel \eqref{eqnBergmanintermsofwp} admits an algebraic expression
using \eqref{eqnrelationbetweenwpijandrationalfunctions}
\begin{equation}\label{eqnhyperellipticBergmankernel}
\mathcal{B} (p,q)  ={G(x_{1}, x_{2}) + 2y_{1}y_{2}\over (x_{1}-x_{2})^2} {dx_{1}\over 2y_{1}} {dx_{2}\over 2y_{2}}
-\Pi_{A}(\omega)^{-1}\Pi_{A}(\eta)d\phi(p)\boxtimes d\phi(q)
\,,
\end{equation}
with $p=(x_1,y_1), q=(x_2,y_2)$ and $G$ is given in \eqref{eqnprincipalpartofBergmankernel}.
For the sextic model given in \eqref{eqnhyperellipticformfromcanonicalmap}
\begin{equation*}
Y^{2}= F(X;a)=\sum_{k=0}^{6}a_{k}X^{6-k}\,,
\end{equation*}
the algebraic form for the Bergman kernel is given by \cite[Page24]{Buchstaber:2012multi}. To be precise, let the affine coordinates of $p,q$ be $(X_1, Y_1,1)$ and $(X_2,Y_2,1)$ respectively, then
\begin{equation}\label{eqnhyperellipticBergmankernelsextic}
\mathcal{B}(p,q)
=\frac{G^{(6)}\left(X_{1}, X_{2}\right)+2 Y_{1} Y_{2}}{4\left(X_{1}-X_{2}\right)^{2}} \frac{\mathrm{d} X_{1}}{ Y_{1}} \frac{\mathrm{d} X_{2}}{ Y_{2}}-\Pi_A(\omega)^{-1}\Pi_{A}(\eta)d\phi(p)\boxtimes d\phi(q).
\end{equation}
where now
\begin{eqnarray} \label{eqnGforsextic}
G^{(6)}(X_{1}, X_{2}) &=&2  Y_{2}^{2}+2\left(X_{1}-X_{2}\right) Y_{2} \frac{\mathrm{d}  Y_{2}}{\mathrm{d} X_{2}}\nonumber \\
&&+(X_{1}-X_{2})^{2} \sum_{j=1}^{2} X_{1}^{j-1} \sum_{k=j}^{5-j}(k-j+1) a_{6-(k+j+1)} X_{2}^{k} \nonumber\\
 &=&2 a_{0} X_{1}^{3} X_{2}^{3} +\sum_{i=0}^{2} X_{1}^{i} X_{2}^{i}\left(2 a_{6-2 i}+a_{6-2 i-1}(X_{1}+X_{2})\right) \,.
\end{eqnarray}

As we shall see below, in topological recursion, all of the quasi-modularity in
the open GW potentials enter through $\mathcal{B}$.
Therefore, keeping track of the failure of modularity (in the built-in combinatorial structure in topological recursion) in these potentials is equivalent to keeping track of
the non-holomorphic dependence in the Schiffer kernel.

\begin{rem}
For the genus one case, we can take
$\omega={dx\over y}, {\eta}={xdx\over y}$  in the
Weierstrass normal form.
By uniformization, the lift of this frame is $\tilde{\omega}=dz, \tilde{\eta}=\wp dz$.
We normalize the period $\Pi_{A}(\tilde{\omega})$. Then
$\Pi_{A}(\tilde{\eta})=\int_{A} \wp dz=-\eta_{1}$, where $\eta_{1}=\zeta(z+1)-\zeta(z)$
is a quasi-period.
It turns out that
\begin{equation}
\B(u(p)-u(q))
=\partial_{p}\partial_{q}\ln\vartheta_{({1\over 2}, {1\over 2})}(u(p)-u(q))du(p)\boxtimes du(q)=(\wp+\eta_{1})du(p)\boxtimes du(q)\,,
\end{equation}
where $\eta_{1}=(\pi^2/3)E_{2}$.
It is well known that
$\eta_{1}$ is an elliptic quasi-modular form satisfying
\begin{equation}
\eta_{1}( {a\tau+b\over c\tau+d})=(c\tau+d)^2 \eta_{1}(\tau)-2\pi i c(c\tau+d)\,,
\quad
\forall
\begin{pmatrix}
a & b\\
c & d
\end{pmatrix}\in \mathrm{SL}_{2}(\mathbb{Z})\,.
\end{equation}
On the other hand, we have
\begin{equation}
{1\over \mathrm{Im } {a\tau+b\over c\tau+d}}=(c\tau+d)^2 {1\over \mathrm{Im}\tau}-2i c(c\tau+d)\,.
\end{equation}
Then $\hat{\eta}_{1}=\eta_{1}-2\pi i /(\tau-\bar{\tau})$ is modular and is called
an almost-holomorphic modular form \cite{Kaneko:1995}.
\end{rem}

\section{Topological recursion for genus two mirror curve families and applications}
\label{secapplications}

According to the proof of the Remodeling Conjecture \cite{BKMP2009, FLZ16},
the open GW potentials  of  a toric CY 3-fold $\mathcal{X}$ equal
the differentials $\{\omega_{g,n}\}_{g,n}$ constructed from Eynard-Orantin topological recursion  \cite{Eynard:2007invariants}
for the mirror curve,
under the so-called open and closed mirror maps.
In what follows we shall first study  the geometry of the mirror curve and discuss the construction of the mirror maps, focusing on
a particular example.
Then we apply results in Section \ref{secringofmodularforms} to prove modularity of Eynard-Orantin topological recursion.
Finally we prove the modularity of open and closed GW potentials, based on the proof of the Remodeling Conjecture.

\subsection{Geometry of genus two curves constructed from mirror symmetry}

Our approach in establishing modularity applies to a large class
of genus two mirror curves with hyperelliptic structure constructed from the brane structure.
 In this paper we shall illustrate our strategy
 by analyzing in detail the resolution of the orbifold $\mathbb{C}^3/\mathbb{Z}_6$ which is one of the simplest
 toric Calabi-Yau threefolds with genus two mirror curves.
 This example has been studied extensively in the context of mirror symmetry,
 see e.g., \cite{Katz:1996fh, Chiang:1999tz} and in particular \cite{Klemm:2015direct}
 with which our present work in this part is closely related.

\subsubsection{Construction from mirror symmetry}


The local toric Calabi-Yau threefold $\mathcal X$ in the A-model
is the resolution of the  orbifold $\mathbb{C}^3/\mathbb{Z}_6$, with
\begin{equation}
h^2(\mathcal X)=3\,.
\end{equation}
The height one slice of its fan is depicted in Figure \ref{figuretoricdiagram} below.

\begin{figure}[h]
 \renewcommand{\arraystretch}{1.2}
\begin{displaymath}
  \resizebox{80mm}{!}{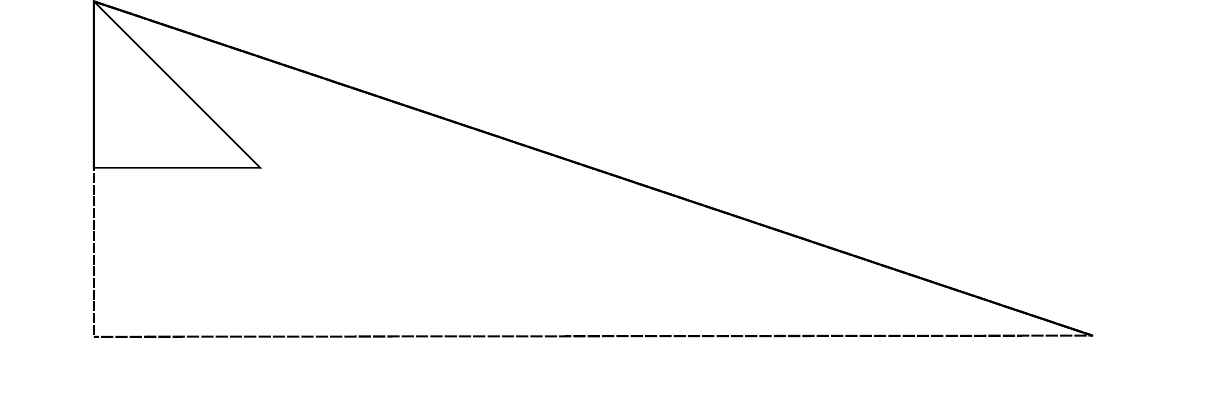}
  \label{fig:defining-polytope}
  \end{displaymath}
  \caption[toricdiagram]{Height one slice of the toric diagram.}
  \label{figuretoricdiagram}
\end{figure}

The mirror curve $C^{o}\subset (\mathbb{C}^*)^2$ is then given by
\begin{equation}\label{eqnhyperellipticmirrorcurve}
1+X+Y+q_1 X^2 + q_2 X^3+ q_3 X^6 Y^{-1}=0\,.
\end{equation}
We denote the compactified mirror curve by $C$ and the resulting families by
$\pi^{o}:  \mathcal{C}^{o}\rightarrow S, \pi: \mathcal{C}\rightarrow S$, respectively.


As elaborated in \cite{Fang:2018open}, we shall choose a suitable Lagrangian with framing $(\mathcal{L},\mathfrak{f})$ in studying the
open GW theory of $\mathcal{X}$. In order to simplify the B-model computation we choose $\mathfrak{f}=0$ and $X: \mathcal{C}^\circ\rightarrow \mathbb{C}$ gives a hyperelliptic structure of $\mathcal{C}$.
Let
\begin{equation}
R=\{r\in \mathcal{C}^\circ~|~ dX|_{r}=0\}
\end{equation}
 be the set of ramification points, then $R$ consists of all $6$ Weierstrass points of $\mathcal{C}$.

\subsubsection{Closed and open moduli }

After
a simple change of coordinate $Y\mapsto Y+h(X)$,
the affine part of the mirror curve in \eqref{eqnhyperellipticmirrorcurve}
takes the hyperelliptic form.
To be more precise, let
\begin{equation}
h(X)={1\over 2}(1+X+q_{1}X^2+q_{2}X^3)\,.
\end{equation}
Let $\tilde Y=Y+h(X)$, and then \eqref{eqnhyperellipticmirrorcurve} is reduced to
\begin{eqnarray}\label{eqnhyperellipticmirrorcurvesextic}
{\tilde Y}^2:&=&(Y+h(X))^{2}\\
&=&-q_{3}X^{6}+h^2(X)\nonumber\\
&=&({1\over 4}q_{2}^2-q_{3})X^{6}+{1\over 2}q_{1}q_{2}X^5+({1\over 4}q_{1}^2+{1\over 2}q_{2})X^4\nonumber\\
&&+({1\over 2}q_{1}+{1\over 2}q_{2})X^3+({1\over 4}+{1\over 2}q_{1})X^2+{1\over 2}X+{1\over 4} \,.
\end{eqnarray}
We again denote the sextic on the right hand side of the above equation \eqref{eqnhyperellipticmirrorcurvesextic} by
\begin{eqnarray}\label{eqnhyperellipticmirrorcurvesextic2}
F(X;a)=\sum_{k=0}^{6}a_{k}X^{6-k}=a_{0}\prod_{k=0}^{5}(X-r_{k})\,,
\end{eqnarray}
where now the coefficients $a_{k},k=0,1\cdots,6$ are polynomials in $q_{1},q_{2},q_{3}$.
The following result concerns the modularity of the parameters $q_1,q_2,q_3$ in the  mirror curve family \eqref{eqnhyperellipticmirrorcurve}.

\begin{lem}\label{lemmodularityofparameters}
The parameters $q_{1},q_{2},q_{3}$ in the mirror curve family \eqref{eqnhyperellipticmirrorcurve} are algebraically independent modular functions with respect to a certain congruence subgroup $\Gamma<\Gamma(1)=\mathrm{Sp}_{4}(\mathbb{Z})$.
\end{lem}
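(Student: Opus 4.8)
The plan is to realize the $q$-parameter space as (a finite cover of) an open dense subset of a Siegel modular variety via the Torelli map, and to extract both the modularity and the algebraic independence of $q_1,q_2,q_3$ from that identification. Write $S_q\subset(\mathbb{C}^*)^3$ for the locus of $(q_1,q_2,q_3)$ at which the sextic \eqref{eqnhyperellipticmirrorcurvesextic2} is smooth, i.e. has nonvanishing discriminant, and has $r_0\notin\{0,\infty\}$ for a fixed labelling of its roots. Each such $q$ determines a smooth genus two curve $C_q$, hence by the Torelli map a point in the moduli space $\mathcal{M}$ of smooth genus two curves and, composing with the period map, a point in $\mathcal{A}=\Gamma(1)\backslash\mathcal{H}$. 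In coordinates this classifying map $\Psi\colon S_q\to\mathcal{M}$ is given by the Igusa absolute invariants $j_1(q),j_2(q),j_3(q)$, which by \eqref{eqnabsoluteinvariantsintermsofbinaryinvariants} are explicit rational functions of the binary invariants $A,B,C,D$, and the latter are explicit polynomials in the coefficients $a_k(q)$ appearing in \eqref{eqnhyperellipticmirrorcurvesextic}, themselves low-degree polynomials in $q_1,q_2,q_3$.

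First I would prove that $\Psi$ is dominant. Since $\dim S_q=\dim\mathcal{M}=3$, this is equivalent to the non-vanishing of the Jacobian determinant $\det\!\big(\partial j_i/\partial q_j\big)_{1\le i,j\le 3}$ at a generic point of $S_q$, which can be checked either by a direct (if tedious) computation with the explicit rational expressions above, or more cheaply by evaluating at one convenient point where $C_q$ has no extra automorphisms. Dominance of $\Psi$ immediately yields algebraic independence of $q_1,q_2,q_3$: the pulled-back functions $\Psi^*\mathfrak{j}_1,\Psi^*\mathfrak{j}_2,\Psi^*\mathfrak{j}_3$ are then algebraically independent elements of $\mathbb{C}(q_1,q_2,q_3)$, forcing $\mathrm{tr.deg}_{\mathbb{C}}\,\mathbb{C}(q_1,q_2,q_3)=3$, and it simultaneously exhibits $\mathbb{C}(q_1,q_2,q_3)$ as a finite algebraic extension of the field $k(\Gamma(1))=\mathbb{C}(\mathfrak{j}_1,\mathfrak{j}_2,\mathfrak{j}_3)$ of modular functions for the full Siegel modular group.

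It then remains to upgrade ``algebraic over $k(\Gamma(1))$'' to ``modular function for a congruence subgroup''. For this I would pass to the Rosenhain normal form: after the coordinate change \eqref{eqntransformationfromsextictoquintic}--\eqref{eqnquintictoRosenhain}, with a suitable ordering of the branch points $r_k(q)$, the curve $C_q$ acquires the form \eqref{eqnhyperellipticRosenhain}, and by Rosenhain's formula \eqref{eqnRosenhainlambda} the resulting parameters $\lambda_1,\lambda_2,\lambda_3$ are rational functions of level-two theta constants, hence modular functions for $\Gamma(2)$. Since $q_1,q_2,q_3$ are algebraic over $\mathbb{C}(r_1,\dots,r_5)$, and the $r_k$ are algebraic over $\mathbb{C}(\lambda_1,\lambda_2,\lambda_3)=k(\Gamma(2))$ (the remaining ambiguity being the $\mathfrak{S}_6$-action on the labelling together with the square roots in \eqref{eqntransformationfromsextictoquintic}), the $q_i$ lie in a finite extension of $k(\Gamma(2))$ obtained by adjoining roots of explicit algebraic equations; every such extension is again a field of modular functions for a principal congruence subgroup $\Gamma(N)$ with $N$ a suitable multiple of $2$. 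Taking $\Gamma$ to be such a $\Gamma(N)$, which is legitimate by the standing convention in Section \ref{secquasimodularforms} that one may freely replace $\Gamma$ by a smaller congruence subgroup, yields that each $q_i$ is a modular function for $\Gamma$.

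The main obstacle is the dominance step: one must genuinely verify that the Igusa invariants of the family \eqref{eqnhyperellipticmirrorcurvesextic2} are not functionally dependent, equivalently that the image of $\Psi$ is not contained in a proper (e.g. Humbert) subvariety of $\mathcal{A}$. Although this reduces to a concrete finite computation, some care is needed in choosing the test point and in excising the appropriate lower-dimensional loci of $S_q$ (the discriminant locus, the loci forcing a degenerate labelling such as $r_0\in\{0,\infty\}$, and the orbifold/Humbert loci $H_1,H_4$), none of which affects the generic conclusions. The subsequent descent of the algebraic extension to a congruence level is routine but should be spelled out in order to justify the word ``congruence'' in the statement.
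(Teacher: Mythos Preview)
Your dominance argument via the Jacobian criterion is exactly what the paper does, and it correctly yields both the algebraic independence of $q_1,q_2,q_3$ and the fact that $\mathbb{C}(q_1,q_2,q_3)$ is a finite extension of $k(\Gamma(1))=\mathbb{C}(\mathfrak{j}_1,\mathfrak{j}_2,\mathfrak{j}_3)$.

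The gap is in your congruence step. Passing to the Rosenhain form only tells you that $\mathbb{C}(\lambda_1,\lambda_2,\lambda_3)=k(\Gamma(2))$; it does \emph{not} tell you that the further finite extension containing the $q_i$ is of the form $k(\Gamma(N))$. Your assertion ``every such extension is again a field of modular functions for a principal congruence subgroup $\Gamma(N)$'' is precisely the nontrivial point, and it is not a formal consequence of adjoining roots of algebraic equations over $k(\Gamma(2))$: a priori the corresponding finite-index subgroup of $\Gamma(2)$ could be non-congruence. (Recall this actually happens for $\mathrm{SL}_2(\mathbb{Z})$.) The paper closes this gap differently and more directly: it observes that $\Gamma(1)$ acts on the finite extension $\mathbb{C}(q_1,q_2,q_3)/\mathbb{C}(\mathfrak{j}_1,\mathfrak{j}_2,\mathfrak{j}_3)$ through its Galois group, so the kernel is a finite-index subgroup of $\mathrm{Sp}_4(\mathbb{Z})$, and then invokes the congruence subgroup property for $\mathrm{Sp}_4(\mathbb{Z})$ (Mennicke, Bass--Milnor--Serre) to conclude that this kernel is automatically congruence. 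Your Rosenhain detour neither avoids nor replaces this input; you should drop it and cite the congruence subgroup property directly.
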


\begin{proof}
Clearly the Igusa absolute invariants $j_{1},j_{2},j_{3}$ in \eqref{eqnabsoluteinvariantsintermsofbinaryinvariants}
are rational functions in $q_1,q_2,q_3$.
It is routine, for example by using a computer program, to check that $\mathbb{C}(q_1,q_2,q_3)$ is an algebraic extension of $\mathbb{C}(j_1,j_2,j_3)$ using the Jacobian criterion.

Since the field $\mathbb{C}(j_1,j_2,j_3)$ is identified with the field $\mathbb{C}(\mathfrak{j}_1,\mathfrak{j}_2,\mathfrak{j}_3)$ of modular functions
for the modular group $\Gamma(1)$, $\mathbb{C}(q_1,q_2,q_3)$ is an algebraical extension of $\mathbb{C}(\mathfrak{j}_1,\mathfrak{j}_2,\mathfrak{j}_3)$.
By looking at the transcendental degrees over $\mathbb{C}$ of these fields, one sees that in particular the generators $q_1,q_2,q_3$
 algebraically independent over $\mathbb{C}$.
 Consider the group homomorphism
\begin{equation}
\Psi: \Gamma(1)\rightarrow \mathrm{Gal}(\mathbb{C}(q_1,q_2,q_3)/\mathbb{C}(\mathfrak{j}_1,\mathfrak{j}_2,\mathfrak{j}_3))\,.
\end{equation}
Its kernel $\Gamma=\mathrm{Ker}\,\Psi$ is of finite index in $\Gamma(1)$
since the Galois group is finite due to the algebraic extension property.
By construction, elements in the field extension $\mathbb{C}(q_1,q_2,q_3)$
are invariant under the subgroup $\Gamma$.
From the classical fact that finite index subgroups of $\mathrm{Sp}_{4}(\mathbb{Z})$
are congruence subgroups, one concludes that the generators $q_1,q_2,q_3$ of $\mathbb{C}(q_1,q_2,q_3)$ are modular functions for the congruence subgroup $\Gamma$.

\end{proof}

The same argument in Lemma \ref{lemmodularityofparameters} above
also shows that the roots $r_{k},k=1,\cdots,6$ are modular functions, and one has the tower of fields  attached to the sextic in
\eqref{eqnhyperellipticmirrorcurvesextic2} as depicted in Figure \ref{figuretoweroffields}.

\begin{figure}[h]
  \renewcommand{\arraystretch}{1.2}
\begin{displaymath}
\xymatrixcolsep{5pc}
\xymatrix{
\mathbb{C}(r_{1},r_{2},r_{3},r_{4},r_{5},r_{0},a_{0})\ar@{-}[d]\\
\mathbb{C}(q_{1},q_{2},q_{3})\ar@{-}[d]\\
\mathbb{C}(a_{1},a_{2},a_{3},a_{4},a_{5},a_{6},a_{0})\ar@{-}[d]\\
\mathbb{C}(j_{1},j_{2},j_{3})\cong \mathbb{C}(\mathfrak{j}_{1},\mathfrak{j}_{2},\mathfrak{j}_{3})
  }
\end{displaymath}
  \caption[Tower of fields]{Tower of fields attached to the family of sextics with 3 parameters,
  where $a_{k},k=0,1,\cdots, 6$ are rational functions in $q_{1},q_{2},q_{3}$.}
  \label{figuretoweroffields}
\end{figure}

In mirror symmetry, one often needs to translate results between the A- and B- sides using the mirror map.
In our case, let
\begin{equation}
s_1=q_1\,,\quad s_2=q_1^{-2}q_2\,,\quad s_3=q_2^{-2}q_3\,.
\end{equation}
Define
\begin{align*}
&A_1=0\,,\\
&A_2=\sum_{\substack{d_1\geq 2d_2,\, d_2\geq 2d_3,\\
d_3\geq 0,\,d_1>0}} \frac{(-1)^{d_2-1}(2d_1-d_2-1)!s_1^{d_1}s_2^{d_2}s_3^{d_3}}{d_1!(d_1-2d_2)!(d_2-2d_3)!(d_3!)^2}\,,\\
&A_3=\sum_{\substack{0\leq 2 d_1\leq d_2,\\
d_2>0,\,d_2\geq 2d_3\geq 0}} \frac{(-1)^{d_1-1}(2d_2-d_1-1)!s_1^{d_1}s_2^{d_2}s_3^{d_3}}{d_1!(d_2-2d_1)!(d_2-2d_3)!(d_3!)^2}\,,\\
&A_4=-\sum_{d_3>0} \frac{(2d_3-1)!s_3^{d_3}}{(d_3!)^2}\,.
\end{align*}
Then the closed mirror map $\mathbf{Q}=(Q_{1},Q_{2},Q_{2})$, constructed by solving Picard-Fuchs system
\cite{Chiang:1999tz, FLZ16} for the
non-compact Calabi-Yau that is mirror to $\mathcal{X}$,
is given by
\begin{eqnarray}\label{eqnclosedmoduli}
Q_1&=&s_1 \exp(-2A_2+A_3)\,,\nonumber\\
Q_2&=&s_2 \exp(A_2-2A_3+A_4)\,,\nonumber\\
Q_3&=&s_3 \exp(-2A_4).
\end{eqnarray}
The open mirror map $\mathbf{\mathfrak{X}}=(\mathfrak{X}_1,\ldots, \mathfrak{X}_n)$ are linear functions in the rational functions $X_1,X_2,\cdots,X_n$ on $C$
whose coefficients are polynomials in the $A_{k}$'s above.
These quantities are understood as expansions near the point $s_1=s_2=s_3=0$ and the so-called open GW point $\mathfrak{s}_0=(0,-1)$.
For this reason, we shall call $\mathbf{Q}, \mathbf{\mathfrak{X}}$ the closed- and open-moduli respectively.
See \cite{Fang:2018open} for detailed discussions on the explicit expressions and modularity for them in the genus one case.

Lemma \ref{lemmodularityofparameters} above implies that the closed-moduli \eqref{eqnclosedmoduli}
are Siegel modular functions, modulo the complicated exponential factors
$Q_{k}s_{k}^{-1},k=1,2,3$.
In this work, we shall rarely need to carry out the expansions in terms of $\mathbf{Q},\mathbf{\mathfrak{X}}$, and hence shall loosely
call $\mathbf{q}=(q_1,q_2,q_3)$ and $\mathbf{X}=(X_1,\cdots,X_n)$ the closed- and open- moduli. This makes it more convenient to phrase statements about modularity.

The same argument in Section \ref{secothermodels} applying to \eqref{eqnhyperellipticmirrorcurvesextic2} enables us to
obtain explicit expressions (see \eqref{eqnexyintermsofwpij}) for the rational functions $X,\tilde{Y}$ on the mirror curve in terms of pull-backs of rational functions in
$\wp_{ij},\wp_{ijk}$ which are Jacobi forms.

\subsection{Modularity in topological recursion}

Recall that the Eynard-Orantin topological recursion \cite{Eynard:2007invariants} defines
differentials $\omega_{g,n}, 2g-2+n>0$ recursively as follows.
One starts with an affine curve $C^{\circ}\rightarrow\mathbb{A}^{1}$ equipped with a simply ramified cover structure
whose ramification locus is $R^{\circ }$.
Then one defines the following basic ingredients on $C^{\circ}$ and on its compactification $C$
\begin{eqnarray*}
\lambda =\ln Y\frac{d  X}{ X}\,,\quad
\omega_{0,1}=0\,,
\quad
\omega_{0,2}=\mathcal{B}\,.
\end{eqnarray*}
Let
$*:p\rightarrow p^*$ be the local involution near a simple ramification point $p_0$.
In the present hyperelliptic cover case, $C\rightarrow \mathbb{P}^1$ is a $2:1$ cover and the $*$ action is the hyperelliptic involution
$\sigma$ given in \eqref{eqnhyperellipticinvolution}.
In general, one defines
\begin{align}
  \label{eqn:EO-recursion}
  \omega_{g,n}(p_1,\ldots, p_n) &= \sum_{p_0\in R^{\circ}}\mathrm{Res}_{p \to p_0}
  \frac{\int_{\xi = p}^{{p}^*} \mathcal{B}(p_n,\xi)}{2(\lambda(p)-\lambda({p^*}))}
  \Big( \omega_{g-1,n+1}(p,{p^*},p_1,\ldots, p_{n-1}) \nonumber\\
  &\quad\quad  + \sum_{g_1+g_2=g}
  \sum_{ \substack{ J\cup K=\{1,..., n-1\} \\ J\cap K =\emptyset } } \omega_{g_1,|J|+1} (p,p_J)\omega_{g_2,|K|+1}({p^*},p_K)\Big)\,,\\
\omega_{g,0}&=\sum_{p_0\in R^{\circ}}\mathrm{Res}_{p \to p_0} d\lambda^{-1}(p)\omega_{g,1}(p)\,.
\end{align}

Thanks to the proof of the Remodeling Conjecture \cite{BKMP2009, FLZ16},
studying the open GW theory for the toric CY 3-fold with the given Lagrangian and framing data
is reduced to studying topological recursion on the curve $C$.
Following the same strategy in \cite{Fang:2018open} in establishing the
modularity and structure theorems for the open GW potentials,
we shall first work with the Schiffer kernel $\mathcal{S}$ instead of the Bergman kernel $\mathcal{B}$
for topological recursion to obtain globally defined differentials $\widehat{\omega}_{g,n}$. In the final step we then take the holomorphic limit of the resulting differentials $\widehat{\omega}_{g,n}$ to produce the open GW potentials $\omega_{g,n}$.

Due to the residue calculus nature in topological recursion
\cite{Eynard:2007invariants}, the algorithm is independent of the choice of coordinates which arises from biregular
morphism on the algebraic curve $C$.
This can be further checked by induction on $g,n$ in the recursive construction of $\widehat{\omega}_{g,n}$.
Hence we can freely choose coordinates to simplify computations.

\subsubsection{Expansions of basic ingredients}

We now study the Laurent expansions of the basic ingredients  such as the differential
$\lambda$ and the Schiffer kernel $\mathcal{S}$ near the ramification points.\\

The differential $\lambda$ used in the proof of the Remodeling Conjecture is given by
\begin{equation}\label{eqnlambdadifferential}
\lambda =\log Y {d X\over X}=
\log(\tilde Y-h(X)) {dX\over X}\,.
\end{equation}

As reviewed in Section \ref{sechyperellipticJacobian}, the image of the ramification points
are 2-torsion points on $J(C)$.
Rational functions of derivatives of $\wp_{ij}$, if not identically zero or infinity on $C$,
will be valued in Siegel modular forms at torsion points.
This implies that
 the Laurent  expansion in coordinate $\tilde{Y}$ of the quantity $\lambda(p)-\lambda(p^*)$,
that is of central importance in topological recursion,  has Siegel modular forms as coefficients.

\begin{rem}\label{remhyperellipticity}
In the original set-up of topological recursion \cite{Eynard:2007invariants},
it is assumed that a cover $C\rightarrow \mathbb{P}^{1}$ is a ramified cover with ramification index $2$ or $1$.
The local involution $*$ is the action which switches the two branches at a ramification point
with ramification index $2$ and fixes the rest of the branches,
For computational purpose,
one needs to realize the action of the local involution $*$ on the functional field of the algebraic curve  $C$.
This is easiest to do if the cover is Galois. The simplest case is the hyperelliptic cover.
Without the hyperellipticity, similar statements about the ring structure and holomorphic anomaly in topological recursion still hold, but now one needs a case by case analysis
to figure out the action on the functional field based on the
details of the equation for the curve.
\end{rem}

To expand the Schiffer kernel $\mathcal{S}$, we again use the algebraic local uniformizer.
From the algebraic formula \eqref{eqnhyperellipticBergmankernel} or \eqref{eqnhyperellipticBergmankernelsextic} for the principal part of the Bergman kernel,
it is easy to obtain
the expansion of
$\mathcal{S}(p, q)-\mathcal{S}(p, \sigma(q))$ around $q\in R$.
The Laurent coefficients are rational functions in the coordinates of the points $p, q$.
This can also be checked by first showing that the $X,\tilde Y$ coordinates of ramification points
are scalar-valued meromorphic modular forms using Lemma \ref{lemmodularityofparameters}
and then using \eqref{eqnhyperellipticBergmankernelsextic}.

\subsubsection{Modularity of initial values}

The initial values for topological recursion are the differentials
$\widehat{\omega}_{g,n}$
satisfying $2g-2+n\leq 0$. These are the disk potential $\partial_{x}W$, the annuals potential
$\widehat{\omega}_{0,2}$ and the genus one potential $\widehat{F}_{1}$.
The open GW potentials will be the holomorphic limits of them.

The disk potential is given by
$\partial_{x}W=\log Y /X$ and
the annulus potential is nothing but the Bergman kernel $\mathcal{B}$.
The modularity (strictly speaking, quasi-Jacobiness) of both are evident, as shown above.
Specializing the above computations to the mirror curve family in
 \eqref{eqnhyperellipticmirrorcurvesextic},
the formula for $\widehat{F}_{1}$ in \cite{Eynard:2007invariants} reads
\begin{equation}
\widehat{F}_{1}=
-{1\over 2}\log \tau_{B}-{1\over 12}\log \prod_{r\in R^{\circ}} {d(Y+h(X))\over d(X-r_{k})^{1\over 2}}+{1\over 2}\ln \det (\mathrm{Im}\tau)^{-1}\,,
\end{equation}
where $R^{\circ}$ is the set of finite ramification points which in the current case is the set of
all Weierstrass points.\\

In general,
for the sextic model in  \eqref{eqnhyperellipticformfromcanonicalmap} or \eqref{eqnhyperellipticsextic},
the Bergman tau-function is given by\footnote{The formula is valid for the sextic model with $a_0=1$.}  \cite{Kokotov:2004tau}
\begin{equation}
\tau_{B}=2^2\det {\Pi}_{A}^{(6)}(\omega)  \prod_{0\leq i<j\leq 5} (r_{i}-r_{j})^{1\over 4}\,.
\end{equation}
Here again the superscript $(6)$ means the period matrix for the sextic model
and similarly $(5)$ for the quintic model.
Direct calculation shows that
\begin{equation}
\log \prod_{r\in R^{\circ}} {dY\over d(X-X_{r})^{1\over 2}}=
\log \left(a_0^3\prod_{0\leq i<j\leq 5} (r_{i}-r_{j})\right)\,,
\end{equation}
This yields
\begin{eqnarray}\label{eqnF1simplified1}
\widehat{F}_{1}&=&
-{1\over 2}\log \det \Pi_{A}^{(6)}(\omega)\nonumber\\
&&-{5\over 24}\log
\prod_{0\leq i<j\leq 5} (r_{i}-r_{j})-{1\over 4}\log (16a_0)+{1\over 2}\ln \det (\mathrm{Im}\tau)^{-1}\,.
\end{eqnarray}
We now rewrite the above quantities in the quintic model obtained from the coordinate transformation
\eqref{eqntransformationfromsextictoquintic}
which changes the sextic model to the quintic model in \eqref{eqngenustwomodelquintic} or \eqref{eqnhyperellipticquintic} with $b_0=1$.
Straightforward calculation tells that
\begin{equation}
\det \Pi_{A}^{(6)}(\omega)
=\det \Pi_{A}^{(5)}(\omega)
\cdot -{(c_{1}-c_{2})^2 r_{0}^{2}\over \prod_{k=1}^{5} (r_{0}-r_{k})}\,,
\end{equation}

\begin{equation}
 \prod_{0\leq i<j\leq 5} (r_{i}-r_{j})
 ={r_{0}^{15}(c_{2}-c_{1})^{15}\over \prod_{k=1}^{5} (c_{2}+e_{k})^5}
 \prod_{1\leq i<j\leq 5} (e_{j}-e_{i})
  \,.
\end{equation}
Note that
\begin{equation}
c_{2}+e_{k}={r_{0}\over r_{0}-r_{k}} (c_{2}-c_{1})\,.
\end{equation}
Simplifying  \eqref{eqnF1simplified1} we obtain
\begin{eqnarray}\label{eqnF1simplified2}
\widehat{F}_{1}
&=&-{1\over 2}\log \det \Pi_{A}^{(5)}(\omega)-{5\over 24}\log
 \prod_{1\leq i<j\leq 5} (e_{j}-e_{i})+{1\over 2}\ln \det (\mathrm{Im}\tau)^{-1}\nonumber\\
&& -{1\over 4}\log (16 a_0)+{13\over 24}\log {r_{0}^2 (c_{1}-c_{2})^{2} \over  \prod_{k=1}^{5} (r_{0}-r_{k})}\,.
\end{eqnarray}
For definiteness, we take the values for $c_1,c_2$ as in \eqref{eqnquintictoRosenhain}.
The period matrix $\Pi_{A}^{(5)}(\omega)$ is given in
\eqref{eqnperiodforquintic}
and its determinant in \eqref{eqnperiodforRosenhain}.

According to Lemma \ref{lemmodularityofparameters}, the last term in \eqref{eqnF1simplified2} is the logarithm of a modular function.
From \eqref{eqndiscriminantintermsoftheta} proved in \cite{Grant1988:generalization}, one has
\begin{equation}
D=\prod_{i<j} (e_{i}-e_{j})^2
=
\pm (\det ({\Pi_{A}^{(5)}(\omega) \over \pi ^2}))^{-10}
\prod_{\nu \,\,\mathrm{even}}\theta^2_{\nu}(0,\tau)\,.
\end{equation}
The product term in the above expression is nothing but essentially the cusp form $\chi_{10}$ \cite{Igusa:1967modular}
according to
\begin{equation}
-2^{14}\chi_{10}=
\prod_{\nu \,\,\mathrm{even}}\theta^2_{\nu}(0,\tau)\,.
\end{equation}
Combining these facts, we obtain that
$\widehat{F}_{1}$ is the logarithm of a modular form, modulo a constant and the
$\det \mathrm{Im}\tau$ term which is real-analytic modular

\begin{eqnarray}\label{eqnF1simplified3}
\widehat{F}_{1}
&=&{13\over 24}\log {
(\theta_{0110}\theta_{1000}\theta_{0011})^{3}
\over
\theta_{1100}\theta_{0010}\theta_{1001} \theta_{0000}\theta_{0001}\theta_{1111}\theta_{0100}}
-{5\over 48}\log
\chi_{10}+{1\over 2}\ln \det (\mathrm{Im}\tau)^{-1}\nonumber\\
&&-{1\over 4}\log a_0 +{13\over 24}\log {( r_{0}-r_{2} )^2 (r_{0}-r_{4}) ^2\over (r_{2}-r_{4})^2 } {1\over  \prod_{k=1}^{5} (r_{0}-r_{k})}\,.
\end{eqnarray}

We now summarize the above discussions in the following.

\begin{prop}\label{propmodularityofinitialvalues}
For the mirror curve family of the resolution of $\mathbb{C}^{3}/\mathbb{Z}_{6}$, we obtain the following
modularity for the initial values for topological recursion.
\begin{enumerate}

\item
The disk potential $\partial_{x}W$
is the logarithm of the pull-back of a meromorphic Jacobi form.

\item
The annulus  amplitude $ \omega_{0,2}=\mathcal{B}$
is the pull-back of a weight $2$, index $0$, meromorphic quasi-Jacobi form.
 It is symmetric in its arguments.
The recursion kernel $K=d^{-1}\widehat \omega_{0,2}/(\lambda-\lambda^{*})$
 is the pull-back of a formal almost-meromorphic Jacobi form of formal weight $0$.

\item
Up to addition by a constant, the genus one potential $F_1$ is
the logarithm of a modular form.
\end{enumerate}
\end{prop}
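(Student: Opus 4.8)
The plan is to treat the three initial data separately, in each case reducing the statement to the explicit formulae assembled in Section~\ref{secgenustwocurveandJacobian} and Section~\ref{secquasimodularforms} together with the modularity of $q_1,q_2,q_3$ established in Lemma~\ref{lemmodularityofparameters}; throughout, as in Section~\ref{secothermodels}, one passes to the subgroup $\Gamma\cap\Gamma(2)$ so that the odd characteristic $\delta$ and the embedding $\phi$ are fixed once and for all. For the \emph{disk potential}, by \eqref{eqnlambdadifferential} $\partial_x W$ is a local branch of $\log Y=\log(\tilde Y-h(X))$. The coefficients of $h(X)=\tfrac12(1+X+q_1X^2+q_2X^3)$ and of the sextic \eqref{eqnhyperellipticmirrorcurvesextic2} are polynomials in $q_1,q_2,q_3$, hence modular functions; performing the coordinate change to the quintic/Rosenhain model (Section~\ref{secothermodels}) and invoking \eqref{eqnexyintermsofwpij}, the functions $X,\tilde Y$ on $\mathcal{C}$ become rational functions of the $\wp_{ij},\wp_{ijk}$ restricted to $\phi(C)$, i.e.\ pull-backs along $\phi$ of meromorphic Jacobi forms. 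Since the change of coordinates has modular-function coefficients, $Y=\tilde Y-h(X)$ is again such a pull-back, which is item~(1).

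For the \emph{annulus amplitude}, $\omega_{0,2}=\mathcal{B}$ is by construction the Bergman kernel, which by Corollary~\ref{corBergmankerneldegreezeroterm} (equivalently \eqref{eqnBergmanintermsofwp}) equals $\bigl(\wp(u(p)-u(-q))-\Pi_A(\omega)^{-1}\Pi_A(\eta)\bigr)\,d\phi(p)\boxtimes d\phi(q)$; the $\wp_{ij}$ are index-$0$, weight-$2$ Jacobi forms and $\Pi_A(\omega)^{-1}\Pi_A(\eta)$ is weight-$2$ quasi-modular, so $\mathcal{B}\in\widetilde{\mathcal{J}}$ has weight $2$ and index $0$, and symmetry in $p,q$ is visible from the Hessian-of-$\log\vartheta_\delta$ expression. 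Replacing $\Pi_A(\omega)^{-1}\Pi_A(\eta)$ by $\Pi_A(\omega)^{-1}\widehat{\Pi_A(\eta)}$ gives the Schiffer kernel $\mathcal{S}=\widehat{\omega}_{0,2}\in\widehat{\mathcal{J}}$. For the recursion kernel $K=d^{-1}\widehat{\omega}_{0,2}/(\lambda-\lambda^*)$, the numerator is obtained by formal integration from $\widehat{\omega}_{0,2}\in\widehat{\mathcal{J}}$ while the denominator $\lambda-\lambda^*=\log(Y/Y^*)\,dX/X$ is the logarithm of a ratio of pull-backs of meromorphic Jacobi forms, so the quotient is a formal almost-meromorphic Jacobi form of formal weight $0$, which is item~(2).

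For the \emph{genus-one potential}, I would start from the Eynard--Orantin expression for $\widehat{F}_1$ displayed above, substitute the Bergman tau-function $\tau_B=2^2\det\Pi_A^{(6)}(\omega)\prod_{0\le i<j\le 5}(r_i-r_j)^{1/4}$ from \cite{Kokotov:2004tau} and the identity $\log\prod_{r\in R^\circ}\tfrac{dY}{d(X-X_r)^{1/2}}=\log\bigl(a_0^3\prod_{i<j}(r_i-r_j)\bigr)$, obtaining \eqref{eqnF1simplified1}; then apply the coordinate change \eqref{eqntransformationfromsextictoquintic} with the values \eqref{eqnquintictoRosenhain} to pass to the Rosenhain model, producing \eqref{eqnF1simplified2}. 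Next I would identify each holomorphic summand: $\det\Pi_A^{(5)}(\omega)$ is a scalar-valued weight-$1$ modular form (it is the determinant of the period matrix of a vector-valued weight-$1$ modular form, cf.\ \eqref{eqnRisincluded}, with the explicit theta-constant expression \eqref{eqnperiodforRosenhain}); the discriminant $\prod_{i<j}(e_i-e_j)^2$ equals, by \eqref{eqndiscriminantintermsoftheta} and $-2^{14}\chi_{10}=\prod_{\nu\text{ even}}\theta_\nu^2(0,\tau)$, a power of $\chi_{10}$ divided by a power of $\det\Pi_A^{(5)}(\omega)$ --- a modular function; and the residual $r_k$-rational terms and $a_0=\tfrac14q_2^2-q_3$ are modular functions by (the argument of) Lemma~\ref{lemmodularityofparameters}. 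This shows, as in \eqref{eqnF1simplified3}, that $\widehat{F}_1$ is a sum of logarithms of scalar-valued modular forms and modular functions plus the single non-holomorphic term $\tfrac12\log\det(\mathrm{Im}\,\tau)^{-1}$. Finally one takes the holomorphic limit: this last term is real-analytic modular of exactly the weight needed for $\widehat{F}_1$ to transform consistently, and under the holomorphic limit (treated as in \cite{Fang:2018open}) it is absorbed, contributing at most an additive constant, so $F_1=\lim_{\mathrm{hol}}\widehat{F}_1$ is the logarithm of a meromorphic modular form up to an additive constant, which is item~(3).

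The main obstacle is precisely this last step for $F_1$: one must verify that, after substituting $\tau_B$, the discriminant formula \eqref{eqndiscriminantintermsoftheta} and the period formula \eqref{eqnperiodforRosenhain}, the various powers of periods and theta constants genuinely combine so that every automorphy factor $\det(c\tau+d)$ coming from $\det\Pi_A^{(5)}(\omega)$ is matched by the $|\det(c\tau+d)|^2$ produced by $\det\mathrm{Im}\,\tau$ up to a holomorphic modular remainder, and that the holomorphic limit of the leftover $\log\det\mathrm{Im}\,\tau$ really is constant rather than a nonconstant quasi-modular term. A secondary, essentially routine point is justifying that the coordinate changes to the quintic and Rosenhain normal forms, which introduce the roots $r_k$, preserve modularity --- this is the ``same argument as in Lemma~\ref{lemmodularityofparameters}'' already invoked for the $r_k$ and $a_k$ --- and that the passage from $\Gamma$ to $\Gamma\cap\Gamma(2)$ needed to fix $\delta$ does not affect any of the statements.
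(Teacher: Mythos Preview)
Your proposal is correct and follows essentially the same route as the paper: for items (1) and (2) you invoke the same identifications \eqref{eqnexyintermsofwpij}, \eqref{eqnBergmanintermsofwp} and Corollary~\ref{corBergmankerneldegreezeroterm}, and for item (3) you carry out exactly the computation the paper does, namely substituting the Bergman tau-function of \cite{Kokotov:2004tau}, passing to the Rosenhain model via \eqref{eqntransformationfromsextictoquintic}--\eqref{eqnquintictoRosenhain}, and then applying \eqref{eqndiscriminantintermsoftheta}, \eqref{eqnperiodforRosenhain} and the identification $\prod_{\nu\text{ even}}\theta_\nu^2=-2^{14}\chi_{10}$ to reach \eqref{eqnF1simplified3}. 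Your flagged ``obstacle'' about the holomorphic limit of $\tfrac12\log\det(\mathrm{Im}\,\tau)^{-1}$ is the one place you are more cautious than the paper, which simply records that this term is real-analytic modular and leaves the passage from $\widehat F_1$ to $F_1$ implicit; in practice this step is standard (the non-holomorphic summand is dropped under the holomorphic limit, as in \cite{Fang:2018open}), so there is no genuine gap.
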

Note that the first and last statements in Proposition \ref{propmodularityofinitialvalues} above
match the results in \cite{Klemm:2015direct} obtained by other means.

\subsubsection{Modularity in topological recursion}

Exactly the same procedure as the genus one case
in \cite{Fang:2018open} yields the modularity and polynomial structure of the
differentials produced by topological recursion.
For example, the structure on the upper bound of the order of pole in
$\widehat{\omega}_{g,n}$
is a consequence of that for $\lambda, \widehat{\omega}_{0,2}$
and the combinatorial patter in the recursion algorithm, and
is independent of details such as the specific family one chooses.
Below we phrase the statements
whose proofs are the same for the genus one case \cite{Fang:2018open}
and are therefore omitted.

\begin{thm}
\label{thmhighergenusWgn}
The following statements hold for $\widehat \omega_{g,n}$ with $2g-2+n> 0$.

\begin{enumerate}
\item
The differential $\widehat \omega_{g,n}(\tilde{Y}_1,\cdots, \tilde{Y}_n), n\neq 0$ is symmetric in its arguments.
In each argument, it only has poles at the ramification points in $R^{\circ}$.
At any of the ramification point, the order of pole in any argument is at most $6g+2n-4$.
Furthermore, the sum of orders of poles over all arguments
in each term in $\widehat \omega_{g,n}(\tilde{Y}_1,\cdots, \tilde{Y}_n)$ is at most $6g+4n-6$.

\item The differential $\widehat \omega_{g,n}(\tilde{Y}_1,\cdots, \tilde{Y}_n),n\neq 0$
is a differential polynomial in $\mathcal{S}(\tilde{Y}_k-\tilde{Y}_r),$ $k=1,2,\cdots,n\,, r\in R$;
the coefficients are elements in the ring
of almost-meromorphic modular forms.
In particular, $\widehat \omega_{g,n}(\tilde{Y}_1,\cdots, \tilde{Y}_n),n\neq 0$
is the pull-back of an almost-meromorphic Jacobi form.

\item The quantities $\hat{F}_{g} ,g\geq 2$ are
 almost-meromorphic modular forms.
\end{enumerate}
\end{thm}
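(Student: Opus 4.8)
The plan is to run the induction on $2g-2+n$ built into the Eynard-Orantin recursion \eqref{eqn:EO-recursion}, exactly as in the genus one treatment of \cite{Fang:2018open}, and to track three pieces of data simultaneously: (i) the pole structure in each variable and the total pole order; (ii) membership of the coefficients in the differential ring $\widetilde{\mathcal{R}}$ (equivalently $\widehat{\mathcal{R}}$ after passing to the Hodge splitting) of quasi-modular forms; and (iii) the fact that after replacing $\mathcal{B}$ by the Schiffer kernel $\mathcal{S}$ the object $\widehat{\omega}_{g,n}$ is globally defined and is the pull-back along the Abel-Jacobi map of an almost-meromorphic Jacobi form. The base cases are Proposition \ref{propmodularityofinitialvalues}: $\widehat{\omega}_{0,1}=\lambda$, $\widehat{\omega}_{0,2}=\mathcal{B}$ (quasi-Jacobi of weight $2$, index $0$), the recursion kernel $K=d^{-1}\widehat{\omega}_{0,2}/(\lambda-\lambda^{*})$ (a formal almost-meromorphic Jacobi form of formal weight $0$), and $\widehat{F}_{1}$ which \eqref{eqnF1simplified3} exhibits as the logarithm of a modular form up to the $\log\det\mathrm{Im}\,\tau$ term, hence almost-meromorphic modular.

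For the inductive step with $n>0$, I would argue that each ingredient entering \eqref{eqn:EO-recursion} is, in the chosen algebraic uniformizer near a Weierstrass point $p_0\in R^{\circ}$, a Laurent expansion whose coefficients lie in $\widehat{\mathcal{R}}$: the kernel $\tfrac{1}{2(\lambda(p)-\lambda(p^{*}))}$ has coefficients that are scalar-valued meromorphic modular forms because the $X,\tilde Y$-coordinates of the ramification points are such forms by Lemma \ref{lemmodularityofparameters} and \eqref{eqnexyintermsofwpij}; $\int_{\xi=p}^{\bar p}\mathcal{B}(p_n,\xi)$ and its expansion have coefficients in $\widehat{\mathcal{R}}\otimes k(S)(\wp_{ij},\wp_{ijk})=\widehat{\mathcal{J}}$ by Corollary \ref{corBergmankerneldegreezeroterm} and \eqref{eqnhyperellipticBergmankernelsextic}; and the lower-order differentials $\widehat{\omega}_{g-1,n+1}$, $\widehat{\omega}_{g_1,|J|+1}$, $\widehat{\omega}_{g_2,|K|+1}$ satisfy the inductive hypothesis. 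The residue at $p_0$ is then a $k(S)$-linear combination of finitely many such coefficients, so it stays in $\widehat{\mathcal{R}}$ by Corollary \ref{cordifferentialringofquasimodularforms} (the ring is a $k(S)_{\partial_\tau}$-module closed under the relevant differentials); the pole-order bookkeeping is the same combinatorial count of residues and propagators as in \cite{Fang:2018open}, using that $\lambda$ contributes the factor governing the $6g+2n-4$ and $6g+4n-6$ bounds. Globality of $\widehat{\omega}_{g,n}$ and its identification with the pull-back of an almost-meromorphic Jacobi form follow because the replacement $\mathcal{B}\rightsquigarrow\mathcal{S}$ trades the quasi-period $\Pi_A(\omega)^{-1}\Pi_A(\eta)$ for its modular completion $\Pi_A(\omega)^{-1}\widehat{\Pi_A(\eta)}$, and the recursion preserves this structure by the same induction.

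For the statement about $\widehat{F}_{g}$, $g\ge 2$, I would specialize the $n=0$ branch $\widehat{\omega}_{g,0}=\sum_{p_0\in R^{\circ}}\mathrm{Res}_{p\to p_0}\int_p^{\bar p}\lambda(q)\,\widehat{\omega}_{g,1}(q)$: since $\widehat{\omega}_{g,1}$ is, by the case just proved, the pull-back of an almost-meromorphic Jacobi form of index zero with coefficients in $\widehat{\mathcal{R}}$, and $\lambda$ together with $\int_p^{\bar p}$ contributes only scalar-valued modular data and $k(S)$-valued local expansions, the residue lands in $\widehat{\mathcal{R}}$; being a residue it is scalar-valued, hence an almost-meromorphic modular form. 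The point I expect to be the main obstacle is the same one that is delicate already in genus one: verifying that residues of products of the local expansions do not introduce genuinely new transcendental quantities outside $\widehat{\mathcal{R}}$ — i.e., that every coefficient produced really is a polynomial combination over $k(S)$ of the generators $\Pi_A(\omega),\widehat{\Pi_A(\eta)}$ and $\wp_{ij},\wp_{ijk}$, and in particular that the non-holomorphic pieces assemble into an \emph{almost}-meromorphic (rather than merely real-analytic) object. This rests on the isomorphism $\widetilde{\mathcal{R}}\cong\widehat{\mathcal{R}}$ and on the differential-ring closure in Corollary \ref{cordifferentialringofquasimodularforms}, together with the hyperellipticity (Remark \ref{remhyperellipticity}) which makes the local involution act algebraically on the function field; granting these, the argument is a direct transcription of \cite{Fang:2018open}, which is why the detailed verification is omitted here.
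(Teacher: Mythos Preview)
Your proposal is correct and follows essentially the same approach as the paper: the paper itself omits the detailed proof, stating that ``the proofs are the same for the genus one case \cite{Fang:2018open} and are therefore omitted,'' and that the pole-order bounds are ``a consequence of that for $\lambda,\widehat{\omega}_{0,2}$ and the combinatorial pattern in the recursion algorithm.'' Your outline---induction on $2g-2+n$ through \eqref{eqn:EO-recursion}, with base cases supplied by Proposition~\ref{propmodularityofinitialvalues}, Laurent expansions at ramification points with coefficients in $\widehat{\mathcal{R}}$ via Lemma~\ref{lemmodularityofparameters} and \eqref{eqnhyperellipticBergmankernelsextic}, closure under residues via Corollary~\ref{cordifferentialringofquasimodularforms}, and hyperellipticity (Remark~\ref{remhyperellipticity}) to control the involution---is exactly the transcription of \cite{Fang:2018open} that the paper has in mind.
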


\begin{ex}
The following gives the
explicit expression for $\widehat{\omega}_{0,3}$.
Let the affine coordinates of $p_{i}$ on the mirror curve
\eqref{eqnhyperellipticmirrorcurvesextic} be $(X_{i},\tilde{Y}_{i}, 1), i=1,2,3$.
Then we have
\begin{eqnarray}
&&\widehat{\omega}_{0,3}(p_1,p_2,p_3)\nonumber\\
&=&\sum_{r_k\in R}{r_kh(r_k)\over 4}\prod_{i=1}^3\left({G^{(6)}(X_i, r_k)\over (X_i- r_k)^2}-\begin{pmatrix}1&X_i\end{pmatrix}
\cdot\Pi_A(\omega)^{-1}\widehat{\Pi_A(\eta)}
\cdot\begin{pmatrix}1\\ r_k \end{pmatrix}\right){dX_i\over 2\tilde Y_i}\,,
\end{eqnarray}
where $G^{(6)}(X_1,X_2)$ is as displayed in
\eqref{eqnGforsextic}.
\end{ex}

\subsection{Applications to open-closed Gromov-Witten theory}

The statements on modularity
and structure theorems for the differentials $\omega_{g,n}$
can be translated into those for
 open and closed GW potentials by the proof of the Remodeling Conjecture.
The Yamaguchi-Yau functional equation also follows from keeping track of
the non-holomorphic dependence in $\tau$ of the Schiffer kernel $\mathcal{S}$, in the same way
described in  \cite{Fang:2018open}.

We again only list the main results for completeness.
Interested readers are referred to  \cite{Fang:2018open} for more details.

\begin{thm}\label{thmholhighergenusWgn}
		The open GW potentials $\omega_{g,n},2g-2+n>0, n>0$
		are pull-backs of quasi-meromorphic Jacobi forms.
		The structure as meromorphic quasi-Jacobi forms is as exhibited in Theorem
		\ref{thmhighergenusWgn}, with the Schiffer kernel $\mathcal{S}$ replaced by the Bergman kernel $\mathcal{B}$.
		The closed GW potentials $F_{g}=\omega_{g,0}, g\geq 2$
		are quasi-modular forms.
			\end{thm}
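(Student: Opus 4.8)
The plan is to obtain Theorem~\ref{thmholhighergenusWgn} as a consequence of Theorem~\ref{thmhighergenusWgn}, the remodeling conjecture \cite{BKMP2009, FLZ16}, and the passage from almost-meromorphic (quasi-)Jacobi forms to (quasi-)Jacobi forms via the holomorphic limit. First I would recall that, by the proof of the remodeling conjecture, the open GW potentials of $(\mathcal{X},\mathcal{L},\mathfrak{f})$ coincide, under the closed and open mirror maps, with the Eynard--Orantin differentials $\omega_{g,n}$ produced by \eqref{eqn:EO-recursion} on the mirror curve family $\pi:\mathcal{C}\to S$ with $\omega_{0,2}=\mathcal{B}$, and that the latter are by construction the holomorphic limits of the globally defined differentials $\widehat{\omega}_{g,n}$ built with the Schiffer kernel $\mathcal{S}$; concretely, $\omega_{g,n}$ is the image of $\widehat{\omega}_{g,n}$ under the ring homomorphism that replaces $\widehat{\Pi_A(\eta)}$ by $\Pi_A(\eta)$, i.e. that discards the non-holomorphic $(\tau-\bar{\tau})^{-1}$ terms appearing in \eqref{eqnalmostholomorphicmodular}. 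This homomorphism is precisely the inverse of the isomorphism $\widetilde{\mathcal{R}}\xrightarrow{\sim}\widehat{\mathcal{R}}$ induced by the splitting of the Hodge filtration coming from the Hodge decomposition, and of its Jacobi analogue $\widetilde{\mathcal{J}}\xrightarrow{\sim}\widehat{\mathcal{J}}$; it fixes $k(S)$ and $k(J(C))$ and sends $\Pi_A(\omega)\mapsto\Pi_A(\omega)$.

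Next I would invoke Theorem~\ref{thmhighergenusWgn}: for $2g-2+n>0$ with $n>0$ the differential $\widehat{\omega}_{g,n}(\tilde Y_1,\dots,\tilde Y_n)$ is a differential polynomial in the $\mathcal{S}(\tilde Y_k-\tilde Y_r)$ with coefficients in the ring $\widehat{\mathcal{R}}$ of almost-meromorphic modular forms, hence lies in the ring $\widehat{\mathcal{J}}$ of almost-meromorphic Jacobi forms; equivalently it is the pull-back along the Abel--Jacobi map $\phi:C\to\mathrm{Pic}^0(C)$ based at the Weierstrass point $\infty$ (the choice already fixed in \eqref{eqndefofBergman} and \eqref{eqnspecialtheta}) of such a form. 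Likewise $\widehat{F}_g=\widehat{\omega}_{g,0}$ is an almost-meromorphic modular form for $g\ge 2$. Since topological recursion involves only iterated residues and products of such objects, computed in the functional fields of $J(C)$ and of its powers --- which are fixed by the holomorphic-limit homomorphism --- the homomorphism commutes with the recursion. Applying it term by term therefore sends $\widehat{\omega}_{g,n}\mapsto\omega_{g,n}\in\widetilde{\mathcal{J}}$ and $\widehat{F}_g\mapsto F_g\in\widetilde{\mathcal{R}}$, which, re-expressed via the remodeling conjecture, is exactly the assertion that the open GW potentials are pull-backs along $\phi$ of meromorphic quasi-Jacobi forms and the closed GW potentials $F_g$, $g\ge 2$, are quasi-modular forms. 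The symmetry in the arguments and the pole bounds at the ramification points carry over verbatim from Theorem~\ref{thmhighergenusWgn}, as the holomorphic limit alters neither the differential-polynomial shape nor the orders of the poles.

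The main obstacle I expect is the careful verification that the holomorphic limit genuinely commutes with the residue-and-product operations in \eqref{eqn:EO-recursion}, carried out along the induction on $(g,n)$: one must check that the limit of a residue equals the residue of the limit. This rests on the fact that, near each ramification point, the Laurent coefficients of $\lambda$ and of $\mathcal{S}(\tilde Y_k - \tilde Y_r)$ in the algebraic local uniformizer are polynomials in the generators of $\widehat{\mathcal{R}}$ and $\widehat{\mathcal{J}}$, with the non-holomorphic variable entering only through those generators --- which is where Lemma~\ref{lemmodularityofparameters} (modularity of $q_1,q_2,q_3$, hence of the ramification data) and the algebraic expansions of the initial ingredients, exactly as in the genus-one treatment of \cite{Fang:2018open}, are used. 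A secondary point to spell out is that for $n>0$ the output is a genuine pull-back along $\phi$ rather than merely a meromorphic differential with quasi-Jacobi coefficients: this propagates through the recursion from the seeds $\lambda$, $\mathcal{B}$ and the recursion kernel (Proposition~\ref{propmodularityofinitialvalues}), using that $x,y$ on $C$ are pull-backs along $\phi$ of rational functions in $\wp_{ij},\wp_{ijk}$ by \eqref{eqnexyintermsofwpij}.
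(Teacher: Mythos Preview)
Your proposal is correct and follows essentially the same approach as the paper: the paper's own argument for Theorem~\ref{thmholhighergenusWgn} is simply to invoke Theorem~\ref{thmhighergenusWgn}, take the holomorphic limit $\widehat{\Pi_A(\eta)}\mapsto\Pi_A(\eta)$ (equivalently $\mathcal{S}\mapsto\mathcal{B}$), and translate via the remodeling conjecture, with all details deferred to \cite{Fang:2018open}. If anything, your outline is more explicit than the paper's, which contents itself with the sentence ``We again only list the main results for completeness''; your identification of the key checkpoint --- that the holomorphic-limit homomorphism commutes with the residue operations in \eqref{eqn:EO-recursion}, verified inductively using the algebraic Laurent expansions at the ramification points --- is exactly the mechanism implicit in the genus-one treatment to which the paper refers.
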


\begin{thm}\label{thmhae}

Let $T^{a}=\ln Q_{a},a=1,2,3$ be the coordinates on the base $S$ of the mirror curve family $\pi: \mathcal{C}\rightarrow S$ of the resolved manifold $\mathcal{X}$ of $\mathbb{C}^{3}/\mathbb{Z}_{6}$,
where $Q_{a},a=1,2,3$ are given in \eqref{eqnclosedmoduli}.
Let $F_{0}$ be the genus zero potential.
Define the quantity $P^{cd}$ to be a solution to
\begin{equation}\label{eqnclosedstringpropagator}
\bar{\partial}_{\bar{b}}P^{cd}=C^{cd}_{\bar{b}}:={(2 \mathrm{Im}\tau)^{-1,c\bar{c}}}\overline{\left ({\partial^3 F_0\over \partial  T^b \partial   T^c  \partial   T^d}\right)}
{(2 \mathrm{Im}\tau)^{-1,d\bar{d}}}\,.
\end{equation}
		which can alternatively be computed from the Weil-Petersson geometry of the moduli space of complex
		structures of the mirror CY 3-fold $\check{\mathcal{X}}$.
Let
\begin{equation}
\mathbold{\eta}=\Pi_{A}(\omega)^{-1}\Pi_{A}(\eta)\,,
\quad
\mathbold{\hat{\eta}}=\mathbold{\eta}+\mathbold{Y}\,,
\quad
\mathbold{Y}= 2\pi i \Pi_{A}(\omega)^{-1}(\tau-\bar{\tau})^{-1}\Pi_{A}(\omega)^{-t}\,.
\end{equation}
Then the open GW potentials $\omega_{g,n},2g-2+n>0, n>0$ satisfy the holomorphic anomaly equations
	\begin{eqnarray}
&&\left( {\partial \over \partial\mathbold{\eta}^{cd}}
+
\sum_{k,r}  {\partial \over \partial   \mathcal{B}_{kr}^{cd}}\right)
\omega_{g,I+1}  \nonumber\\
&=&
{\partial_{Y} P^{ab} \over \partial_{Y}\mathbold{\hat{\eta}}^{cd}} \cdot {1\over 2}
\left(
\partial_{T^a}\partial_{T^b}{\omega}_{g-1, I+1}
+\sum_{ \substack{g_{1}+g_{2}=g\,, I=J\sqcup K\\
(g_{1},J)\neq (0,\emptyset), (g,I )}}
\partial_{T^a}{\omega}_{g_{1},  J} \cdot \partial_{T^b}{\omega}_{g_{2}, K }
\right)\,,
\end{eqnarray}
where $\mathcal{B}_{kr}=\mathcal{B}(\tilde{Y}_{k}-\tilde{Y}_{r})$
and the indices $a,b,c,d$ label the components of the corresponding quantities.
		The closed GW potentials $F_{g}, g\geq 2$  satisfy
			\begin{equation}
		{\partial \over \partial\mathbold{\eta}^{cd}} F_{g}=
		{\partial_{Y} P^{ab} \over \partial_{Y}\mathbold{\hat{\eta}}^{cd}}\cdot {1\over 2}
		\left(
		\partial_{T^a}\partial_{T^b}F_{g-1}
		+\sum_{\substack{ g_{1}+g_{2}=g\\ g_{1}\neq 0,g}}
				\partial_{T^a}F_{g_{1}} \cdot \partial_{T^b}F_{g_{2} }
		\right)\,.
		\end{equation}
					\end{thm}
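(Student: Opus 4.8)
The plan is to follow verbatim the strategy of the genus one case in \cite{Fang:2018open}, with the genus two differential rings of quasi-modular and quasi-Jacobi forms developed above replacing their genus one analogues; the only genuinely new ingredient is the special geometry of the genus two mirror family. First I would record that, by Theorem~\ref{thmhighergenusWgn}, running the Eynard--Orantin recursion \eqref{eqn:EO-recursion} with the Schiffer kernel $\mathcal{S}$ produces differentials $\widehat{\omega}_{g,n}$ which are differential polynomials in $\mathcal{S}(\tilde{Y}_k-\tilde{Y}_r)$ with coefficients in the ring $\widehat{\mathcal{R}}$ of almost-meromorphic modular forms, and $\widehat{F}_g\in\widehat{\mathcal{R}}$. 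Writing $\widehat{\eta}=\Pi_A(\omega)^{-1}\widehat{\Pi_A(\eta)}$ for the completed quasi-period of the statement, the ring $\widehat{\mathcal{R}}^{o}$ is generated over $\mathcal{R}$ by the entries of $\widehat{\eta}$, and by \eqref{eqnhyperellipticBergmankernel}--\eqref{eqnhyperellipticBergmankernelsextic} the Schiffer kernel equals the algebraic (holomorphic) Bergman kernel minus the entries of $\widehat{\eta}$ contracted against $d\phi\boxtimes d\phi$. Hence one may regard $\widehat{\omega}_{g,n}$ and $\widehat{F}_g$ as polynomials in the formal variables $\widehat{\eta}^{cd}$ and $\mathcal{B}_{kr}^{cd}$ over the field of meromorphic modular functions on the moduli, and the operator $\partial/\partial\widehat{\eta}^{cd}+\sum_{k,r}\partial/\partial\mathcal{B}_{kr}^{cd}$ of the statement is precisely the one computing the total variation of such a polynomial in the single non-holomorphic direction. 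The holomorphic limit, setting the non-holomorphic completion term to zero, then converts $\widehat{\omega}_{g,n},\widehat{F}_g$ into the open and closed GW potentials $\omega_{g,n},F_g$ by the remodeling conjecture \cite{BKMP2009, FLZ16}.

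Next I would compute the anti-holomorphic variation of the generators. Since the quasi-period is holomorphic, $\bar{\partial}_{\bar{b}}\widehat{\eta}$ equals the anti-holomorphic derivative of the completion term $2\pi i\,\Pi_A(\omega)^{-1}(\tau-\bar{\tau})^{-1}\Pi_A(\omega)^{-t}$, which reduces to $\overline{\partial_{T^b}\tau}$ conjugated by the $(\tau-\bar{\tau})^{-1}$'s. The crucial step is the special geometry relation: in the flat coordinates $T^a=\ln Q_a$ of \eqref{eqnclosedmoduli}, the variation $\partial_{T^a}\tau_{ij}$ of the genus two period matrix is controlled by the genus zero Yukawa coupling $C_{abc}=\partial_{T^a}\partial_{T^b}\partial_{T^c}F_0$; this follows from the fact that the Kodaira--Spencer map \eqref{eqnKSmorphism} is an isomorphism, together with the Picard--Fuchs system for the non-compact mirror of $\mathcal{X}$ and Lemma~\ref{lemmodularityofparameters}. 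Feeding this into $\bar{\partial}_{\bar{b}}\widehat{\eta}^{cd}$ rewrites it in terms of $C^{cd}_{\bar{b}}$, and comparing with the defining equation \eqref{eqnclosedstringpropagator} of the propagator $P^{cd}$ exhibits $\bar{\partial}_{\bar{b}}\widehat{\eta}^{cd}$ as a multiple of $\bar{\partial}_{\bar{b}}P^{ab}$, the multiplier being exactly the Jacobian factor $\partial_Y P^{ab}/\partial_Y\widehat{\eta}^{cd}$ appearing in the statement; the variation of $\mathcal{S}(\tilde{Y}_k-\tilde{Y}_r)$ reduces to the same quantity since its algebraic part is holomorphic.

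Then I would differentiate the recursion. Applying $\partial/\partial\widehat{\eta}^{cd}+\sum_{k,r}\partial/\partial\mathcal{B}_{kr}^{cd}$ to \eqref{eqn:EO-recursion} and inducting on $2g-2+n$, the only pieces affected are the recursion kernel $K=d^{-1}\widehat{\omega}_{0,2}/(\lambda-\lambda^{*})$ and $\widehat{\omega}_{0,2}=\mathcal{S}$; the variation of $\widehat{\omega}_{0,2}$ produces the contracted bidifferential $d\phi^a\boxtimes d\phi^b$, and a residue and contour manipulation identical to the one in \cite{Fang:2018open} resums the resulting terms into $\frac{1}{2}\bigl(\partial_{T^a}\partial_{T^b}\widehat{\omega}_{g-1,I+1}+\sum\partial_{T^a}\widehat{\omega}_{g_1,J}\,\partial_{T^b}\widehat{\omega}_{g_2,K}\bigr)$, with prefactor $\partial_Y P^{ab}/\partial_Y\widehat{\eta}^{cd}$ supplied by the previous step. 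Passing to the holomorphic limit gives the stated equation for the open potentials $\omega_{g,n}$, while the case $n=0$ is the special case $F_g=\widehat{\omega}_{g,0}$ and yields the closed-string holomorphic anomaly equation.

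The hard part will be the special geometry step. In genus one the relation between $\partial_T\tau$ and the Yukawa coupling is classical, but here $\tau$ is a symmetric $2\times 2$ matrix while $S$ is three dimensional, so Griffiths transversality only places $\partial_{T^a}\tau$ in the three dimensional bundle $\mathrm{Sym}^{\otimes 2}\underline{\omega}$; one must verify that $\partial_{T^1}\tau,\partial_{T^2}\tau,\partial_{T^3}\tau$ actually span it --- which follows from Lemma~\ref{lemmodularityofparameters}, the explicit mirror map \eqref{eqnclosedmoduli} and the isomorphism \eqref{eqnKSmorphism} --- and then carry out the matrix-index bookkeeping needed to match the Jacobian factor in the statement precisely. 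Once this is in place, the remaining steps are a routine genus two transcription of \cite{Fang:2018open} and require no new ideas.
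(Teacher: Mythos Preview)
Your proposal is correct and is essentially the approach the paper takes. The paper itself gives no proof of this theorem: it states that ``the Yamaguchi-Yau functional equation also follows from keeping track of the non-holomorphic dependence in $\tau$ of the Schiffer kernel $\mathcal{S}$, in the same way described in \cite{Fang:2018open}'' and then simply lists the result, referring the reader to \cite{Fang:2018open} for details. Your outline --- polynomial structure from Theorem~\ref{thmhighergenusWgn}, anti-holomorphic variation of $\widehat{\eta}$ via the completion term, special geometry to relate $\partial_{T^a}\tau$ to the Yukawa coupling, differentiating the recursion and inducting, then taking the holomorphic limit --- is exactly that transcription, and in fact supplies more detail than the paper does.
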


In summary,
we proved the modularity of the differentials constructed from topological recursion,
for a special family of genus two hyperelliptic curves.
Using the proof of the Remodeling Conjecture this gets translated to
modularity of the open-closed GW potentials under the mirror maps.

Modularity (quasi-Jacobiness, strictly speaking) in the open moduli is more or less automatic, using
results on the functional field of hyperelliptic Jacobians.
The modularity for the differentials relies on results on the
differential structure of the functional field.
Modularity in the closed moduli, as exhibited in Lemma \ref{lemmodularityofparameters},
requires that the base of the family is a cover of the
Siegel modular variety $\Gamma(1)\backslash \mathcal{H}$.
In particular, our approach does not carry over to a two-parameter family
of genus two mirror curves in a straightforward manner.

Besides the hyperellipticity mentioned in Remark \ref{remhyperellipticity}, the genus two condition is also heavily replied upon in this work.
At genus two, the ring of meromorphic modular forms
is graded by the representations $\mathrm{Sym}^{\otimes k}\underline{\omega}, \det^{\otimes k}\underline{\omega}$
of $\mathbb{U}_{2}$
according to the Weyl character formula.
For higher genus cases, the irreducible representation decomposition is more involved,
making the ring structure of quasi-modular forms less trackable.
Furthermore, for the genus two case the moduli space of smooth curves is essentially the moduli space of principally polarized Abelian varieties of dimension two.
Hence constructions on the latter yield automatically those on the former.
For higher genus cases, we need to pull back the theory of quasi-modular forms
from the moduli space of principally polarized Abelian varieties to the Jacobian locus.
The description of this locus (the Schottky problem) is still an open problem.
The lack of understanding of this problem makes the
generalization of our approach to higher genus curves  difficult.

\begin{appendix}

\section{Dependence of Weierstrass $\wp$-function on frame and marking}
\label{secdependenceappendix}

In this section, we review
the effect of the change of frame and marking
on the matrices of periods and quasi-periods and on
the Weierstrass $\wp$-function, following the discussions in \cite{Grant:1985theta}.

\subsection{Jacobian and Abel-Jacobi map}
\label{secAbelJacobimapappendix}

We recall the notations and conventions from Section \ref{secgenustwocurveandJacobian} as follows.
Consider the genus two hyperellipic curve $C$
of the following form
\begin{equation}
y^{2}=f(x;b)=x^{5}+b_{1}x^{4}+\cdots +b_{5}=\prod_{k=1}^{5}(x-e_{k})\,.
\end{equation}
The Abel-Jacobi map is studied  \cite{Grant:1990formal}, with a carefully chosen weak Torelli marking, the basis for the 1st kind Abelian differentials given by
\begin{equation}\label{eqnbasisof1stkindappendix}
\omega_{1}={dx\over 2y}\,,
\quad
\omega_{2}={xdx\over 2y}\,,
\end{equation}
and another two 2nd kind Abelian differentials given by
\begin{equation}
\eta_{1}={(3x^3+2b_1 x^2+b_{2}x)dx\over 2y}\,,
\quad
\eta_{2}={x^2 dx\over 2 y}\,.
\end{equation}
They form a symplectic basis for $H^{1}(C,\mathbb{C})$ with respect to the Poinca\'e residue paring
which is equivalent to the de Rham paring in singular cohomology up to a $2\pi i$ factor.

The convention for the period matrix and the quasi-period matrix in Section \ref{secgenustwocurveandJacobian}
 is
\begin{equation}\label{eqnperiodmatrixappendix}
\Pi:=\begin{pmatrix}
\Pi_{A}(\omega) & \Pi_{B}(\omega)\\
\Pi_{A}(\eta) & \Pi_{B}(\eta)
\end{pmatrix}
=
\begin{pmatrix}
\int_{A_{1}}\omega_{1} & \int_{A_{2}}\omega_{1} & \int_{B_{1}}\omega_{1} & \int_{B_{2}}\omega_{1} \\
\int_{A_{1}}\omega_{2} & \int_{A_{2}}\omega_{2} & \int_{B_{1}}\omega_{2} & \int_{B_{2}}\omega_{2} \\
\int_{A_{1}}\eta_{1} & \int_{A_{2}}\eta_{1} & \int_{B_{1}}\eta_{1} & \int_{B_{2}}\eta_{1} \\
\int_{A_{1}}\eta_{2} & \int_{A_{2}}\eta_{2} & \int_{B_{1}}\eta_{2} & \int_{B_{2}}\eta_{2}
\end{pmatrix}\,,
\quad
\tau=\Pi_{A}(\omega)^{-1} \Pi_{B}(\omega)\,.
\end{equation}
The Riemann-Hodge bilinear relations/Legendre period relation give rise to
\begin{equation}\label{eqnRiemannHodgeappendix}
\Pi \begin{pmatrix}
0 & \mathbb{1}\\
 - \mathbb{1} & 0
\end{pmatrix} \Pi^{t}=2\pi i
\begin{pmatrix}
0 & \mathbb{1}\\
 - \mathbb{1} & 0
\end{pmatrix}\,.
\end{equation}
It is easy to check that the above is also satisfied once we replace $\Pi$ by $\Pi^{t}$.\\

The curve $C$ is embedded into $J(C)$ via the induced map $\phi: p\mapsto \Phi(p,\infty)$, where
\begin{equation}\label{eqnAJsymmetric}
\Phi: (p_{1},p_{2})\mapsto
u=
\begin{pmatrix}
u_{1}\\
u_{2}
\end{pmatrix}=
\begin{pmatrix}
(\int_{\infty}^{p_{1}} +\int_{\infty}^{p_{2}} )\omega_{1}\\
(\int_{\infty}^{p_{1}} +\int_{\infty}^{p_{2}} )\omega_{2}
\end{pmatrix}\,.
\end{equation}

It turns out with the particularly chosen frame $\omega_{1},\omega_{2},\eta_{1},\eta_{2}$ and the marking above, $\phi(C)$
is the vanishing locus $\Theta$ of
the theta function $\vartheta_{\delta}$ with $\theta$-characteristic
\begin{equation}\label{eqncarefulthetacharacteristic}
\delta=\left(
\begin{pmatrix}
{1\over 2}\\
{1\over 2}
\end{pmatrix},
\begin{pmatrix}
0\\
{1\over 2}
\end{pmatrix}
\right)\,.
\end{equation}

\subsection{Dependence on choices}
\label{secdependonceonchoice}

\subsubsection{Dependence on frame}

A different choice of basis $\omega_{g}$ for the 1st kind Abelian differentials is related to
$\omega=(\omega_{1},\omega_{2})^{t}$ by
\begin{equation}\label{eqntransformationofHodgeframeappendix}
\omega_{g}=g \omega\,,\quad g\in \mathrm{GL}_{2}(\mathbb{C})\,.
\end{equation}
Correspondingly, $\eta$ is changed into
\begin{equation}
\eta_{g}=g^{-t} \eta\,.
\end{equation}
The periods and quasi-periods transform according to
\begin{equation}
\begin{pmatrix}
\Pi_{A} (\omega) & \Pi_{B} (\omega)\\
\Pi_{A} (\eta) & \Pi_{B} (\eta)
\end{pmatrix}
\mapsto
\begin{pmatrix}
g &0\\
0 & g^{-t}
\end{pmatrix}
\begin{pmatrix}
\Pi_{A} (\omega) & \Pi_{B} (\omega)\\
\Pi_{A} (\eta) & \Pi_{B} (\eta)
\end{pmatrix}
\,.
\end{equation}
The Riemann-Hodge bilinear relations \eqref{eqnRiemannHodgeappendix} stay unchanged.

Similarly, the Abel-Jacobi map transforms as
$u\mapsto gu$.
From this one can check that
$\sigma_{(a,b)}(u, \Pi_{A}(\omega),\Pi_{B}(\omega))$, which is defined by replacing the theta-characteristic $\delta$
in
\eqref{eqndefinitionofsigma} by a general one $(a,b)$,
is invariant.
In particular, this allows one to normalize $\omega$ such that $\Pi_{A}(\omega)=1$
in doing computations for a single curve with specified $\tau$, as is customarily  done.
Clearly the quantity
\begin{equation}
\sum_{i,j}du_i\wp_{ij}du_j=du^{t} \,(\wp_{ij})\,du
\end{equation}
is invariant. From this one can then read off the transformation
of the matrix $(\wp_{ij})_{i,j=1,2}$ according to
\begin{equation}
 (\wp_{ij})\mapsto g^{-t} \cdot (\wp_{ij})\cdot g^{-1}\,.
\end{equation}

\subsubsection{Dependence on marking}

The group $\mathrm{Sp}_{4}(\mathbb{Z})$ acts on the marking by
\begin{equation}\label{eqnsymplectictranformationoncyclesappendix}
\begin{pmatrix}
B_{1}\\
B_{2}\\
A_{1}\\
A_{2}
\end{pmatrix}
\mapsto
\gamma
\begin{pmatrix}
B_{1}\\
B_{2}\\
A_{1}\\
A_{2}
\end{pmatrix}\,,
\quad
\gamma=
\begin{pmatrix}
a & b\\
c & d
\end{pmatrix}\in \mathrm{Sp}_{4}(\mathbb{Z})\,.
\end{equation}
Accordingly, one has
\begin{equation}
\gamma:
(A,B)\mapsto (A^{*},B^{*}):=(A,B)
 \begin{pmatrix}
0 & \mathbb{1}\\
  \mathbb{1} & 0
\end{pmatrix}
\gamma^{t}
\begin{pmatrix}
0 & \mathbb{1}\\
  \mathbb{1} & 0
\end{pmatrix}
=(A,B)
\begin{pmatrix}
d^{t}& b^{t}\\
 c^{t} & a^{t}
\end{pmatrix}
\,.
\end{equation}
Here the condition $\gamma\in \mathrm{Sp}_{4}(\mathbb{Z})$ is equivalent to
\begin{equation}\label{eqnsymplecticgroupappendix}
\gamma
\begin{pmatrix}
0 & -\mathbb{1}\\
  \mathbb{1} & 0
\end{pmatrix}
 \gamma^{t}=
\begin{pmatrix}
0 & -\mathbb{1}\\
  \mathbb{1} & 0
\end{pmatrix}
\,.
\end{equation}
It is easy to check that the above is also satisfied once we replace $\gamma$ by $\gamma^{t}$.

Therefore, the transformation of the matrix \eqref{eqnperiodmatrixappendix} of periods and quasi-periods is
\begin{equation}\label{eqntransformationofPiappendix}
\Pi\mapsto \Pi
\begin{pmatrix}
d^{t}& b^{t}\\
 c^{t} & a^{t}
\end{pmatrix}\,.
\end{equation}
In particular, $\tau=\Pi_{A}(\omega)^{-1}\Pi_{B}(\omega)$ transforms as
\begin{equation}\label{eqntransformationoftauappendix}
\gamma:\tau\mapsto (d^{t}+\tau c^{t})^{-1} (b^{t}+\tau a^{t})\,.
\end{equation}
The induced action on $\tau^{t}$ is then the familiar action of $\mathrm{Sp}_{4}(\mathbb{Z})$
on the Siegel upper-half space
\begin{equation}
\tau^{t}\mapsto \gamma\tau^{t}= (a\tau^{t}+b)(c\tau^{t}+d)^{-1}\,.
\end{equation}
Since $\tau=\tau^{t}$,
we shall often denote the above action by
\begin{equation}\label{eqntransformationoftautransposedappendix}
\tau\mapsto \gamma\tau= (a\tau+b)(c\tau+d)^{-1}\,.
\end{equation}

\begin{rem}\label{remconventionbytranspose}
In the above consideration, the action of $\mathrm{GL}_{2}(\mathbb{C})$ is left multiplication
on $\omega$ while that of $\mathrm{Sp}_{4}(\mathbb{Z})$
on the making is right multiplication on $(A,B)$ which commutes with the former.
Had we switched the convention on the matrix $\Pi$ by making a transpose, the resulting action
would be
\begin{equation}
\tau^{t}\mapsto  ( a \tau^{t}+b) (c\tau^{t}+d)^{-1}\,.
\end{equation}
Again since $\tau^{t}=\tau$, the above takes the familiar form of the action of $\mathrm{Sp}_{4}(\mathbb{Z})$
on the Siegel upper-half space.
Correspondingly the Abel-Jacobi map $\Phi$ in \eqref{eqnAJsymmetric} is given by $p_{1}+p_{2}-2\infty\mapsto (u_1, u_2)$.
The previous convention treats cohomologies as row vectors and homologies as column vectors.
In this new convention, cohomologies are column vectors while homologies row vectors.
The new one is more convenient in discussing the representation-theoretic aspects, as we shall see below.
\end{rem}

It follows that
\begin{eqnarray}
&&\Pi_{A}(\eta)\Pi_{A}(\omega)^{-1}\mapsto \Pi_{A}(\eta)\Pi_{A}(\omega)^{-1}+2\pi i \Pi_{A}(\omega)^{-t}(c\tau+d)^{-1}c\Pi_{A}(\omega)^{-1}\,,
\quad \nonumber\\
&&v:=\Pi_{A}(\omega)^{-1}u\mapsto (d^{t}+\tau c^{t})^{-1} \Pi_{A}(\omega)^{-1} u=(c\tau+d)^{-t}v\,.
\end{eqnarray}
Recall the standard fact \cite{Mumford:1983tata}
\begin{equation}
\theta_{(x^{*}, y^{*})}(v^{*},\tau^{*})
=
\chi \det (c\tau+d)^{1\over 2}
e^{\pi i v^{t} (c\tau+d)^{-1} c v }
\theta_{(x, y)}(v,\tau)\,,
\end{equation}
with
\begin{eqnarray}
&& v^{*}=\gamma v=(c\tau+d)^{-t}v\,,
\quad
 \tau^{*}=\gamma\tau=(a\tau+b)(c\tau+d)^{-1}\,,\nonumber
\\
&&  \begin{pmatrix}
x^{*}\\
y^{*}
 \end{pmatrix}
=
 \begin{pmatrix}
 d & -c\\
 -b & a
 \end{pmatrix}
   \begin{pmatrix}
x\\
y
 \end{pmatrix}
 +{1\over 2}
  \begin{pmatrix}
\mathrm{diag} (c^{t}d ) \\
 \mathrm{diag} (a^{t}b)
 \end{pmatrix}\,.
\end{eqnarray}
Here $\chi$ is a multiplier system, $ \mathrm{diag}(M)$
means the row consisting of diagonal entries of $M$.
\begin{eqnarray}\label{eqntransformationofwpappendix}
 && \left( \partial^{2}_{v_iv_j} \log
\sigma_{(x^{*},y^{*})}   \right) (v, \tau^{*})=(c\tau+d) \cdot  \left( \partial^{2}_{v_iv_j} \log
\sigma_{(x,y)}   \right) (v, \tau)  \cdot (c\tau+d)^{t}\,.
\end{eqnarray}
The $\wp$-function, derived from the normalized coordinate, transforms in the following way under an action by $\gamma\in \Gamma(1)$
that  additionally satisfies $(x^{*},y^{*})=(x,y)$:
\begin{eqnarray}
  \left( \wp_{ij} \right) ( \gamma v, \gamma \tau)
&=&
(c\tau+d) \cdot  \left(\wp_{ij}  \right) (v, \tau)  \cdot (c\tau+d)^{t}\,,
\end{eqnarray}
where
\begin{eqnarray}
\gamma v=(c\tau+d)^{-t}v\,,\quad \gamma\tau=({a\tau+b})(c\tau+d)^{-1}\,.
\end{eqnarray}
The $\wp_{ij}$-function derived from $u$-coordinate transforms in the following way under the action
\begin{eqnarray}
\wp_{ij} ( \Pi_{A^*} (\omega)^{-1}u, \gamma \tau)
&=& \wp_{ij}  (\Pi_{A}(\omega)^{-1}u, \tau)\,,
\end{eqnarray}

\section{Modularity and quasi-modularity from geometric perspective}
	\label{appendixqgeometricperspective}

\subsection{Quasi-periods and quasi-modularity}\label{appendixquasiperiods}

The modular group is induced by group action on the weak Torelli marking $\{A_{1},A_{2}, B_{1},B_{2}\}$ which is equivalent to a basis $\{\alpha_{1},\alpha_{2}, \beta_{1},\beta_{2}\}$ of $H^{1}(C,\mathbb{Z})$.
We also fix a symplectic basis $(\omega,\eta)$ for the cohomology $H^{1}(C,\mathbb{C})$ adapted to the Hodge filtration.
Here we have used the short-hand notations $\omega=(\omega_{1},\omega_{2}), \eta=(\eta_{1},\eta_{2})$.\\

We consider the vectors in the cohomology space $H^{1}(C,\mathbb{C})$ to be column vectors.
Let
\begin{equation}
	\gamma:=
	\begin{pmatrix}
a &b \\
c & d
\end{pmatrix}\in \mathrm{Sp}_{4}(\R)\,
\end{equation}
act by $(\beta,\alpha)\mapsto (\beta,\alpha) \gamma^{-1}$.
This means that on the basis of the homologies considered to be row vectors, we have
the left multiplication as in \eqref{eqnsymplectictranformationoncyclesappendix}
\begin{equation}\label{eqnactiononcycles}
\begin{pmatrix}
B_{1} \\
B_{2}\\
A_{1}\\
A_{2}
\end{pmatrix} \mapsto \gamma
\begin{pmatrix}
B_{1} \\
B_{2}\\
A_{1}\\
A_{2}
\end{pmatrix} \,.
\end{equation}
Therefore, it is more natural to use the following new convention  for the matrix of periods and quasi-periods instead of the one in \eqref{eqnperiodmatrixappendix}
\begin{equation}
\Pi:=\begin{pmatrix}
\Pi_{B}(\omega) & \Pi_{B}(\eta)\\
\Pi_{A}(\omega) & \Pi_{A}(\eta)
\end{pmatrix}\,,
\quad
\tau= \Pi_{B}(\omega)\Pi_{A}(\omega)^{-1}\,.
\end{equation}
This new convention is the one explained in
Remark \ref{remconventionbytranspose}.\\

The following is a standard result for symplectic groups.

\begin{lem}\label{lemsymplecticgroup}
Let $\gamma=\begin{pmatrix}
a &b \\
c & d
\end{pmatrix}\in \mathrm{Sp}_{4}(\R)$, then
$\gamma^{t}\in \mathrm{Sp}_{4}(\R)$. In particular, one has
\begin{eqnarray*}
 a b^t= b a^t\,, \quad cd^t=dc^t\,, \quad ad^t-bc^{t}=1\,,\\
  a^t c= c^t a\,, \quad b^t d=d^t b\,, \quad  a^t d-c^{t} b=1\,.
\end{eqnarray*}
Consequentially
\begin{eqnarray*}
(c\tau+d)^t (a\tau+b)=(a\tau+b)^t (c\tau+d)\,.
\end{eqnarray*}
\end{lem}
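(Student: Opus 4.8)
The plan is to deduce everything from the single defining relation $\gamma\, J\, \gamma^{t}=J$ with $J=\begin{pmatrix}0&-\mathbb{1}\\ \mathbb{1}&0\end{pmatrix}$, which is exactly \eqref{eqnsymplecticgroupappendix}, together with its transpose-companion $\gamma^{t} J\, \gamma=J$. First I would record this companion relation: since $\mathrm{Sp}_{4}(\mathbb{R})$ is a group and $J^{-1}=-J$, the identity $\gamma J\gamma^{t}=J$ gives $\gamma^{t}=J^{-1}\gamma^{-1}J$, hence $\gamma^{t}J\gamma=J^{-1}\gamma^{-1}J^{2}\gamma=-J^{-1}\gamma^{-1}\gamma=-J^{-1}=J$. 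This proves $\gamma^{t}\in\mathrm{Sp}_{4}(\mathbb{R})$, which is the first assertion (and is the remark already made after \eqref{eqnsymplecticgroupappendix}).

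Next I would simply expand both relations in $2\times 2$ block form. Writing $\gamma J=\begin{pmatrix}b&-a\\ d&-c\end{pmatrix}$ and multiplying on the right by $\gamma^{t}=\begin{pmatrix}a^{t}&c^{t}\\ b^{t}&d^{t}\end{pmatrix}$, the equation $\gamma J\gamma^{t}=J$ yields block by block $ab^{t}=ba^{t}$, $cd^{t}=dc^{t}$ and $ad^{t}-bc^{t}=\mathbb{1}$ (the lower-left block being the transpose of the upper-right one, so it carries no new information). Repeating the computation with $\gamma^{t}J\gamma=J$, i.e.\ $\gamma^{t}J=\begin{pmatrix}c^{t}&-a^{t}\\ d^{t}&-b^{t}\end{pmatrix}$ times $\gamma$, gives the remaining three: $a^{t}c=c^{t}a$, $b^{t}d=d^{t}b$ and $a^{t}d-c^{t}b=\mathbb{1}$. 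This step is pure bookkeeping.

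For the consequence about $(c\tau+d)^{t}(a\tau+b)$, rather than substituting the six identities by hand I would use the column vector $\begin{pmatrix}\tau\\ \mathbb{1}\end{pmatrix}$. Since $\tau$ is symmetric, $\begin{pmatrix}\tau\\ \mathbb{1}\end{pmatrix}^{t}J\begin{pmatrix}\tau\\ \mathbb{1}\end{pmatrix}=\tau-\tau^{t}=0$; applying $\gamma^{t}J\gamma=J$ then gives $\left(\gamma\begin{pmatrix}\tau\\ \mathbb{1}\end{pmatrix}\right)^{t}J\left(\gamma\begin{pmatrix}\tau\\ \mathbb{1}\end{pmatrix}\right)=\begin{pmatrix}\tau\\ \mathbb{1}\end{pmatrix}^{t}J\begin{pmatrix}\tau\\ \mathbb{1}\end{pmatrix}=0$. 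Since $\gamma\begin{pmatrix}\tau\\ \mathbb{1}\end{pmatrix}=\begin{pmatrix}a\tau+b\\ c\tau+d\end{pmatrix}$ and $J\begin{pmatrix}a\tau+b\\ c\tau+d\end{pmatrix}=\begin{pmatrix}-(c\tau+d)\\ a\tau+b\end{pmatrix}$, the left-hand side equals $(c\tau+d)^{t}(a\tau+b)-(a\tau+b)^{t}(c\tau+d)$, and setting it to zero gives the claim. (Equivalently, one checks directly that this difference equals $\tau-\tau^{t}$ using $a^{t}c=c^{t}a$, $b^{t}d=d^{t}b$ and $a^{t}d-c^{t}b=\mathbb{1}$.)

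There is no genuine obstacle here; the only thing to watch is the sign convention for $J$ and the placement of transposes, so that the block identities come out with exactly the signs displayed in the statement. I would make sure to match the convention fixed in \eqref{eqnsymplecticgroupappendix} and to note explicitly that the final identity uses the symmetry $\tau=\tau^{t}$ of the period matrix.
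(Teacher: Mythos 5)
Your proof is correct. The paper offers no proof of this lemma (it is stated as a standard fact about symplectic groups), and your argument — deducing $\gamma^{t}J\gamma=J$ from $\gamma J\gamma^{t}=J$ using $J^{2}=-\mathbb{1}$, reading off the six identities by block-multiplying both relations, and obtaining the last identity from the isotropy of the column $\begin{pmatrix}\tau\\ \mathbb{1}\end{pmatrix}$ under $\gamma^{t}J\gamma=J$ together with $\tau=\tau^{t}$ — is precisely the standard verification intended, with signs matching the convention fixed in \eqref{eqnsymplecticgroupappendix}.
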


From Lemma \ref{lemsymplecticgroup} we have the following result.
\begin{lem}\label{lemtautransformation}
Let $\gamma=\begin{pmatrix}
a &b \\
c & d
\end{pmatrix}\in \mathrm{Sp}_{4}(\R)$. Then
under the action by $\gamma$, one has
\begin{eqnarray*}
d\tau
&\mapsto&
(c\tau+d)^{-t} \cdot  d\tau \cdot (c\tau+d)^{-1}\,,\\
(\tau-\bar{\tau})
&\mapsto&
(c\tau+d)^{-t }  (\tau-\bar{\tau}) (c\bar{\tau}+d)^{-1}\,.
\end{eqnarray*}
Consequentially, one has
\begin{eqnarray*}
(\gamma\tau-\overline{\gamma\tau})^{-1}&=&
(c\bar{\tau}+d)  (\tau-\bar{\tau})^{-1}(c\tau+d)^t\\
&=&(c\tau+d)  (\tau-\bar{\tau})^{-1}(c\tau+d)^t
- c(c\tau+d)^t\\
(\int_{A} \omega )^{-1}(\tau-\overline{\tau})^{-1}(\int_{A} \omega )^{-t}
&\mapsto&
(\int_{A} \omega )^{-1}(\tau-\overline{\tau})^{-1}(\int_{A} \omega )^{-t}\\
&&- (\int_{A} \omega )^{-1} (c\tau+d)^{-1} c (\int_{A} \omega )^{-t}
\,.
\end{eqnarray*}
\end{lem}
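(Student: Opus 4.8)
The plan is to deduce all four displayed transformations from a single master identity: for any two symmetric matrices $Z,W$ in the Siegel upper-half space and $\gamma=\begin{pmatrix}a&b\\c&d\end{pmatrix}\in\mathrm{Sp}_4(\mathbb{R})$,
\[
\gamma Z-\gamma W=(cZ+d)^{-t}\,(Z-W)\,(cW+d)^{-1}.
\]
First I would establish this by clearing denominators: it is equivalent to $Z-W=(cZ+d)^{t}(\gamma Z-\gamma W)(cW+d)$. Writing $(\gamma Z-\gamma W)(cW+d)=(aZ+b)(cZ+d)^{-1}(cW+d)-(aW+b)$ and using $(cZ+d)^{-1}(cW+d)=\mathbb{1}+(cZ+d)^{-1}c(W-Z)$ together with the symmetry of $\gamma Z=(aZ+b)(cZ+d)^{-1}$ — which gives the key simplification $(cZ+d)^{t}(aZ+b)(cZ+d)^{-1}=(aZ+b)^{t}$ — collapses the right-hand side to $\bigl[(cZ+d)^{t}a-(aZ+b)^{t}c\bigr](Z-W)$. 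Since $Z$ is symmetric, the bracket equals $Z(c^{t}a-a^{t}c)+(d^{t}a-b^{t}c)$, and both $c^{t}a=a^{t}c$ and $d^{t}a-b^{t}c=\mathbb{1}$ are among the symplectic relations recorded in Lemma \ref{lemsymplecticgroup}, so the bracket is $\mathbb{1}$ and the identity follows. The one point requiring genuine care here is the ordering of the $2\times2$ blocks relative to $Z$, since none of them commute; I expect this bookkeeping to be the main (though routine) obstacle, and everything downstream is then essentially cosmetic.

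Granting the master identity, the first two formulas are immediate specializations. Differentiating $\gamma\tau=(a\tau+b)(c\tau+d)^{-1}$ — equivalently, taking $Z=\tau$ and letting $W\to\tau$ in the identity — gives $d\tau\mapsto(c\tau+d)^{-t}\,d\tau\,(c\tau+d)^{-1}$. Taking $Z=\tau$, $W=\bar\tau$, and using that $\gamma$ is real so $\overline{\gamma\tau}=\gamma\bar\tau$, gives $(\tau-\bar\tau)\mapsto(c\tau+d)^{-t}(\tau-\bar\tau)(c\bar\tau+d)^{-1}$. For the first consequential formula I would simply invert this to get $(\gamma\tau-\overline{\gamma\tau})^{-1}=(c\bar\tau+d)(\tau-\bar\tau)^{-1}(c\tau+d)^{t}$, and then substitute $c\bar\tau+d=(c\tau+d)-c(\tau-\bar\tau)$ to split off the anomalous term $-c(c\tau+d)^{t}$ and obtain the second expression.

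For the last transformation I would combine the previous computation with the transformation law of the periods. In the convention of Remark \ref{remconventionbytranspose}, the action \eqref{eqnactiononcycles} on the marking gives $\Pi\mapsto\gamma\Pi$, hence $\Pi_A(\omega)=\int_A\omega\mapsto(c\tau+d)\,\Pi_A(\omega)$, and therefore $\Pi_A(\omega)^{-1}\mapsto\Pi_A(\omega)^{-1}(c\tau+d)^{-1}$ and $\Pi_A(\omega)^{-t}\mapsto(c\tau+d)^{-t}\Pi_A(\omega)^{-t}$. Substituting these together with $(\gamma\tau-\overline{\gamma\tau})^{-1}=(c\tau+d)(\tau-\bar\tau)^{-1}(c\tau+d)^{t}-c(c\tau+d)^{t}$ into $\Pi_A(\omega)^{-1}(\tau-\bar\tau)^{-1}\Pi_A(\omega)^{-t}$, the $(c\tau+d)^{\pm1}$ factors cancel against each other in the main term and the $(c\tau+d)^{\pm t}$ factors cancel in the anomalous term, leaving precisely
\[
\Pi_A(\omega)^{-1}(\tau-\bar\tau)^{-1}\Pi_A(\omega)^{-t}-\Pi_A(\omega)^{-1}(c\tau+d)^{-1}c\,\Pi_A(\omega)^{-t},
\]
which is the claimed formula.
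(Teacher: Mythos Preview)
Your proof is correct and follows the same spirit as the paper, which simply states this lemma as a consequence of the symplectic relations in Lemma~\ref{lemsymplecticgroup} without supplying details. Your master identity $\gamma Z-\gamma W=(cZ+d)^{-t}(Z-W)(cW+d)^{-1}$ is a clean unifying device that the paper does not make explicit; it lets you obtain both the $d\tau$ and the $(\tau-\bar\tau)$ transformations as specializations rather than as two separate computations, and the remaining steps (inversion, the split $c\bar\tau+d=(c\tau+d)-c(\tau-\bar\tau)$, and the period transformation $\Pi_A(\omega)\mapsto(c\tau+d)\Pi_A(\omega)$) match what any direct verification would do.
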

For the symplectic basis $(\omega,\eta)$,
the Riemann-Hodge bilinear relation is equivalent to
\begin{equation}\label{eqnRiemannHodgePi}
\Pi \begin{pmatrix}
0 & \mathbb{1}\\
 - \mathbb{1} & 0
\end{pmatrix} \Pi^{t}=2\pi i
\begin{pmatrix}
0 & \mathbb{1}\\
 - \mathbb{1} & 0
\end{pmatrix}\,.
\end{equation}
This gives in particular
\begin{equation}\label{eqnRiemannHodge}
\left(\Pi_{A}(\omega)^{-1}\Pi_{A}(\eta)\right)^{t}
=
\Pi_{A}(\omega)^{-1}\Pi_{A}(\eta)\,,
\quad
(\int_{A} \omega)^{t}
(\int_{B} \eta)
-
(\int_{B} \omega)^{t}
(\int_{A} \eta)=2\pi i \,.
\end{equation}
By the convention in \eqref{eqnactiononcycles} and the relation in
\eqref{eqnRiemannHodge},
an easy computation gives the following transformation law for the quasi-periods
$(\int_{A}\omega)^{-1} (\int_{A}\eta)$.
\begin{lem}\label{lemquasiperiodstransformation}
Let $\gamma=\begin{pmatrix}
a &b \\
c & d
\end{pmatrix}\in \mathrm{Sp}_{4}(\R)$. Then
under the action by $\gamma$, one has
\begin{equation}
(\int_{A}\omega)^{-1} (\int_{A}\eta)
\mapsto
(\int_{A}\omega)^{-1} (\int_{A}\eta)
+(\int_{A}\omega)^{-1} (c\tau+d)^{-1} 2\pi i c (\int_{A}\omega)^{-t}\,.
\end{equation}
\end{lem}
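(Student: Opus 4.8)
The strategy is to track the full matrix $\Pi$ of periods and quasi-periods under the action of $\gamma$, read off how its $A$-cycle blocks change, and then reduce everything to re-expressing $\Pi_B(\eta)$ in terms of $\Pi_A(\omega),\Pi_A(\eta)$ and $\tau$ by means of the Riemann--Hodge bilinear relation; after that the claim drops out of a single cancellation.

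First I would use that, by the convention \eqref{eqnactiononcycles}, the symplectic basis of cycles transforms by left multiplication by $\gamma$ on the column $(B_1,B_2,A_1,A_2)^t$. Since the entries of $\Pi$ are the periods of the fixed forms $\omega,\eta$ over these cycles, this means $\Pi\mapsto\gamma\Pi$. Writing $\gamma$ in $2\times 2$ block form and expanding $\gamma\Pi$, the bottom two blocks give the transformed $A$-periods, so that $\Pi_A(\omega)\mapsto c\,\Pi_B(\omega)+d\,\Pi_A(\omega)$ and $\Pi_A(\eta)\mapsto c\,\Pi_B(\eta)+d\,\Pi_A(\eta)$. Using $\Pi_B(\omega)=\tau\,\Pi_A(\omega)$ the first of these equals $(c\tau+d)\,\Pi_A(\omega)$, with $c\tau+d$ invertible (the standard consequence of $\mathrm{Im}\,\tau>0$ and the symplectic relations recorded in Lemma \ref{lemsymplecticgroup}). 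Hence the transformed normalized quasi-period equals
\[
\Pi_A(\omega)^{-1}(c\tau+d)^{-1}\big(c\,\Pi_B(\eta)+d\,\Pi_A(\eta)\big),
\]
and everything comes down to rewriting $\Pi_B(\eta)$.

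Second, I would invoke the Riemann--Hodge bilinear relation in the form \eqref{eqnRiemannHodge}: from $\Pi_A(\omega)^t\Pi_B(\eta)-\Pi_B(\omega)^t\Pi_A(\eta)=2\pi i\,\mathbb{1}$ one solves for $\Pi_B(\eta)$, and substituting $\Pi_B(\omega)=\tau\,\Pi_A(\omega)$ together with the symmetry of $\tau$ collapses the relevant factor to $\tau$, yielding the clean identity
\[
\Pi_B(\eta)=\tau\,\Pi_A(\eta)+2\pi i\,\Pi_A(\omega)^{-t}.
\]
(The same identity can instead be extracted by expanding \eqref{eqnRiemannHodgePi} into blocks and combining with the symmetry of $\Pi_A(\omega)^{-1}\Pi_A(\eta)$; I would present whichever route keeps the transposes most transparent.)

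Finally, feeding this into the displayed expression gives $c\,\Pi_B(\eta)+d\,\Pi_A(\eta)=(c\tau+d)\,\Pi_A(\eta)+2\pi i\,c\,\Pi_A(\omega)^{-t}$, so the factor $(c\tau+d)^{-1}$ cancels against $(c\tau+d)$ in the first summand and one is left with
\[
\Pi_A(\omega)^{-1}\Pi_A(\eta)+\Pi_A(\omega)^{-1}(c\tau+d)^{-1}\,2\pi i\,c\,\Pi_A(\omega)^{-t},
\]
which is exactly the asserted transformation law (and agrees with \eqref{eqntransformationofquasiperiods} in the main text). The only genuinely delicate point is the bookkeeping of conventions: one must be careful about treating cohomology classes as column vectors and homology classes as row vectors (Remark \ref{remconventionbytranspose}), about which transposes appear when \eqref{eqnRiemannHodgePi} or \eqref{eqnRiemannHodge} is unpacked block-wise, and about using the $\mathrm{Sp}_{4}$ identities of Lemma \ref{lemsymplecticgroup} consistently; the algebra itself is short and poses no real obstruction.
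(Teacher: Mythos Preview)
Your proposal is correct and follows precisely the approach the paper indicates: the paper merely states that ``by the convention in \eqref{eqnactiononcycles} and the relation in \eqref{eqnRiemannHodge}, an easy computation'' yields the transformation law, and refers to \cite{Grant:1985theta, Buchstaber:1997} for details. You have supplied exactly that easy computation---tracking $\Pi\mapsto\gamma\Pi$, reading off the $A$-cycle blocks, and using the second Riemann--Hodge relation to express $\Pi_B(\eta)$ in terms of $\tau,\Pi_A(\eta),\Pi_A(\omega)$---so there is nothing to add.
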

Further details for the computations can be found in \cite{Grant:1985theta, Buchstaber:1997}.
The above tells that
the obstruction of the modularity of $(\int_{A}\omega)^{-1} (\int_{A}\eta)$ given above is
cohomological: it is due to the second Riemann-Hodge bilinear relation in \eqref{eqnRiemannHodge}.
This observation has recently found interesting applications in two dimensional conformal field theories on  Riemann surfaces \cite{Li:2021}.
\\

Combining Lemma \ref{lemtautransformation} and Lemma \ref{lemquasiperiodstransformation}, one obtains
\begin{lem}\label{lemalmostmodular}
Let $\gamma=\begin{pmatrix}
a &b \\
c & d
\end{pmatrix}\in \mathrm{Sp}_{4}(\R)$, then
\begin{equation*}
(\int_{A}\omega)^{-1} (\int_{A}\eta)+ 2\pi i (\int_{A} \omega )^{-1}(\tau-\overline{\tau})^{-1}(\int_{A} \omega )^{-t}
\end{equation*}
is modular invariant.
\end{lem}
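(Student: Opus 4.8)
The plan is to combine the two transformation lemmas that precede the statement, namely Lemma \ref{lemtautransformation} (for the behavior of $(\tau-\bar\tau)^{-1}$ under $\gamma$, pulled back by $\Pi_A(\omega)^{-1}$ on the left and $\Pi_A(\omega)^{-t}$ on the right) and Lemma \ref{lemquasiperiodstransformation} (for the quasi-period $\Pi_A(\omega)^{-1}\Pi_A(\eta)$). Both lemmas exhibit the relevant quantities as transforming by a shift by an explicit cocycle term; the content of Lemma \ref{lemalmostmodular} is that these two cocycle terms are negatives of each other, so that the sum is $\gamma$-invariant.

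Concretely, first I would recall from Lemma \ref{lemquasiperiodstransformation} that under $\gamma=\begin{pmatrix} a & b\\ c& d\end{pmatrix}\in\mathrm{Sp}_4(\R)$,
\begin{equation*}
\Pi_A(\omega)^{-1}\Pi_A(\eta)\ \longmapsto\ \Pi_A(\omega)^{-1}\Pi_A(\eta)+\Pi_A(\omega)^{-1}(c\tau+d)^{-1}\,2\pi i\, c\,\Pi_A(\omega)^{-t}\,.
\end{equation*}
Next, from the last displayed identity in Lemma \ref{lemtautransformation} one has
\begin{equation*}
\Pi_A(\omega)^{-1}(\tau-\bar\tau)^{-1}\Pi_A(\omega)^{-t}\ \longmapsto\ \Pi_A(\omega)^{-1}(\tau-\bar\tau)^{-1}\Pi_A(\omega)^{-t}-\Pi_A(\omega)^{-1}(c\tau+d)^{-1}c\,\Pi_A(\omega)^{-t}\,,
\end{equation*}
where I am using that the period matrix itself transforms by $\Pi_A(\omega)\mapsto (c\tau+d)\Pi_A(\omega)$ (so that $\Pi_{A^*}(\omega)^{-1}=\Pi_A(\omega)^{-1}(c\tau+d)^{-1}$, consistent with the convention fixed in Appendix \ref{appendixquasiperiods}), and that this is exactly what converts the $\gamma$-transform of $(\gamma\tau-\overline{\gamma\tau})^{-1}$ recorded in Lemma \ref{lemtautransformation} into the stated shift. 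Multiplying the second display by $2\pi i$ and adding it to the first, the two occurrences of $\Pi_A(\omega)^{-1}(c\tau+d)^{-1}(2\pi i\,c)\Pi_A(\omega)^{-t}$ cancel, leaving
\begin{equation*}
\Pi_A(\omega)^{-1}\Pi_A(\eta)+2\pi i\,\Pi_A(\omega)^{-1}(\tau-\bar\tau)^{-1}\Pi_A(\omega)^{-t}
\end{equation*}
invariant, which is the claim.

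The only genuine subtlety — and the step I would be most careful with — is bookkeeping of conventions: the transpose conventions for the action of $\mathrm{Sp}_4$ on the marking versus on $\tau$ (Remark \ref{remconventionbytranspose}), the placement of $(c\tau+d)$ versus $(c\tau+d)^t$, and the precise automorphy factor for $\Pi_A(\omega)$ itself. Since Lemma \ref{lemtautransformation} and Lemma \ref{lemquasiperiodstransformation} are both already phrased in the same convention (the one of Appendix \ref{appendixquasiperiods}, with homologies as row vectors transforming by left multiplication), the matching is forced, but I would double-check that the cocycle in the $(\tau-\bar\tau)^{-1}$ transformation really appears with coefficient $1$ (not $2\pi i$) before multiplying through, so that after the $2\pi i$ normalization the cancellation is exact. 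No further input is needed; the result is a two-line consequence of the preceding lemmas once the conventions are lined up.
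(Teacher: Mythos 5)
Your proposal is correct and is essentially the paper's own argument: the paper proves Lemma \ref{lemalmostmodular} precisely by combining the shift in Lemma \ref{lemquasiperiodstransformation} with the last display of Lemma \ref{lemtautransformation}, observing that the two cocycle terms $\pm\,\Pi_A(\omega)^{-1}(c\tau+d)^{-1}\,2\pi i\,c\,\Pi_A(\omega)^{-t}$ cancel. Your convention check, including $\Pi_{A^*}(\omega)=(c\tau+d)\Pi_A(\omega)$, matches the setup of Appendix \ref{appendixquasiperiods}, so nothing further is needed.
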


Due to the first relation in the Riemann-Hodge bilinear relations \eqref{eqnRiemannHodge},
the above relation in Lemma \ref{lemalmostmodular} remains under the action by transpose.

\subsection{Representation-theoretic perspective of Hodge bundles}\label{appendix:reps}

We now explain the representation-theoretic perspective of the vector bundles $\underline{\omega},\Omega_{S}^{1} ,\mathcal{V}_{S}$ from Hodge theory,
where $\underline{\omega}$ is the Hodge bundle, $\Omega_{S}^{1}$ the cotangent bundle and
$\mathcal{V}_{S}$ the flat bundle corresponding to the local system of the 1st cohomology.
We shall use the convention mentioned in Section \ref{appendixquasiperiods}

Assuming $S=\Gamma\backslash \mathcal{H}$ for some $\Gamma<\Gamma(1)$.
We now lift
any vector bundle on $S$ to the corresponding trivial vector bundle on the universal cover $\mathcal{H}$ of $S$.
Take the frame for the lift $ \widetilde{\mathcal{V}_{S}}$ of $\mathcal{V}_{S}$  to be $\varepsilon=(\beta,\alpha)$.
A section $s$ of $\underline{\omega}\rightarrow S$ is described by a $\Gamma$-equivariant section $\tilde{s}$ of the lifted bundle $\widetilde{\underline{\omega}}$ over $\mathcal{H}$.
The lifted bundle has a local trivialization given by $e(\tau)=\varepsilon (\tau, \mathbb{1})^t$ transforming according to
\begin{equation}
e (\gamma\tau)=e(\tau)(c\tau+d)^{-1}\,.
\end{equation}
Let $\tilde{s}=e\tilde{v}$, then the coordinate  $v$ with respect to the trivialization transforms as
\begin{equation}
\tilde{v} (\gamma\tau)=  (c\tau+d) \tilde{v}(\tau)\,.
\end{equation}
The bundle $\mathrm{Sym}^{\otimes 2}\underline{\omega}$
can be conveniently described by using the local frame corresponding to the entries of $e^t\otimes e$.
The action by $\Gamma$ on this local frame is
\begin{equation}
e^t\otimes  e\mapsto
(c\tau+d)^{t,-1}
e^t\otimes e
(c\tau+d)^{-1}\,.
\end{equation}
Comparing with Lemma
\ref{lemtautransformation}, we obtain the
 Kodaira-Spencer map
$\Omega_{S}^{1}\rightarrow \mathrm{Sym}^{\otimes 2}\underline{\omega}$ by examining the representations
that define the corresponding vector bundles.
For universal families,
the above map gives an isomorphism
$\Omega_{S}^{1}\cong \mathrm{Sym}^{\otimes 2}\underline{\omega}$.
In particular, from the discussion above, one can see that the Eisenstein series $E_{2k}, k\geq 2$ are holomorphic sections of $(\det \underline{\omega})^{\otimes 2k}$.\\

We can further use the homogeneous space structure on $\mathcal{H}=\mathrm{Sp}_{4}(\R)/\mathbb{U}_{2}$.
Recall that a homogeneous vector bundle $V_{\rho}$ on $\mathcal{H}$ associated to a representation
$\rho$ of $\mathbb{U}_{2}$ is defined to be
$\mathrm{Sp}_{4}(\R)\times_{\rho}V$, where the identification is
$(g, v)\sim (gu, \rho(u^{-1}) v), \forall g\in \mathrm{Sp}_{4}(\R), u\in \mathbb{U}_{2}, v\in V$.
The bundles $\widetilde{\underline{\omega}}, \widetilde{\mathcal{V}_{S}}$
are determine by the fundamental representation and trivial representation, respectively.
The Baily-Borel embedding gives an embedding from the period domain
$D=\mathcal{H}$ to its compact dual $\check{D}$ which is a flag variety.
The homogeneous vector bundles $\widetilde{\underline{\omega}}, \widetilde{\mathcal{V}_{S}}$
are the pull-backs of the corresponding tautological bundles on  $\check{D}$.
\\

The benefit of the above discussions is that
all of the relevant vector-valued representations in the usual definition of Siegel modular forms
become geometric. The representation-theoretic perspective allows to use machinery such as Hodge theory and Hermitian geometry to
study modular forms.

\end{appendix}

\providecommand{\bysame}{\leavevmode\hbox to3em{\hrulefill}\thinspace}
\providecommand{\MR}{\relax\ifhmode\unskip\space\fi MR }
\providecommand{\MRhref}[2]{%
  \href{http://www.ams.org/mathscinet-getitem?mr=#1}{#2}
}
\providecommand{\href}[2]{#2}

\bigskip{}

\noindent{\small Institute of Advanced Study for Mathematics, Zhejiang University, Hangzhou, P. R. China}

\noindent{\small  Email: \tt yongbin.ruan@yahoo.com}

\medskip{}

\noindent{\small Department of Mathematics, University of Michigan, 2074 East Hall, 530 Church Street,
	Ann Arbor, MI 48109, USA}

\noindent{\small Email: \tt zyingchu@umich.edu}

\medskip{}

\noindent{\small Yau Mathematical Sciences Center, Tsinghua University, Beijing 100084, P. R. China}

\noindent{\small Email: \tt jzhou2018@mail.tsinghua.edu.cn}

\end{document}